\newtheorem{theorem}{Theorem}[section]
\newtheorem{lemma}[theorem]{Lemma}
\theoremstyle{definition}
\newtheorem{definition}[theorem]{Definition}
\newtheorem{proposition}[theorem]{Proposition}
\newtheorem{corollary}[theorem]{Corollary}
\theoremstyle{remark}
\numberwithin{equation}{section}
\newcommand{\abs}[1]{\lvert#1\rvert}
\newcommand{\Abs}[1]{\Big|#1\Big|}
\newcommand{\norm}[1]{\lVert#1\rVert}
\newcommand{\Norm}[1]{\Big\lVert#1\Big\rVert}
\newcommand{\lr}[1]{\left(#1\right)}
\newcommand{\Lr}[1]{\left[#1\right]}
\newcommand{\LR}[1]{\left\{#1\right\}}
\newcommand{\FT}[2]{\mathscr{F}_{#1}\lr{#2}}
\newcommand{\supp}{\mathbf{supp}}
\newcommand{\ext}{\mathbf{ext}}
\newcommand{\innerproduct}[2]{\langle #1, #2 \rangle}
\begin{document}
	
	\title{Paley-Wiener theorems for slice monogenic functions}
	
	\author{Yanshuai Hao}
	\address{School of Computer Science and Engineering, Faculty of Innovation Engineering, Macau University of Science and Technology}
	\curraddr{}
	\email{3220004726@student.must.edu.mo}
	\thanks{}
	
	%    author two information
	\author{Pei Dang}
	\address{Department of Engneering Science, Faculty of Innovation Engineering, Macau University of Science and Technology}
	\curraddr{}
	\email{pdang@must.edu.mo}
	\thanks{}
	
	%    author three information
	\author{Weixiong Mai$^{\star}$}
	\address{School of Computer Science and Engineering, Faculty of Innovation Engineering, Macau University of Science and Technology}
	\curraddr{}
	\email{wxmai@must.edu.mo}
	\thanks{$^{\star}$Corresponding author}
	
	%    General info
	\subjclass[2010]{Primary 30G35, 32A26, 42A38, 15A66}
	% 30G35 Functions of hypercomplex variables and generalized variables
	% 32A26 Integral representations, constructed kernels (e.g. Cauchy, Fantappi`etype kernels)
	% 42A38 Fourier and Fourier-Stieltjes transforms and other transforms of Fourier type
	% 15A66 Clifford algebras, spinors
	
	\date{}
	
	\dedicatory{}
	
	\keywords{Paley-Wiener theorem, Slice monogenic function, One-dimensional Clifford Fourier transform, Slice monogenic Bergman space}
	
	\begin{abstract}
		In this paper, we prove some Paley-Wiener theorems for function spaces consisting of  slice monogenic functions such as Paley-Wiener, Hardy and Bergman spaces. As applications, we can compute the reproducing kernel functions for the related function spaces. 
	\end{abstract}
	
	\maketitle
	\section{Introduction}
	
	The Paley-Wiener theorem plays an important role in complex analysis. It shows the connection between  holomorphic functions $f$ in  certain function spaces and the Fourier transform of $f$. The classical Paley-Wiener theorem has two different forms. One is in regard to the Hardy space on the upper half-plane $H^2(\mathbb C_+)$, and the other is corresponding to a function space consisting of entire functions of exponential type called the Paley-Wiener space. The former one says that if $f\in L^{2}(\mathbb{R})$, then $\mathscr{F}\left(f\right)$ is supported on $[0,\infty)$ if and only if $f$ is the non-tangential boundary limit (NTBL) of some function in $H^{2}(\mathbb{C}_{+})$, where $\mathscr{F}\left(f\right)$ denotes the Fourier transform of $f$. The other one states that for given positive constants $B$ and $C$,  and $f$ an entire function satisfying $f|_\mathbb R \in L^2\left(\mathbb{R}\right)$, there holds that the support of $\mathscr{F}\left(f|_{\mathbb{R}}\right)$ lies on $\Lr{-B,B}$ if and only if $\abs{f\left(z\right)}\leq Ce^{B|z|}$, where $f|_{\mathbb{R}}$ is the restriction of $f$ on $\mathbb{R}$ (\cite{paley1934fourier}). For convenience, in this paper we refer to the two forms as the non-compact type and the compact type Paley-Wiener theorems, respectively.
	
	Since the classical Paley-Wiener theorem was proposed in 1934, a lot of attention has been paid to Paley-Wiener theorem in different settings. For instance, the Paley-Wiener type theorems have been extensively studied including generalizations in the $L^p$-sense,  distribution sense, analogous results for Paley-Wiener spaces, Hardy spaces, Bergman spaces and Dirichlet spaces in one and several variables (see e.g. \cite{GENCHEV1986496,bernstein1998paley,duren2007paley,qian2005characterization,gilbert1991clifford,qian2009fourier,li2018fourier,dang2020fourier,os2001generalized,hormander2015analysis,schwartz_transformation_1952}). In this paper we are mainly concerned with the compact and non-compact type Paley-Wiener theorems, so in the following we give the main developments of those two types of Paley-Wiener theorem. In \cite{qian2005characterization,qian2009fourier}, the authors generalize the classical non-compact type Paley-Wiener theorem to the case of $1\leq p\leq\infty$. Specifically, it is shown that for $f\in L^{p}(\mathbb{R})$, $1\leq p\leq\infty$,  $\lr{\mathscr{F}\lr{f},\varphi}=0$  if and only if $f$ is the NTBL of some function in $H^{p}\lr{\mathbb{C}_{+}}$, where $\varphi$ belongs to Schwarz space $\mathscr{S}$ with support lying on $(-\infty,0]$. In \cite{boas1938representations}, the authors generalize of the classical compact type Paley-Wiener theorem to the case of $1<p\leq2$. It is stated that if an entire function $f$ satisfies $\abs{f\lr{z}}<Ce^{B\abs{z}}$ and $f|_{\mathbb{R}}\in L^{p}(\mathbb{R})$ for $1<p\leq2$, then $f\lr{z}=\int_{-B}^{B}e^{izt}\mathscr{F}\lr{f|_{\mathbb{R}}}\lr{t}\mathrm{d}t$ where $\mathscr{F}\lr{f|_{\mathbb{R}}}\in L^{\frac{p}{p-1}}\lr{-B,B}$. Conversely, if $f\lr{z}=\int_{-B}^{B}e^{izt}\mathscr{F}\lr{f|_{\mathbb{R}}}\lr{t}\mathrm{d}t$ with $\mathscr{F}\lr{f|_{\mathbb{R}}}\in L^{p}\lr{-B,B}$, then $f$ is an entire function satisfying $\abs{f\lr{z}}<Ce^{B\abs{z}}$ and $f|_{\mathbb{R}}\in L^{\frac{p}{p-1}}(\mathbb{R})$. Besides, the Paley-Wiener theorem is generalized to several variables in the settings of several complex variables and Clifford algebra, respectively. In the several complex variables setting, the non-compact type Paley-Wiener theorem gives a Fourier spectrum characterization of Hardy $H^2$ space on tubes over regular cone (see e.g. \cite{stein1971introduction}). Moreover, the result is,  respectively,  generalized to Hardy $H^p$  spaces on tubes over regular cone with $1\leq p\leq \infty$ (\cite{li2018fourier}) and $0<p<1$ (\cite{deng_fourier_2019}). The compact type case in the setting of several complex variables is given in e.g. \cite{stein1971introduction}. In the Clifford algebra setting, the non-compact type Paley-Wiener theorem is respectively studied in \cite{bernstein1998paley} and \cite{gilbert1991clifford} for $p=2$, and in \cite{dang2020fourier} a systematical investigation of Paley-Wiener theorem for $1\leq p\leq\infty$ is given. Note that the compact type Paley-Wiener theorem in the Clifford algebra setting is proved in \cite{kou2002paley} by a substantial method.
	
	It is noted that slice monogenic functions were first introduced in \cite{Colombo2009Slice}, which is a generalization of slice regular functions of a quaternionic variable initiated by Gentili and Struppa \cite{gentili2006new}.  It generalizes the theory of holomorphic functions of one complex variable to higher dimensional spaces, which is different from the holomorphic function theory of several complex variables and monogenic function theory of Clifford algebras. Over the past two decades, slice analysis including the cases of quaternion and Clifford algebras has been widely studied.  It has been significantly developed in many disciplines including geometric function theory, operator theory, differential geometry, etc. (see \cite{colombo2011cauchy,colombo2014some,colombo2022poisson,colombo2021slice,colombo2013nonconstant,ren2017growth,colombo2014distributions,jin2023adaptive,ren2017julia,colombo2011runge,dou2022extension}).  
	
	The classical Paley-Wiener theorem plays a role in the theory of holomorphic functions of one complex variable.  Therefore, it would be interesting and significant to investigate the analogous results of Paley-Wiener theorem for slice monogenic functions. In fact, in the setting of quaternion analysis, we proved the Paley-Wiener theorems for Hardy space and Paley-Wiener space consisting of slice regular functions (\cite{Hao-Dang-Mai}). This paper could be considered as a continuous study of \cite{Hao-Dang-Mai}. 
	In this paper, we will prove, respectively, the non-compact type and compact type of the Paley-Wiener theorem for slice monogenic functions.  In the non-compact type case, we will obtain the related results for  $1\leq p\leq 2.$ In the compact case, we only obtain the related results for $p=2.$ To do so, we need to introduce the one-dimensional left-sided Clifford Fourier transform (1DLCFT) and the Hardy and Bergman spaces consisting of slice monogenic functions. Moreover, we can exactly compute the formulas of related reproducing kernel functions by the obtained Paley-Wiener type theorems. In fact, it is easy to prove the Paley-Wiener type theorems for slice monogenic functions in a particular slice since the function theory in such case is very similar to the holomorphic function theory of one complex variable. However, the main difficulty is to generalize the related results to the whole space.
	
	The paper is organized as follows. In \S 2 we introduce some notations, definitions and properties for Clifford algebra and slice monogenic functions. In \S 3 we prove three Paley-Wiener theorem for slice monogenic Paley-Wiener, Hardy and Bergman spaces, respectively. 
	
	\section{Preliminaries}
	
	In this section we introduce some notations and fundamental properties of Clifford algebras and slice monogenic functions. For more information, please refer to \cite{brackx1982clifford,Colombo2009Slice,delanghe2012clifford,sabadini2011noncommutative}.
	
	Let $\mathbb{R}_n$ be the real Clifford algebra over $n$ imaginary units $\mathbf{e}_1,\dots,\mathbf{e}_n$ satisfying $\mathbf{e}_{r}\mathbf{e}_{s}+\mathbf{e}_{s}\mathbf{e}_{r}=-2\delta_{rs}$, where $\delta_{rs}$ is the Kronecker delta. Denote the ordered powerset of $X$ by
	$$\mathcal{P}_{<}\lr{X}:=\Big\{\LR{i_1,\dots,i_s}\in\mathcal{P}\lr{X}\setminus\LR{\varnothing}:i_1<\dots<i_s\Big\}\cup\LR{\varnothing}$$
	where $\mathcal{P}\lr{X}$ is the powerset of $X$. An element in $\mathbb{R}_{n}$, called a Clifford number, can be written as $a=\sum_{A}a_{A}\mathbf{e}_{A}$, where $A\in\mathcal{P}_{<}\lr{\LR{1,\dots,n}}$ and 
	\begin{equation*}
		\mathbf{e}_{A}=
		\begin{cases}
			1,  &A=\varnothing,\\
			\mathbf{e}_{i_1}\mathbf{e}_{i_2}\dots \mathbf{e}_{i_s}, &A=\LR{i_1,\dots,i_s}.
		\end{cases}
	\end{equation*}
	Let 
	\[\mathbb{R}_{n}^{1}=\left\{\underline{x}=x_1\mathbf{e}_1+\dots+x_n\mathbf{e}_n:x_i\in\mathbb{R},i=1,2,\dots,n\right\}\]
	be identical with the usual Euclidean space $\mathbb{R}^{n}$, and 
	\[\mathbb{R}^{n+1}=\left\{\mathbf{x}=x_0+\underline{x}:x_0\in\mathbb{R},\underline{x}\in\mathbb{R}_{n}^{1}\right\}.\]
	An element in $\mathbb{R}^{n+1}$ is a paravector. The conjugation of $\textbf{x}\in\mathbb{R}^{n+1}$ is given by
	\[\overline{\textbf{x}}=x_0-x_1\textbf{e}_1-\dots x_n\textbf{e}_n=x_0-\underline{x}\]
	and the norm of $\textbf{x}$ is
	\[\abs{\textbf{x}}=\sqrt{x_0^2+x_1^2+\dots+x_n^2}=\sqrt{x_0^2+\abs{\underline{x}}^2}.\]
	The inverse of $\textbf{x}$ is defined by
	\[\textbf{x}^{-1}=\frac{\overline{\textbf{x}}}{\abs{\textbf{x}}^{2}}.\]
	We denote by $\mathbb{S}$ the sphere
	\[\mathbb{S}=\left\{\underline{x}=x_1\mathbf{e}_1+\dots+x_n\mathbf{e}_n\in\mathbb{R}^{n+1}:x_1^2+\dots+x_n^2=1\right\}.\]
	Note that for $I\in\mathbb{S}$, we obviously have $I^2=-1$. The vector space $\mathbb{R}+I\mathbb{R}$ passing through $1$ and $I$ will be denoted by $L_{I}$. In fact, it is isomorphic to the complex plane. An element $\mathbf{x}\in L_{I}$ will be denoted by $u+Iv$, for $u,v\in\mathbb{R}$. To any non real paravector $\textbf{x}\in\mathbb{R}^{n+1}$, we can uniquely associate the element in the sphere $\mathbb{S}$ given by $I_{\textbf{x}}:=\frac{\underline{x}}{\abs{\underline{x}}}$, and write $\textbf{x}=u+I_{\textbf{x}}v$, where $u=x_0$, $v=\abs{x}$. 
	
	Given an element $\mathbf{x}=x_0+\underline{x}\in\mathbb{R}^{n+1}$, we define
	\[\Lr{\mathbf{x}}:=\LR{\mathbf{y}\in\mathbb{R}^{n+1}:\mathbf{y}=x_0+I\abs{\underline{x}}, I\in\mathbb{S}}.\]
	The set $\Lr{\mathbf{x}}$ is a $\lr{n-1}$-dimensional sphere in $\mathbb{R}^{n+1}$.
	%Definition
	\begin{definition}[\cite{Colombo2009Slice}]
		Let $U\subseteq\mathbb{R}^{n+1}$ be an open set and let $f:U\to\mathbb{R}_n$ be a real differentiable function. Let $I\in\mathbb{S}$ and let $f_I$ be the restriction of $f$ to the complex plane $L_{I}=\mathbb{R}+I\mathbb{R}$. Denote by $u+Iv$ an element on $L_{I}$. We say that $f$ is a left slice monogenic function if, for every $I\in\mathbb{S}$, we have 
		\[\overline{\partial_{I}}f\left(u+Iv\right):=\frac{1}{2}\left(\frac{\partial}{\partial u}+I\frac{\partial}{\partial v}\right)f_{I}\left(u+Iv\right)=0\]
		on $U\cap L_{I}$. Denote by $\mathcal{SM}_{l}\left(U\right)$ the set of left slice monogenic functions on the open set $U$. Similarly, a function is said to be right slice monogenic in $U$ if for every $I\in\mathbb{S}$, 
		\[\frac{1}{2}\left(\frac{\partial}{\partial u}f_{I}\left(u+Iv\right)+\frac{\partial}{\partial v}f_{I}\left(u+Iv\right)I\right)=0\]
		on $U\cap L_{I}$.
	\end{definition}
	In this paper, we only discuss the case for left slice monogenic functions. Note that for the case of right slice monogenic functions we can have similar results.
	\begin{definition}[\cite{Colombo2009Slice}]
		Let $U\subseteq\mathbb{R}^{n+1}$ be a domain. We say that $U$ is a slice domain (s-domain for short) if $U\cap\mathbb{R}$ is nonempty and if $U\cap L_{I}$ is a domain in $L_{I}$ for all $I\in\mathbb{S}$.
	\end{definition}
	\begin{definition} [\cite{Colombo2009Slice}]
		Let $U\subseteq\mathbb{R}^{n+1}$. We say that $U$ is axially symmetric if, for all $\mathbf{x}\in U$, the whole $(n-1)$-sphere $[\mathbf{x}]$ is contained in $U$.
	\end{definition}
	\begin{definition}[\cite{MR4752422}]
		Let $f:\mathbb{R}\to\mathbb{R}_{n}$ be a Clifford-valued function with $f\in L^{1}(\mathbb{R})$ and let $I\in\mathbb{S}$. Then the one-dimensional left-sided Clifford Fourier transform (1DLCFT) $\mathscr{F}_I(f)$ of $f$ is 
		\begin{equation*}
			\mathscr{F}_I(f)(w):=\frac{1}{\sqrt{2\pi}}\int_{\mathbb{R}}e^{-Iuw}f(u)\mathrm{d}u.
		\end{equation*}
	\end{definition}
	Some fundamental properties of the 1DLCFT are given as follows (see, \cite{MR4752422}).
	\begin{itemize}
		%\item Let $a,b\in\mathbb{R}_{n}$ and $f,g\in L^{1}(\mathbb{R})$. Then
		%\begin{equation*}
		%    \FT{I}{fa+gb}\lr{w}=\FT{I}{f}\lr{w}a+\FT{I}{g}\lr{w}b.
		%\end{equation*}
		%\item Let $f\in L^{1}(\mathbb{R})$ and $h\in\mathbb{R}$. Then
		%\begin{equation*}
		%    \FT{I}{f\lr{u+h}}\lr{w}=e^{Iwh}\FT{I}{f}\lr{w},
		%\end{equation*}
		%and
		%\begin{equation*}
		%    \FT{I}{f}\lr{w+h}=\FT{I}{e^{-Iuh}f\lr{u}}\lr{w}.
		%\end{equation*}
		\item (Inverse Fourier transform) Let $f,\FT{I}{f}\in L^{1}(\mathbb{R})$. Then the inverse Fourier transform is defined by
		\begin{equation*}
			f\lr{u}=\mathscr{F}_{I}^{-1}\lr{\FT{I}{f}}\lr{u}=\frac{1}{\sqrt{2\pi}}\int_{\mathbb{R}}e^{Iuw}\FT{I}{f}(w)\mathrm{d}w.
		\end{equation*}
		\item (Plancherel's theorem) Let $f\in L^{2}(\mathbb{R})$. Then $$\lVert\FT{I}{f}\rVert_{L^{2}(\mathbb{R})}=\lVert f\rVert_{L^{2}(\mathbb{R})}.$$
		\item (Hausdorff-Young inequality) Let $f\in L^{p}(\mathbb{R})$, $1\leq p\leq2$. Then $$\lVert\FT{I}{f}\rVert_{L^{q}(\mathbb{R})}\leq\lVert f\rVert_{L^{p}(\mathbb{R})}$$
		where $\frac{1}{p}+\frac{1}{q}=1$.
	\end{itemize} 
	
	%Propositin
	\begin{proposition}[\cite{Colombo2009Slice}]\label{prop-I}
		Let $I=I_1\in\mathbb{S}$. It is possible to choose $I_2,\dots,I_n\in\mathbb{S}$ such that $I_1,\dots,I_n$ form an orthonormal basis for the Clifford algebra $\mathbb{R}_n$ i.e., they satisfy the defining relations $I_rI_s+I_sI_r=-2\delta_{rs}$.
	\end{proposition}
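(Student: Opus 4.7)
The plan is to reduce the proposition to a purely linear-algebraic statement about $\mathbb{R}_{n}^{1}\cong\mathbb{R}^{n}$. First, I would compute the symmetric part of the Clifford product for two arbitrary paravectors $I,J\in\mathbb{R}_{n}^{1}$. Writing $I=\sum_{r}a_{r}\mathbf{e}_{r}$ and $J=\sum_{s}b_{s}\mathbf{e}_{s}$ with $a_{r},b_{s}\in\mathbb{R}$, and using the defining relation $\mathbf{e}_{r}\mathbf{e}_{s}+\mathbf{e}_{s}\mathbf{e}_{r}=-2\delta_{rs}$, a direct computation gives
\[
IJ+JI=\sum_{r,s}a_{r}b_{s}\bigl(\mathbf{e}_{r}\mathbf{e}_{s}+\mathbf{e}_{s}\mathbf{e}_{r}\bigr)=-2\sum_{r}a_{r}b_{r}=-2\langle I,J\rangle,
\]
where $\langle\cdot,\cdot\rangle$ denotes the standard Euclidean inner product on $\mathbb{R}^{n}$. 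In particular, the Clifford relations $I_{r}I_{s}+I_{s}I_{r}=-2\delta_{rs}$ for a family $\{I_{r}\}\subset\mathbb{R}_{n}^{1}$ are equivalent to the assertion that $\{I_{r}\}$ is orthonormal in $\mathbb{R}^{n}$ in the usual Euclidean sense.

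Given this equivalence, the proposition reduces to: starting from a unit vector $I_{1}\in\mathbb{S}$ (that is, with $\langle I_{1},I_{1}\rangle=1$), extend it to an orthonormal basis $I_{1},\dots,I_{n}$ of $\mathbb{R}_{n}^{1}\cong\mathbb{R}^{n}$. This is standard linear algebra: by applying Gram--Schmidt to $I_{1}$ together with any $n-1$ vectors that span a complement of $\mathbb{R}I_{1}$, one produces an orthonormal completion, and every resulting $I_{r}$ is automatically a unit vector in $\mathbb{R}_{n}^{1}$, hence lies in $\mathbb{S}$.

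Combining the two steps, the orthonormality of $\{I_{1},\dots,I_{n}\}$ in $\mathbb{R}^{n}$ translates, via the symmetric-part identity, into the Clifford relations $I_{r}I_{s}+I_{s}I_{r}=-2\delta_{rs}$, which proves the proposition. There is no real obstacle here: the only point requiring actual verification is the identity linking the symmetric Clifford product on $\mathbb{R}_{n}^{1}$ with the Euclidean inner product, and this follows from the one-line calculation above. The remainder is classical linear algebra, so the entire argument is essentially a bookkeeping statement between two equivalent characterizations of orthonormality on $\mathbb{R}_{n}^{1}$.
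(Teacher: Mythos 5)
Your proof is correct, and it is the standard argument. Note that the paper itself does not supply a proof of Proposition~\ref{prop-I}; it is cited directly from \cite{Colombo2009Slice}. So there is no in-paper argument to compare against, but the route you take -- observing that the anticommutator identity
\[
IJ+JI=-2\langle I,J\rangle
\]
on $\mathbb{R}_{n}^{1}$ translates the Clifford relations $I_rI_s+I_sI_r=-2\delta_{rs}$ into ordinary Euclidean orthonormality, and then extending the unit vector $I_1$ to an orthonormal basis of $\mathbb{R}^n\cong\mathbb{R}_n^1$ by Gram--Schmidt -- is exactly the argument that is intended and is the one used in the literature.

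One small terminological correction: you refer to elements of $\mathbb{R}_{n}^{1}$ as \emph{paravectors}, but in the paper's conventions a paravector is an element of $\mathbb{R}^{n+1}=\mathbb{R}\oplus\mathbb{R}_n^1$, i.e., it has a scalar part. The elements of $\mathbb{R}_n^1$ you work with (and elements of $\mathbb{S}$ in particular) are $1$-vectors with zero scalar part. Your expansion $I=\sum_r a_r\mathbf{e}_r$ makes this implicit, so the computation is unaffected; it is worth using the right word, though, since the identity $IJ+JI=-2\langle I,J\rangle$ is specific to $1$-vectors and would acquire an extra $2x_0y_0$ term if genuine paravectors were allowed.
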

	\begin{proposition}[\cite{Colombo2009Slice}, Splitting Lemma]\label{prop-SL}
		Let $U\subseteq\mathbb{R}^{n+1}$ be an open set. Let $f:U\to\mathbb{R}_{n}$ be a left slice monogenic function. For every $I=I_1\in\mathbb{S}$, let $I_2,\dots,I_n$ be a completion to a basis of $\mathbb{R}_n$ satisfying the defining relations $I_rI_s+I_sI_r=-2\delta_{rs}$. Then there exist $2^{n-1}$ holomorphic functions $F_{A'}:U\cap L_{I}\to L_{I}$ such that for every $z=u+Iv$,
		\[f_I\left(z\right)=\sum_{A'}F_{A'}\left(z\right)I_{A'},\]
		where $A'\in\mathcal{P}_{<}\lr{\LR{2,\dots,n}}$ and 
		\begin{equation*}
			I_{A'}=
			\begin{cases}
				1,  &A'=\varnothing,\\
				I_{i_1}I_{i_2}\dots I_{i_s}, &A'=\LR{i_1,\dots,i_s}.
			\end{cases}
		\end{equation*}
	\end{proposition}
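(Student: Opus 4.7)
The plan is to leverage the orthonormal basis $I_1,\dots,I_n$ supplied by Proposition~\ref{prop-I} in order to resolve Clifford numbers into pieces valued in the commutative slice $L_I=\mathbb{R}+I\mathbb{R}$ (where $I=I_1$), apply this resolution pointwise to $f_I$, and then transfer the slice monogenic equation for $f$ into the Cauchy--Riemann equation for each component.

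First I would record the following linear-algebraic decomposition. Since $I_1,\dots,I_n$ satisfy the Clifford relations, the ordered products $\{I_B:B\in\mathcal{P}_{<}(\{1,\dots,n\})\}$ form an $\mathbb{R}$-basis of $\mathbb{R}_n$. Grouping these basis elements according to whether $B$ contains the index $1$, every $a\in\mathbb{R}_n$ admits a unique expansion
\[a=\sum_{A'\in\mathcal{P}_{<}(\{2,\dots,n\})}(\alpha_{A'}+I_1\beta_{A'})\,I_{A'},\qquad \alpha_{A'},\beta_{A'}\in\mathbb{R},\]
that is, $a=\sum_{A'}c_{A'}I_{A'}$ with $c_{A'}\in L_I$ uniquely determined by $a$. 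Applying this pointwise to $f_I(u+Iv)$ yields $f_I(z)=\sum_{A'}F_{A'}(z)\,I_{A'}$ for real differentiable functions $F_{A'}:U\cap L_I\to L_I$, the regularity being inherited from that of $f$.

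Next I would check holomorphy. Because each $F_{A'}$ is $L_I$-valued and $L_I$ is commutative, one has $I\,\partial_v F_{A'}=(\partial_v F_{A'})\,I$, so
\[\overline{\partial_I}(F_{A'}I_{A'})=\frac{1}{2}\lr{\partial_u F_{A'}+I\,\partial_v F_{A'}}I_{A'}=(\overline{\partial_I}F_{A'})\,I_{A'},\]
with $\overline{\partial_I}F_{A'}\in L_I$. Summing over $A'$ and invoking the hypothesis $\overline{\partial_I}f_I=0$ gives $\sum_{A'}(\overline{\partial_I}F_{A'})\,I_{A'}=0$, and the uniqueness of the $L_I$-coefficient decomposition forces $\overline{\partial_I}F_{A'}=0$ for every $A'$. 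Identifying $L_I$ with $\mathbb{C}$ via $u+Iv\leftrightarrow u+iv$, this is precisely the Cauchy--Riemann equation for $F_{A'}$ on $U\cap L_I$.

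The step requiring care is not analytic but notational: one must insist that the scalar coefficients lie in the commutative subalgebra $L_I$ (so that $I$ commutes with them) and that the basis vectors $I_{A'}$ stay on the right (so they are untouched by $\overline{\partial_I}$). Once this arrangement is in place, the single slice equation $\overline{\partial_I}f_I=0$ splits cleanly into the $2^{n-1}$ independent Cauchy--Riemann equations for the $F_{A'}$, and no further input is needed.
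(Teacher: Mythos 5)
Your proof is correct, and since the paper cites this as a known result from Colombo--Sabadini--Struppa without reproducing the proof, there is no in-paper argument to compare against; your argument is the standard one found in the cited reference. The two key points — that the ordered products $I_B$, $B\in\mathcal{P}_<(\{1,\dots,n\})$, split according to whether $1\in B$, giving a unique $L_I$-coefficient expansion over the $2^{n-1}$ symbols $I_{A'}$, and that $\overline{\partial_I}$ commutes with right-multiplication by the constant $I_{A'}$ while preserving $L_I$-valuedness of the coefficient — are both handled correctly. One small remark: the invocation of commutativity of $L_I$ (to write $I\,\partial_v F_{A'}=(\partial_v F_{A'})\,I$) is actually not used in the displayed computation; what is used is only that $I\,\partial_v F_{A'}\in L_I$ (so it can be absorbed into a single $L_I$-valued coefficient sitting to the left of $I_{A'}$) together with associativity, since $I$, $\partial_v F_{A'}$, and $I_{A'}$ all already appear in the right order. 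This is a harmless redundancy, not an error.
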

	
	Since $F_{A'}$ is $L_{I}$-valued, we can define $\Re\LR{F_{A'}}=\frac{1}{2}\lr{F_{A'}+\overline{F_{A'}}}$ and $\Im\LR{F_{A'}}=\frac{I}{2}\lr{\overline{F_{A'}}-F_{A'}}$. It is obvious that $\Re\LR{F_{A'}}$ and $\Im\LR{F_{A'}}$ are real-valued. Notice that
	\begin{align*}
		f\left(u+Iv\right)=&\sum_{A'}F_{A'}\left(z\right)I_{A'}\\
		=&\sum_{A'}\lr{\Re\LR{F_{A'}\left(u+Iv\right)}+I\Im\LR{F_{A'}\left(u+Iv\right)}}I_{A'}\\
		=&\Re\LR{F_{\varnothing}\left(u+Iv\right)}\\
		&+\sum_{k=1}^{n-1}\Lr{\sum_{\abs{A'}=k}\Re\LR{F_{A'}\left(u+Iv\right)}I_{A'}+\sum_{\abs{A'}=k-1}\Im\LR{F_{A'}\left(u+Iv\right)}II_{A'}}\\
		&+\Im\LR{F_{2\dots n}\left(u+Iv\right)}II_{2\dots n}.
	\end{align*}
	Using the fact that $I=I_{1},I_{2},\dots,I_{n}$ form an orthonormal basis of $\mathbb{R}_{n}$, we have 
	\begin{equation*}
		\abs{f\left(u+Iv\right)}^{2}=\sum_{A'}\lr{\abs{\Re\LR{F_{A'}\lr{u+Iv}}}^{2}+\abs{\Im\LR{F_{A'}\lr{u+Iv}}}^{2}}=\sum_{A'}\abs{F_{A'}\left(u+Iv\right)}^{2}.
	\end{equation*}
	\begin{proposition}[\cite{colombo2009structure}, Representation Formula]\label{prop-RF}
		Let $U\subseteq\mathbb{R}^{n+1}$ be an axially symmetric s-domain and let $f$ be a left slice monogenic function on $U$.  For every vector $\mathbf{x}\in U$ the following formulas hold for every $I\in\mathbb{S}$:
		\begin{equation*}
			f\left(\mathbf{x}\right)=\frac{1-I_{\mathbf{x}}I}{2}f\left(u+Iv\right)+\frac{1+I_{\mathbf{x}}I}{2}f\left(u-Iv\right)
		\end{equation*}
		and
		\begin{equation*}
			f\left(\mathbf{x}\right)=\frac{1}{2}\left[f\left(u-Iv\right)+f\left(u+Iv\right)\right]+\frac{I_{\mathbf{x}}I}{2}\left[f\left(u-Iv\right)-f\left(u+Iv\right)\right].
		\end{equation*}
		Moreover, the two quantities
		\begin{equation*}
			\alpha\left(u,v\right):=\frac{1}{2}\left[f\left(u-Iv\right)+f\left(u+Iv\right)\right]
		\end{equation*}
		and
		\begin{equation*}
			\beta\left(u,v\right):=\frac{I}{2}\left[f\left(u-Iv\right)-f\left(u+Iv\right)\right]
		\end{equation*}
		do not depend on $I\in\mathbb{S}$.
	\end{proposition}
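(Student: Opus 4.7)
The two stated identities are algebraic rearrangements of one another—expanding $\tfrac{1-I_{\mathbf{x}}I}{2}f(u+Iv)+\tfrac{1+I_{\mathbf{x}}I}{2}f(u-Iv)$ directly produces the second form—so I only need to establish one of them. Fixing $I\in\mathbb{S}$ and a pair $(u,v)$ with $u+Iv\in U$, my plan is to introduce the candidate extension
\[
\tilde f(u+Jv):=\tfrac{1}{2}\bigl[f(u-Iv)+f(u+Iv)\bigr]+\tfrac{JI}{2}\bigl[f(u-Iv)-f(u+Iv)\bigr],\qquad J\in\mathbb{S},
\]
which is well defined on the whole sphere $[u+Iv]\subseteq U$ by axial symmetry, and then to show that $\tilde f\equiv f$ on $U$. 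Plugging $J=I$ gives $\tilde f(u+Iv)=f(u+Iv)$ at once, so $\tilde f$ agrees with $f$ on the slice $U\cap L_I$; the real work is to show that $\tilde f$ is left slice monogenic on $U$, after which an identity-theorem argument will finish the proof.

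For the slice-monogenicity check I would set $A(u,v):=f(u+Iv)$ and $B(u,v):=f(u-Iv)$: the slice Cauchy--Riemann equation for $f$ on $L_I$ gives $\partial_vA=I\,\partial_uA$, while the chain rule applied to $v\mapsto -v$ yields $\partial_vB=-I\,\partial_uB$. Fixing $J\in\mathbb{S}$ and computing $\partial_u\tilde f+J\partial_v\tilde f$ on $U\cap L_J$, substituting these two relations, and using $J(JI)=-I$ (from $J^2=-1$) while keeping every Clifford factor on the far left, the terms proportional to $\partial_uA\pm\partial_uB$ cancel in pairs, yielding $\overline{\partial_J}\tilde f=0$. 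Hence $\tilde f\in\mathcal{SM}_l(U)$. To upgrade the equality $\tilde f=f$ from $U\cap L_I$ to all of $U$, I would fix $J\in\mathbb{S}$, apply the Splitting Lemma (Proposition~\ref{prop-SL}) to $\tilde f-f$ on the connected slice $U\cap L_J$, and note that its $L_J$-valued holomorphic components vanish on the nonempty open set $U\cap L_I\cap L_J\supseteq U\cap\mathbb{R}$; the classical identity theorem then forces them to vanish on $U\cap L_J$, hence $\tilde f=f$ on $U$. Specializing $J=I_{\mathbf{x}}$ recovers both displayed formulas.

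The independence of $\alpha(u,v)$ and $\beta(u,v)$ from the choice of $I$ follows once $f(\mathbf{x})=\alpha(u,v)+I_{\mathbf{x}}\beta(u,v)$ has been established for every $I_{\mathbf{x}}\in\mathbb{S}$: testing the identity at $I_{\mathbf{x}}=K$ and $I_{\mathbf{x}}=-K$ for an arbitrary $K\in\mathbb{S}$, then adding and subtracting the resulting equations (and left-multiplying by $-K/2$ in the subtraction, using $K^{-1}=-K$) expresses $\alpha$ and $\beta$ by the very same defining formulas with $I$ replaced by $K$, so their values do not depend on $I$.

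The main obstacle I expect is the Clifford bookkeeping in the slice-monogenicity verification: the factor $JI$ in front of the derivatives must be kept on the far left throughout because it does not commute with the $\mathbb{R}_n$-valued quantities $\partial_uA,\partial_uB$, and any stray rearrangement would destroy the cancellation.
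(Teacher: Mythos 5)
The paper only cites this proposition from \cite{colombo2009structure} and does not prove it, so there is no in-house argument to compare against; I assess your proof on its own terms. Your strategy — define the candidate $\tilde f$ by the representation formula, verify $\overline{\partial_J}\tilde f=0$ by a direct Cauchy--Riemann computation, then pin down $\tilde f=f$ on each slice by an identity-theorem argument via the Splitting Lemma — is correct, and it is exactly the mechanism underlying the Extension Lemma (Proposition~\ref{prop-ext}) already quoted in the paper. Indeed, given that lemma together with uniqueness of slice monogenic extensions, the Representation Formula is a one-line consequence (restrict $f$ to $L_I$, extend by $\mathbf{ext}$, uniqueness forces $\mathbf{ext}(f_I)=f$), so you are in effect re-deriving the Extension Lemma from scratch; this is more work than strictly necessary but perfectly valid for a blind proof. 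The Clifford bookkeeping you flag as the main risk checks out: with $\partial_v A=I\partial_u A$, $\partial_v B=-I\partial_u B$, and $J(JI)=J^2I=-I$, keeping $JI$ on the far left yields $\partial_u\tilde f+J\partial_v\tilde f=0$ on $U\cap L_J$. The independence argument for $\alpha,\beta$, testing the already-established identity at $\pm K$ and using $K^{-1}=-K$, is also sound.

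One imprecision to correct: for $J\neq\pm I$ the set $U\cap L_I\cap L_J$ equals $U\cap\mathbb{R}$, which is one real dimensional and therefore \emph{not} an open subset of the complex slice $L_J$. Calling it a ``nonempty open set'' is wrong. What you actually need (and what saves the argument) is that $U\cap\mathbb{R}$ is nonempty by the s-domain hypothesis and so has accumulation points inside the connected open set $U\cap L_J$; the one-variable identity theorem then forces the $L_J$-valued Splitting-Lemma components of $\tilde f-f$ to vanish on the whole slice. The conclusion stands unchanged, but the justification should invoke accumulation points rather than openness.
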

	\begin{proposition}[\cite{10.1063/1.3636718}]\label{prop-norm-alpha beta}
		Let $U\subseteq\mathbb{R}^{n+1}$ be an axially symmetric s-domain, let $D\subseteq\mathbb{R}^2$ be such that $\mathbf{x}=u+Iv$ for all $\lr{u,v}\in D$ and let $f:U\to\mathbb{R}_n$. The function $f$ is left slice monogenic if and only if there exist two differentiable functions $\alpha,\beta:D\to\mathbb{R}_n$ satisfying $\alpha\lr{u,v}=\alpha\lr{u,-v}$, $\beta\lr{u,v}=-\beta\lr{u,-v}$ and the Cauchy-Riemann system
		\begin{equation*}
			\begin{cases}
				\partial_{u}\alpha-\partial_{v}\beta=0\\
				\partial_{v}\alpha+\partial_{u}\beta=0
			\end{cases}
		\end{equation*}
		such that $f\left(u+Iv\right)=\alpha(u,v)+I\beta(u,v)$.
	\end{proposition}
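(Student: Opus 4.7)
The plan is to prove the two implications separately. The forward direction leans on the Representation Formula (Proposition~\ref{prop-RF}), while the reverse direction requires a well-definedness check before the Cauchy-Riemann verification is even meaningful.

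For the ``only if'' part, assume $f\in\mathcal{SM}_{l}(U)$. Proposition~\ref{prop-RF} already provides the decomposition $f(u+Iv)=\alpha(u,v)+I\beta(u,v)$ with $\alpha,\beta:D\to\mathbb{R}_{n}$ independent of the choice of $I\in\mathbb{S}$. The two parity identities follow immediately: replacing $v$ by $-v$ in the explicit formulas for $\alpha$ and $\beta$ merely swaps $f(u+Iv)$ with $f(u-Iv)$, which leaves $\alpha$ invariant and flips the sign of $\beta$. For the Cauchy-Riemann system, fix $I\in\mathbb{S}$ and substitute $f_I=\alpha+I\beta$ into the slice-monogenic equation $(\partial_u+I\partial_v)f_I=0$. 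Using that the real partial derivatives commute with the constant $I$, and that $I^2=-1$, the expansion collapses to
\[
(\partial_u\alpha-\partial_v\beta)+I(\partial_u\beta+\partial_v\alpha)=0.
\]
Since $\alpha,\beta$ do not depend on $I$, rerunning the same derivation with $-I\in\mathbb{S}$ in place of $I$ yields the twin identity $(\partial_u\alpha-\partial_v\beta)-I(\partial_u\beta+\partial_v\alpha)=0$; adding and subtracting, and using that $I$ is invertible in $\mathbb{R}_n$, separates the two CR equations.

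For the ``if'' direction, assume $\alpha,\beta$ satisfy the parity and Cauchy-Riemann conditions, and set $f(u+Iv):=\alpha(u,v)+I\beta(u,v)$. The first task is to verify that this prescription truly defines a function on the axially symmetric s-domain $U$. At real points $\mathbf{x}=u$, the parity $\beta(u,v)=-\beta(u,-v)$ forces $\beta(u,0)=0$, so $\alpha(u,0)+I\beta(u,0)=\alpha(u,0)$ is independent of the auxiliary $I$. At a non-real point the only ambiguity is the identification $u+Iv=u+(-I)(-v)$, and the two parity conditions give precisely $\alpha(u,v)+I\beta(u,v)=\alpha(u,-v)+(-I)\beta(u,-v)$. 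Once $f$ is well defined, fixing any $I\in\mathbb{S}$ and computing directly
\[
(\partial_u+I\partial_v)\bigl(\alpha(u,v)+I\beta(u,v)\bigr)=(\partial_u\alpha-\partial_v\beta)+I(\partial_u\beta+\partial_v\alpha)=0
\]
shows that $f_I$ is holomorphic on $U\cap L_I$ for every $I$, i.e.\ $f\in\mathcal{SM}_{l}(U)$.

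The one genuinely delicate point lies in the forward direction: because $\alpha$ and $\beta$ take values in $\mathbb{R}_n$ and do not in general commute with $I$, one cannot read off ``real and imaginary parts'' of the identity $(\partial_u\alpha-\partial_v\beta)+I(\partial_u\beta+\partial_v\alpha)=0$ as in the classical complex-analytic splitting. It is exactly the $I$-independence of $\alpha$ and $\beta$ supplied by Proposition~\ref{prop-RF} that legitimizes varying $I$ across $\mathbb{S}$ and thereby decouples the two components; stripped of that ingredient, the CR system would not follow.
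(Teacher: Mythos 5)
The paper does not prove this proposition; it is quoted with a citation to Colombo, Gonz\'alez-Cervantes, and Sabadini, so there is no in-paper argument to compare against. On its own terms your proof is essentially correct and captures the standard argument. In the forward direction you correctly pull $\alpha,\beta$ from the Representation Formula (Proposition~\ref{prop-RF}), read off the parities, and then decouple the single identity $(\partial_u\alpha-\partial_v\beta)+I(\partial_u\beta+\partial_v\alpha)=0$ by rerunning it at $-I$; this is precisely the right device, and it is equivalent to evaluating the same identity at $(u,-v)$ and invoking the fact that $\partial_u\alpha-\partial_v\beta$ is even in $v$ while $\partial_u\beta+\partial_v\alpha$ is odd. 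Your closing remark about why the naive ``take real and imaginary parts'' fails when the coefficients sit in $\mathbb{R}_n$ rather than in $L_I$ is the genuine subtlety, and you handle it correctly. The well-definedness check in the reverse direction (the coincidence $u+Iv=u+(-I)(-v)$, plus the degeneracy on the real axis forcing $\beta(u,0)=0$) is also the right one.

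The one point you pass over silently is the real differentiability of the reconstructed $f$ as a function on the $(n+1)$-dimensional set $U$. Membership in $\mathcal{SM}_l(U)$ requires, by the definition recalled in Section~2, that $f:U\to\mathbb{R}_n$ be real differentiable, not merely that each restriction $f_I$ satisfy $\overline{\partial_I}f_I=0$. Writing $f(\mathbf{x})=\alpha(x_0,|\underline{x}|)+\frac{\underline{x}}{|\underline{x}|}\,\beta(x_0,|\underline{x}|)$ makes visible that the expression involves $|\underline{x}|$ and $\underline{x}/|\underline{x}|$, neither of which is differentiable where $\underline{x}=0$; since $U$ is an s-domain it does meet the real axis, so this is not vacuous. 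Differentiability across $U\cap\mathbb{R}$ has to be extracted from the hypotheses that $\alpha$ is even, $\beta$ is odd, and both are differentiable on $D$ (so in particular $\beta(\cdot,0)=0$ and $\partial_v\alpha(\cdot,0)=0$). This is a standard verification but it belongs in the ``if'' direction and is missing from your write-up; the rest of the argument is sound.
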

	\begin{proposition}[\cite{10.1063/1.3636718}]
		Let $U\subseteq\mathbb{R}^{n+1}$ be an axially symmetric s-domain and let $f$ be a left slice monogenic function on $U$. For every $I=I_1\in\mathbb{S}$, let $I_2,\dots,I_n$ be a completion to a basis of $\mathbb{R}_n$ satisfying the defining relations $I_rI_s+I_sI_r=-2\delta_{rs}$. Then there exist $2^{n}$ holomorphic intrinsic functions $h_{A}:U\cap L_{I}\to L_{I}$, such that $$f_{I}\left(z\right)=\sum_{A}h_{A}\left(z\right)I_{A},$$ where $A\in\mathcal{P}_{<}\lr{\LR{1,\dots,n}}$ and 
		\begin{equation*}
			I_{A}=
			\begin{cases}
				1,  &A=\varnothing,\\
				I_{i_1}I_{i_2}\dots I_{i_s}, &A=\LR{i_1,\dots,i_s}.
			\end{cases}
		\end{equation*}
	\end{proposition}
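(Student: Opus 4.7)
The plan is to refine the Splitting Lemma (Proposition~\ref{prop-SL}) by further decomposing each $L_I$-valued holomorphic component into an intrinsic pair. Applying Proposition~\ref{prop-SL} with $I=I_1$ yields
\[
f_I(z)=\sum_{A'\in\mathcal{P}_{<}(\{2,\dots,n\})}F_{A'}(z)I_{A'},
\]
where each $F_{A'}:U\cap L_I\to L_I$ is holomorphic. Since $U$ is axially symmetric, the slice $U\cap L_I$ is invariant under the conjugation $z=u+Iv\mapsto \bar z=u-Iv$, so the map $z\mapsto\overline{F_{A'}(\bar z)}$ is well defined and, by a Schwarz reflection-type calculation on the Cauchy--Riemann equations for the components of $F_{A'}$, is again holomorphic from $U\cap L_I$ into $L_I$.

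Next, I would set
\[
G_{A'}(z):=\tfrac{1}{2}\bigl(F_{A'}(z)+\overline{F_{A'}(\bar z)}\bigr),\qquad H_{A'}(z):=-\tfrac{I}{2}\bigl(F_{A'}(z)-\overline{F_{A'}(\bar z)}\bigr),
\]
so that $G_{A'}$ and $H_{A'}$ are holomorphic, satisfy the intrinsic symmetries $G_{A'}(\bar z)=\overline{G_{A'}(z)}$ and $H_{A'}(\bar z)=\overline{H_{A'}(z)}$, and reconstruct $F_{A'}=G_{A'}+IH_{A'}$. Because $H_{A'}(z)\in L_I$, it commutes with $I$; and because the index $1$ is strictly less than every index in $A'\subseteq\{2,\dots,n\}$, we have $I\,I_{A'}=I_1I_{A'}=I_{\{1\}\cup A'}$ in the ordered basis. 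Substituting back produces
\[
f_I(z)=\sum_{A'}G_{A'}(z)I_{A'}+\sum_{A'}H_{A'}(z)I_{\{1\}\cup A'}.
\]
Re-indexing over $A\in\mathcal{P}_{<}(\{1,\dots,n\})$, I would define $h_A:=G_A$ when $1\notin A$ and $h_A:=H_{A\setminus\{1\}}$ when $1\in A$, which produces $2\cdot 2^{n-1}=2^n$ intrinsic holomorphic functions realizing $f_I(z)=\sum_A h_A(z)I_A$.

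I do not expect any genuine obstacle: the two ingredients are the already-cited Splitting Lemma and the classical complex-analytic fact that any holomorphic function on a conjugation-invariant planar domain decomposes uniquely into intrinsic real and imaginary parts. The only point requiring care is the Clifford bookkeeping, namely tracking the anticommutation $I_rI_s=-I_sI_r$ (for $r\neq s$) to justify the identification $I\,I_{A'}=I_{\{1\}\cup A'}$, and invoking the axial symmetry of $U$ so that the reflected argument $\bar z$ remains in the domain of $F_{A'}$.
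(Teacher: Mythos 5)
Your proof is correct, and since the paper only cites this proposition from Colombo--Gonz\'alez-Cervantes--Sabadini rather than proving it, there is no in-text argument to compare against. Your approach is the natural one: apply the Splitting Lemma to get the $2^{n-1}$ components $F_{A'}$ with $A'\subseteq\{2,\dots,n\}$, and then decompose each $L_I$-valued holomorphic $F_{A'}$ on the conjugation-symmetric slice $U\cap L_I$ into its intrinsic ``real'' and ``imaginary'' parts $G_{A'}(z)=\tfrac12\bigl(F_{A'}(z)+\overline{F_{A'}(\bar z)}\bigr)$ and $H_{A'}(z)=-\tfrac I2\bigl(F_{A'}(z)-\overline{F_{A'}(\bar z)}\bigr)$. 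All the pieces check out: axial symmetry of $U$ gives conjugation-invariance of $U\cap L_I$ (since $u-Iv=u+(-I)v$ lies on the sphere $[u+Iv]$); the map $z\mapsto\overline{F_{A'}(\bar z)}$ is holomorphic by the usual Schwarz-reflection argument; both $G_{A'}$ and $H_{A'}$ satisfy $h(\bar z)=\overline{h(z)}$; $F_{A'}=G_{A'}+IH_{A'}$; $I$ commutes with $H_{A'}(z)\in L_I$; and because $1$ precedes every index in $A'$, $I\,I_{A'}=I_{\{1\}\cup A'}$ respects the ordered-index convention. Re-indexing over $\mathcal{P}_{<}(\{1,\dots,n\})$ gives exactly $2^n$ intrinsic holomorphic functions as claimed. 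One small remark: you appeal only to axial symmetry, not to the s-domain hypothesis; the latter is needed if one wishes, as in Proposition~\ref{prop-NE}, to view each $h_A$ as the slice restriction of a function in $\mathcal{N}_l(U)$ via the Extension Lemma, but it plays no role in the purely slicewise statement you actually proved.
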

	Since $h_{A}\lr{u+Iv}$ is $L_{I}$-valued, we have 
	$$h_{A}\lr{u+Iv}=\Re\LR{h_{A}\lr{u+Iv}}+I\Im\LR{h_{A}\lr{u+Iv}}.$$
	By $\overline{h_{A}\lr{u+Iv}}=h_{A}\lr{u-Iv}$, we get $\Re\LR{h_{A}\lr{u+Iv}}=\Re\LR{h_{A}\lr{u-Iv}}$ and $\Im\LR{h_{A}\lr{u+Iv}}=-\Im\LR{h_{A}\lr{u-Iv}}$.
	Note that
	\begin{align*}
		f\lr{u+Iv}=&\sum_{A}h_{A}\lr{u+Iv}I_{A}\\
		=&\sum_{A}\Lr{\Re\LR{h_{A}\lr{u+Iv}}+I\Im\LR{h_{A}\lr{u+Iv}}}I_{A}\\
		=&\sum_{A}\Re\LR{h_{A}\lr{u+Iv}}I_{A}+I\sum_{A}\Im\LR{h_{A}\lr{u+Iv}}I_{A}\\
		=&\alpha\lr{u,v}+I\beta\lr{u,v}
	\end{align*}
	where $\alpha\lr{u,v}=\sum_{A}\Re\LR{h_{A}\lr{u+Iv}}I_{A}$ and $\beta\lr{u,v}=\sum_{A}\Im\LR{h_{A}\lr{u+Iv}}I_{A}$. It is obvious that $\alpha\lr{u,v}=\alpha\lr{u,-v}$, $\beta\lr{u,v}=-\beta\lr{u,-v}$ and 
	\begin{equation*}
		\begin{cases}
			\partial_{u}\alpha-\partial_{v}\beta=0,\\
			\partial_{v}\alpha+\partial_{u}\beta=0.
		\end{cases}
	\end{equation*}
	It follows from Proposition \ref{prop-norm-alpha beta} and Proposition \ref{prop-RF} that 
	\begin{equation*}
		\abs{\Re\LR{h_{A}\lr{u+Iv}}}\leq\abs{\alpha\lr{u,v}}\leq\frac{1}{2}\Lr{\abs{f\lr{u-Iv}}+\abs{f\lr{u+Iv}}}
	\end{equation*}
	and
	\begin{equation*}
		\abs{\Im\LR{h_{A}\lr{u+Iv}}}\leq\abs{\beta\lr{u,v}}\leq\frac{1}{2}\Lr{\abs{f\lr{u-Iv}}+\abs{f\lr{u+Iv}}}.
	\end{equation*}
	Hence, we have
	\begin{equation}\label{norm-hA}
		\abs{h_{A}\lr{u+Iv}}\leq\frac{\sqrt{2}}{2}\Lr{\abs{f\lr{u-Iv}}+\abs{f\lr{u+Iv}}}.
	\end{equation}
	\begin{proposition}[\cite{colombo2010extension}, Extension Lemma]\label{prop-ext}
		Let $J\in\mathbb{S}$ and let $D$ be a domain in $L_{J}$, symmetric with respect to the real axis and such that $D\cap\mathbb{R}=\varnothing$. Let $U_{D}$ be the axially symmetric s-domain defined by
		\[U_{D}=\bigcup_{u+Jv\in D,I\in\mathbb{S}}\left(u+Iv\right).\]
		If $f:D\to L_{J}$ is holomorphic, then the function $\mathbf{ext}\left(f\right):U_{D}\to\mathbb{R}_n$ defined by
		\begin{align}\label{formula-ext(f)}
			\begin{split}
				&\mathbf{ext}\left(f\right)\left(u+Iv\right)\\
				=&\frac{1}{2}\left[f\left(u-Jv\right)+f\left(u+Jv\right)\right]+\frac{IJ}{2}\left[f\left(u-Jv\right)-f\left(u+Jv\right)\right]
			\end{split}
		\end{align}
		is the unique left slice monogenic extension of $f$ to $U_{D}$.
		Similarly, if $f:D\to\mathbb{R}_n$ satisfies $\overline{\partial_{J}}f\left(u+Jv\right)=0$, then the function $\ext\lr{f}$ defined by \eqref{formula-ext(f)} is its unique left slice monogenic extension to $U_D$.
	\end{proposition}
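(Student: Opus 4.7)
The plan is to split the statement into three routine verifications: (i) $\ext(f)$ genuinely restricts to $f$ on $D$; (ii) $\ext(f)$ is left slice monogenic on $U_D$; (iii) the extension is unique. For (i), substitute $I=J$ in formula \eqref{formula-ext(f)} and use $J^2=-1$: the two terms collapse algebraically to $f(u+Jv)$.

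For (ii), the natural tool is Proposition \ref{prop-norm-alpha beta}. Set
\[
\alpha(u,v) := \tfrac{1}{2}\bigl[f(u-Jv)+f(u+Jv)\bigr], \qquad \beta(u,v) := \tfrac{J}{2}\bigl[f(u-Jv)-f(u+Jv)\bigr],
\]
so that $\ext(f)(u+Iv) = \alpha(u,v)+I\beta(u,v)$ for every $I\in\mathbb{S}$. The parity conditions $\alpha(u,-v)=\alpha(u,v)$ and $\beta(u,-v)=-\beta(u,v)$ are immediate from the symmetry of $D$ and swapping $v$ with $-v$ in the defining expressions. The Cauchy--Riemann system $\partial_u\alpha-\partial_v\beta=0$ and $\partial_v\alpha+\partial_u\beta=0$ reduces, by chain rule, to two identities both of which follow from the single relation $\partial_v f = J\,\partial_u f$ on $D$ (which is exactly $\overline{\partial_J}f=0$) together with $J^2=-1$. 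Proposition \ref{prop-norm-alpha beta} then gives the left slice monogenicity of $\ext(f)$.

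For (iii), let $g:U_D\to\mathbb{R}_n$ be any left slice monogenic extension of $f$. The axial symmetry of $D$ together with $D\cap\mathbb{R}=\varnothing$ yields $U_D\cap L_J = D$, so $g(u\pm Jv)=f(u\pm Jv)$ whenever $u+Jv\in D$. Applying the Representation Formula (Proposition \ref{prop-RF}) with the choice $I=J$ then expresses $g(\mathbf{x})$ for $\mathbf{x}=u+I_{\mathbf{x}}v\in U_D$ as
\[
g(\mathbf{x}) = \tfrac{1}{2}\bigl[g(u-Jv)+g(u+Jv)\bigr] + \tfrac{I_{\mathbf{x}}J}{2}\bigl[g(u-Jv)-g(u+Jv)\bigr],
\]
and substituting $g(u\pm Jv)=f(u\pm Jv)$ turns the right-hand side into $\ext(f)(\mathbf{x})$. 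Hence $g=\ext(f)$ on $U_D$.

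The main obstacle is purely bookkeeping: since $\partial_u f$ and $\partial_v f$ are $\mathbb{R}_n$-valued and left multiplication by $J$ is noncommutative with general Clifford numbers, one must keep $J$ on the left throughout, applying $\partial_v f = J\,\partial_u f$ in that order. This is exactly why the same argument also handles the second half of the statement (the case $f:D\to\mathbb{R}_n$ with $\overline{\partial_J}f=0$): the entire proof invokes only $J^2=-1$, the axial symmetry of $D$, and the single differential equation $\partial_u f + J\,\partial_v f=0$, and never uses that the values of $f$ lie in $L_J$.
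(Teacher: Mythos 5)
Your proposal is correct, and since the paper merely cites this Extension Lemma from \cite{colombo2010extension} without reproducing its proof, the comparison is with the standard argument in the slice-analysis literature, which your proof matches closely. Part (i) is the straightforward $I=J$, $J^2=-1$ collapse; part (ii) is the standard reformulation via the $\alpha$--$\beta$ Cauchy--Riemann characterization (your Proposition \ref{prop-norm-alpha beta}), and your computation of the even/odd parity and the two C--R identities from $\partial_v f = J\,\partial_u f$ is right, keeping $J$ on the left as you note; part (iii) via the Representation Formula with $I=J$ is the usual uniqueness argument. Your observation that the argument never uses $f(D)\subseteq L_J$ and hence covers both halves of the statement simultaneously is also correct and is exactly why the lemma is stated in two clauses.

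One small caveat worth flagging, though it is inherited from the paper's own phrasing rather than introduced by you: Propositions \ref{prop-norm-alpha beta} and \ref{prop-RF} as quoted in this paper are stated for axially symmetric \emph{s-domains}, and the definition of s-domain requires nonempty intersection with $\mathbb{R}$. When $D\cap\mathbb{R}=\varnothing$, the set $U_D$ is axially symmetric and slice-connected but is \emph{not} an s-domain in the sense of Definition~2.2. The versions of the $\alpha$--$\beta$ characterization and the Representation Formula that apply to axially symmetric domains not meeting $\mathbb{R}$ do hold (and are what \cite{colombo2010extension} actually uses), so the argument is sound, but if you wanted to be fully rigorous within the paper's stated hypotheses you would need to cite those slightly more general statements rather than the ones reproduced here.

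Finally, a tiny simplification: in (iii) you invoke $D\cap\mathbb{R}=\varnothing$ to conclude $U_D\cap L_J=D$, but in fact the symmetry of $D$ about the real axis is all that is needed; the intersection condition plays no role in that step.
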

	
	\begin{proposition}[\cite{10.1063/1.3636718}]\label{prop-NE}
		Let $U\subseteq\mathbb{R}^{n+1}$ be an axially symmetric s-domain. Let $I_1,I_2,\dots,I_n$ be a completion to a basis of $\mathbb{R}_n$. Then 
		\begin{equation*}
			\mathcal{SM}_{l}\left(U\right)=\oplus_{A}\mathcal{N}_{l}\left(U\right)I_{A}
		\end{equation*}
		where $A\in\mathcal{P}_{<}\lr{\LR{1,\dots,n}}$,
		\begin{equation*}
			I_{A}=
			\begin{cases}
				1,  &A=\varnothing,\\
				I_{i_1}I_{i_2}\dots I_{i_s}, &A=\LR{i_1,\dots,i_s},
			\end{cases}
		\end{equation*}
		and
		\begin{equation*}
			\mathcal{N}_{l}\left(U\right)=\left\{f\in\mathcal{SM}_{l}\left(U\right), f\left(U\cap L_{I}\right)\subseteq L_{I}, I\in\mathbb{S}\right\}.
		\end{equation*}
	\end{proposition}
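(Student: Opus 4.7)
The plan is to use the decomposition from the preceding proposition combined with the Extension Lemma (Proposition~\ref{prop-ext}). Fix $I=I_{1}\in\mathbb{S}$ and complete it to a basis $I_{1},\dots,I_{n}$ of $\mathbb{R}_{n}$. Given $f\in\mathcal{SM}_{l}\lr{U}$, the preceding proposition yields intrinsic holomorphic functions $h_{A}:U\cap L_{I}\to L_{I}$ with $f_{I}\lr{z}=\sum_{A}h_{A}\lr{z}I_{A}$ on $U\cap L_{I}$.

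For each $A$, I will set $f_{A}:=\ext\lr{h_{A}}$, using Proposition~\ref{prop-ext} with $J=I$. The key computation is to verify that $f_{A}\in\mathcal{N}_{l}\lr{U}$: since $h_{A}$ is intrinsic one has $h_{A}\lr{u-Iv}=\overline{h_{A}\lr{u+Iv}}$, and plugging this into the extension formula \eqref{formula-ext(f)} yields the clean identity
\[
\ext\lr{h_{A}}\lr{u+Kv}=\Re\LR{h_{A}\lr{u+Iv}}+K\,\Im\LR{h_{A}\lr{u+Iv}},
\]
which lies in $L_{K}$ for every $K\in\mathbb{S}$. Hence $f_{A}\in\mathcal{N}_{l}\lr{U}$.

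To recover $f=\sum_{A}f_{A}I_{A}$ on the whole of $U$, I will observe that by construction the right-hand side agrees with $f$ on $U\cap L_{I}$ (since $f_{A}|_{L_{I}}=h_{A}$). Because both sides are left slice monogenic on the axially symmetric s-domain $U$, the Representation Formula (Proposition~\ref{prop-RF}) forces equality on all of $U$. For directness of the sum, I will suppose $\sum_{A}g_{A}I_{A}\equiv 0$ on $U$ with $g_{A}\in\mathcal{N}_{l}\lr{U}$. At a real point $x\in U\cap\mathbb{R}$ (nonempty since $U$ is an s-domain), each $g_{A}\lr{x}$ belongs to $L_{I}\cap L_{J}=\mathbb{R}$ for any two distinct $I,J\in\mathbb{S}$; hence the $\mathbb{R}$-linear independence of $\LR{I_{A}}$ in $\mathbb{R}_{n}$ forces $g_{A}\lr{x}=0$ on $U\cap\mathbb{R}$, and the identity principle applied slice by slice extends this to $g_{A}\equiv 0$.

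The main obstacle I anticipate is the explicit verification that $\ext\lr{h_{A}}$ is genuinely slice-preserving: this is a direct but sign-sensitive calculation with \eqref{formula-ext(f)}, relying crucially on the intrinsic nature of $h_{A}$. Once that identity is in hand, the existence, identification, and uniqueness of the decomposition all follow from standard slice monogenic machinery already recorded above.
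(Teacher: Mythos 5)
The paper does not give its own proof of this proposition: it is stated as a citation from \cite{10.1063/1.3636718}, so there is no in-text argument to compare against. On its own merits, your reconstruction is correct and complete. You invoke the unnumbered proposition (the slice-by-slice intrinsic splitting $f_{I}=\sum_{A}h_{A}I_{A}$) together with the Extension Lemma (Proposition~\ref{prop-ext}), verify by the explicit computation that
\[
\ext\lr{h_{A}}\lr{u+Kv}=\Re\LR{h_{A}\lr{u+Iv}}+K\,\Im\LR{h_{A}\lr{u+Iv}}\in L_{K},
\]
which hinges precisely on $h_{A}$ being intrinsic, so $\ext\lr{h_{A}}\in\mathcal{N}_{l}\lr{U}$; the identity $f=\sum_{A}\ext\lr{h_{A}}I_{A}$ then follows from uniqueness of slice monogenic extension, and directness follows from the observation that each $g_{A}\in\mathcal{N}_{l}\lr{U}$ is real-valued on $U\cap\mathbb{R}$ together with the identity principle slice by slice. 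The argument is sound.

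Two small points worth tightening. First, $L_{I}\cap L_{J}=\mathbb{R}$ requires $J\neq\pm I$, not merely $J\neq I$, since $L_{-I}=L_{I}$; for $n\geq 2$ one can always pick such a $J$, but the statement as written is slightly imprecise. Second, the Extension Lemma as reproduced in the paper carries the hypothesis $D\cap\mathbb{R}=\varnothing$, which is evidently a typo for $D\cap\mathbb{R}\neq\varnothing$ (the paper itself applies it repeatedly to slices through the reals); your usage with $D=U\cap L_{I}$ is the intended one and is consistent with how the paper deploys the lemma elsewhere, so this is not a gap on your side.
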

	%Now, we give the Plancherel theorem for the 1DLCFT as follows.
	
	%\begin{proof}
	%	Let $I=I_1\in\mathbb{S}$. By Proposition \ref{prop-I}, we can choose $I_2,\dots,I_n\in\mathbb{S}$ such that $I=I_{1},I_{2},\dots,I_{n}$ form an orthonormal basis of $\mathbb{R}_{n}$. Using Proposition \ref{prop-SL}, we have
	%    \begin{equation*}
		%        \mathscr{F}_I(f|_{\mathbb{R}})(w)=\sum_{A'}\mathscr{F}_I(F_{A'}|_{\mathbb{R}})(w)I_{A'}.
		%    \end{equation*}
	%	where $F_{A'}$ is $L_{I}$-valued for each $A'\in\mathcal{P}_{<}\lr{\LR{2,\dots,n}}$. Hence, for each $A'\in\mathcal{P}_{<}\lr{\LR{2,\dots,n}}$, we have that $\mathscr{F}_I(F_{A'}|_{\mathbb{R}})$ is $L_{I}$-valued and $$\lVert\mathscr{F}_I(F_{A'}|_{\mathbb{R}})\rVert_{L^{2}(\mathbb{R})}=\lVert F_{A'}|_{\mathbb{R}}\rVert_{L^{2}(\mathbb{R})}.$$ Since $I=I_{1},I_{2},\dots,I_{n}$ form an orthonormal basis of $\mathbb{R}_{n}$, we have
	%	\begin{equation*}
		%		\lVert \mathscr{F}_I(f|_{\mathbb{R}})\rVert_{L^{2}(\mathbb{R})}^{2}=\sum_{A'}\lVert \mathscr{F}_I(F_{A'}|_{\mathbb{R}})\rVert_{L^{2}(\mathbb{R})}^{2}=\sum_{A'}\lVert F_{A'}|_{\mathbb{R}}\rVert_{L^{2}(\mathbb{R})}^{2}=\lVert f|_{\mathbb{R}}\rVert_{L^{2}(\mathbb{R})}^{2}.
		%	\end{equation*}
	%\end{proof}
	\section{Main results}
	Our main results are as follows.
	
	\subsection{Non-compact type Paley-Wiener theorem}
	\ 
	\newline
	\indent
	We recall that the Hardy $H^{2}$ space on the right-half complex plane $\Pi_{+}$ is defined as
	\begin{equation*}
		H^{2}(\Pi_{+})=\LR{f\ \text{is holomorphic on}\ \Pi_{+}:\sup_{x>0}\int_{\mathbb{R}}\abs{f\lr{x+\mathbf{i}y}}^{2}\mathrm{d}y<\infty}.
	\end{equation*}
	It is known that a function $f\in H^{2}(\Pi_{+})$ has the NTBL $f\lr{\mathbf{i}y}$ almost everywhere at $\mathbf{i}y$ and $f\lr{\mathbf{i}y}\in L^{2}(\mathbb{R})$. Moreover, 
	\begin{equation*}
		\sup_{x>0}\int_{\mathbb{R}}\abs{f\lr{x+\mathbf{i}y}}^{2}\mathrm{d}y=\int_{\mathbb{R}}\abs{f\lr{\mathbf{i}y}}^{2}\mathrm{d}y.
	\end{equation*}
	Similarly, the open right half-space of $\mathbb{R}^{n+1}$ is defined as
	$$\mathbb{R}^{n+1}_{+}=\left\{\mathbf{x}=u+Iv:u,v\in\mathbb{R},u>0,I\in\mathbb{S}\right\}.$$
	We set $\Pi_{+,I}=\mathbb{R}_{+}^{n+1}\cap L_{I}$. By Proposition \ref{prop-SL}, we have 
	$$f_{I}\lr{u+Iv}=\sum_{A'}F_{A'}\lr{u+Iv}I_{A'}$$
	where $A'\in\mathcal{P}_{<}\lr{\LR{2,\dots,n}}$ and
	\begin{equation*}
		I_{A'}=
		\begin{cases}
			1,  &A'=\varnothing,\\
			I_{i_1}I_{i_2}\dots I_{i_s}, &A'=\LR{i_1,\dots,i_s}.
		\end{cases}
	\end{equation*}
	Since all the NTBL of $F_{A'}$ exist almost everywhere at $Iv$, also the NTBL of $f$ exists almost everywhere at $Iv$ on $\Pi_{+,I}$ and $f\lr{Iv}=\sum_{A'}F_{A'}\lr{Iv}I_{A'}$ almost everywhere $Iv$. It follows that
	\begin{align*}
		\sup_{u>0}\int_{\mathbb{R}}\abs{f\lr{u+Iv}}^{2}\mathrm{d}v=&\sup_{u>0}\int_{\mathbb{R}}\sum_{A'}\abs{F_{A'}\lr{u+Iv}}^{2}\mathrm{d}v\\
		\leq&\sum_{A'}\sup_{u>0}\int_{\mathbb{R}}\abs{F_{A'}\lr{u+Iv}}^{2}\mathrm{d}v\\
		=&\sum_{A'}\int_{\mathbb{R}}\abs{F_{A'}\lr{Iv}}^{2}\mathrm{d}v\\
		=&\int_{\mathbb{R}}\abs{f\lr{Iv}}^{2}\mathrm{d}v.
	\end{align*}
	So, we define
	\begin{equation*}
		H_{\text{slice}}^{2}\left(\Pi_{+,I}\right)=\left\{f\in\mathcal{SM}_{l}\left(\mathbb{R}^{n+1}_{+}\right);\int_{\mathbb{R}}\lvert f\left(Iv\right)\rvert^{2}\mathrm{d}v<\infty\right\},
	\end{equation*}
	where $f\left(Iv\right)$ denotes the NTBL of $f$ at $Iv$ on $\Pi_{+,I}$. 
	
	In $H_{\text{slice}}^{2}\left(\Pi_{+,I}\right)$ we define the inner product
	\begin{equation*}
		\innerproduct{f}{g}_{H_{\text{slice}}^{2}\left(\Pi_{+,I}\right)}=\int_{\mathbb{R}}\overline{g_{I}\lr{Iv}}f_{I}\lr{Iv}\mathrm{d}v.
	\end{equation*}
	where $f_{I}\left(Iv\right)$, $g_{I}\left(Iv\right)$ denote the NTBL of $f,g$ at $Iv$ on $\Pi_{+,I}$. This inner product gives the norm
	\begin{equation*}
		\norm{f}_{H_{\text{slice}}^{2}\left(\Pi_{+,I}\right)}=\lr{\int_{\mathbb{R}}\abs{f_{I}\lr{Iv}}^{2}\mathrm{d}v}^{\frac{1}{2}}.
	\end{equation*}
	\begin{proposition}
		A function $f\in H_{\text{slice}}^{2}\left(\Pi_{+,I}\right)$ for some $I\in\mathbb{S}$ if and only if $f\in H_{\text{slice}}^{2}\left(\Pi_{+,J}\right)$ for all $J\in\mathbb{S}$.
	\end{proposition}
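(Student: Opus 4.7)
The plan is to establish the nontrivial direction: if $f\in H^{2}_{\text{slice}}(\Pi_{+,I})$ for a single $I\in\mathbb{S}$, then $f\in H^{2}_{\text{slice}}(\Pi_{+,J})$ for every $J\in\mathbb{S}$; the reverse implication is immediate by specialization. The main tool will be the Representation Formula (Proposition \ref{prop-RF}), valid because $\mathbb{R}^{n+1}_{+}$ is an axially symmetric s-domain. Taking $\mathbf{x}=u+Jv$ with $u>0$ yields
\[
f(u+Jv)=\frac{1-JI}{2}\,f(u+Iv)+\frac{1+JI}{2}\,f(u-Iv),
\]
which expresses the values of $f$ on $\Pi_{+,J}$ in terms of its values on $\Pi_{+,I}$.

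Next I would control the Clifford norms of the coefficients. A short computation using $\overline{JI}=IJ$ and $I^{2}=J^{2}=-1$ shows $|JI|=1$, so $|1\pm JI|\leq 2$. Combined with the standard estimate $|ab|\leq C_{n}|a|\,|b|$ for the Clifford norm, this produces a pointwise bound of the form $|f(u+Jv)|^{2}\leq C\bigl(|f(u+Iv)|^{2}+|f(u-Iv)|^{2}\bigr)$. Integrating in $v$ and applying $v\mapsto -v$ to the second summand collapses it into the first; then I take $\sup_{u>0}$ and invoke the inequality
\[
\sup_{u>0}\int_{\mathbb{R}}|f(u+Iv)|^{2}\,dv\leq\int_{\mathbb{R}}|f(Iv)|^{2}\,dv<\infty
\]
displayed just before the definition of $H^{2}_{\text{slice}}(\Pi_{+,I})$. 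The conclusion is $\sup_{u>0}\int_{\mathbb{R}}|f(u+Jv)|^{2}\,dv<\infty$.

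To close the argument I must check that $f$ admits an NTBL at $Jv$ and convert the sup bound into a bound on the NTBL integral. Applying the Splitting Lemma (Proposition \ref{prop-SL}) with $J_{1}=J$ writes $f_{J}=\sum_{A'}G_{A'}J_{A'}$ with each $G_{A'}$ holomorphic on $\Pi_{+,J}$; the identity $|f|^{2}=\sum_{A'}|G_{A'}|^{2}$ together with the sup bound places each $G_{A'}$ in the classical Hardy space $H^{2}(\Pi_{+,J})$, so each admits an a.e.\ NTBL and hence so does $f$. Fatou's lemma then yields $\int_{\mathbb{R}}|f(Jv)|^{2}\,dv\leq\sup_{u>0}\int_{\mathbb{R}}|f(u+Jv)|^{2}\,dv<\infty$, finishing the proof.

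I do not anticipate any serious obstacle: the argument is a clean transfer of one-variable Hardy-space theory from the slice $L_{I}$ to the slice $L_{J}$ via the Representation Formula. The only bookkeeping that needs care is the estimate on $|1\pm JI|$ and the change of variable $v\mapsto -v$ that removes the asymmetry introduced by the $f(u-Iv)$ term.
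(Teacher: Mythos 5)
Your proof is correct and follows essentially the same route as the paper: both invoke the Representation Formula on the axially symmetric s-domain $\mathbb{R}^{n+1}_{+}$ to bound $|f(u+Jv)|^{2}$ by $|f(u+Iv)|^{2}+|f(u-Iv)|^{2}$ (up to a constant), then integrate in $v$, reflect $v\mapsto -v$, and take $\sup_{u>0}$. If anything, you are slightly more explicit than the paper in two bookkeeping steps — justifying that $f$ has an NTBL on the slice $L_{J}$ via the Splitting Lemma and passing from $\sup_{u>0}\int|f(u+Jv)|^{2}\,dv$ to $\int|f(Jv)|^{2}\,dv$ via Fatou — which the paper takes for granted (that discussion appears just before the proposition). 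The only cosmetic difference is the constant: the paper uses the sharper factor $\tfrac{\sqrt 2}{2}$ from Proposition \ref{prop-RF}, obtaining the exact norm equality $\|f\|_{H^{2}_{\text{slice}}(\Pi_{+,J})}=\|f\|_{H^{2}_{\text{slice}}(\Pi_{+,I})}$, whereas your generic $C_{n}$ only yields two-sided boundedness; both are enough for the stated equivalence of membership.
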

	\begin{proof}
		By Proposition \ref{prop-RF}, we have
		\[\abs{f\lr{u+Jv}}\leq\frac{\sqrt{2}}{2}\lr{\abs{f\lr{u+Iv}}+\abs{f\lr{u-Iv}}}\]
		and
		\[\abs{f\lr{u+Jv}}^{2}\leq\frac{1}{2}\Lr{\abs{f\lr{u+Iv}}^{2}+\abs{f\lr{u-Iv}}^{2}}.\]
		It follows that
		\begin{align*}
			\norm{f}_{H_{\text{slice}}^{2}\left(\Pi_{+,J}\right)}^{2}=&\int_{\mathbb{R}}\abs{f_{J}\lr{Jv}}^{2}\mathrm{d}v\\
			=&\sup_{u>0}\int_{\mathbb{R}}\abs{f_{J}\lr{u+Jv}}^{2}\mathrm{d}v\\
			\leq&\sup_{u>0}\int_{\mathbb{R}}\frac{1}{2}\Lr{\abs{f_{I}\lr{u+Iv}}^{2}+\abs{f_{I}\lr{u-Iv}}^{2}}\mathrm{d}v\\
			\leq&\frac{1}{2}\Lr{\sup_{u>0}\int_{\mathbb{R}}\abs{f_{I}\lr{u+Iv}}^{2}\mathrm{d}v+\sup_{u>0}\int_{\mathbb{R}}\abs{f_{I}\lr{u-Iv}}^{2}\mathrm{d}v}\\
			=&\int_{\mathbb{R}}\abs{f_{I}\lr{Iv}}^{2}\mathrm{d}v\\
			=&\norm{f}_{H_{\text{slice}}^{2}\left(\Pi_{+,I}\right)}^{2}.
		\end{align*}
		By changing $J$ with $I$ we obtain the reverse inequality. 
	\end{proof}
	We now introduce the left slice monogenic Hardy space of the right-half space $\mathbb{R}_{+}^{n+1}.$
	
	\begin{definition}
		For $1\leq p\leq2$. We define $H_{\text{slice}}^{p}\left(\mathbb{R}_{+}^{n+1}\right)$ as the space of functions $f\in\mathcal{SM}_l\left(\mathbb{R}_{+}^{n+1}\right)$ such that
		\[\sup_{I\in\mathbb{S}}\int_{\mathbb{R}}|f\left(Iv\right)|^{p}\mathrm{d}v<\infty.\]
	\end{definition}
	It is easy to see that $f\in H_{\text{slice}}^{p}\left(\mathbb{R}_{+}^{n+1}\right)$ is equivalent to $f^{\frac{p}{2}}\in H_{\text{slice}}^{2}\left(\mathbb{R}_{+}^{n+1}\right)$.
	\begin{theorem}\label{PW-Rn+1Rn-Hp-FT}
		Let $f\in H_{\text{slice}}^{p}\left(\mathbb{R}_{+}^{n+1}\right)$, $1\leq p\leq2$. Then for every $J\in\mathbb{S}$, we have $\supp\mathscr{F}_{J}\lr{f|_{J\mathbb{R}}}\subset\left(-\infty,0\right]$ and
		\begin{equation*}
			f\lr{u+Jv}=\frac{1}{\sqrt{2\pi}}\int_{-\infty}^{0}e^{\lr{u+Jv}w}\mathscr{F}_{J}\lr{f|_{J\mathbb{R}}}\lr{w}\mathrm{d}w    
		\end{equation*}
		where $f|_{J\mathbb{R}}$ is the NTBL of $f$.
	\end{theorem}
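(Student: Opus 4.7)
The plan is to reduce the theorem, slice by slice, to the classical Paley--Wiener theorem for Hardy $H^p$ on a complex half-plane. Fix $J = J_1 \in \mathbb{S}$ and, by Proposition \ref{prop-I}, complete to an orthonormal basis $J_1,\ldots,J_n$ of $\mathbb{R}_n$. By the Splitting Lemma (Proposition \ref{prop-SL}) applied on the open set $\mathbb{R}^{n+1}_+$, one may write
\[f_J(u+Jv) = \sum_{A'} F_{A'}(u+Jv) J_{A'}, \qquad A' \in \mathcal{P}_{<}(\{2,\ldots,n\}),\]
where each $F_{A'} : \Pi_{+,J} \to L_J$ is holomorphic.

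From the identity $\abs{f(u+Jv)}^{2} = \sum_{A'} \abs{F_{A'}(u+Jv)}^{2}$ already established in the Preliminaries, one has $\abs{F_{A'}} \leq \abs{f}$ on $\Pi_{+,J}$. Combined with the hypothesis $f \in H^p_{\text{slice}}(\mathbb{R}^{n+1}_+)$, this pointwise bound, propagated to the NTBLs, places each $F_{A'}$, viewed through the identification $L_J \cong \mathbb{C}$, inside the classical $L_J$-valued Hardy space on the right half-plane, with NTBL $F_{A'}(Jv) \in L^p(\mathbb{R})$.

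I would then invoke the classical Paley--Wiener theorem for Hardy $H^p$ on the right half-plane (valid for $1 \leq p \leq 2$) applied to each $F_{A'}$: the 1DLCFT $\mathscr{F}_J(F_{A'}|_{J\mathbb{R}})$ is supported in $(-\infty, 0]$ and
\[F_{A'}(u+Jv) = \frac{1}{\sqrt{2\pi}} \int_{-\infty}^{0} e^{(u+Jv)w}\, \mathscr{F}_J(F_{A'}|_{J\mathbb{R}})(w)\, \mathrm{d}w.\]
Because the exponentials $e^{-Jvw}$ and $e^{(u+Jv)w}$ lie in $L_J$, they commute with the $L_J$-valued factors $F_{A'}(Jv)$ and $\mathscr{F}_J(F_{A'}|_{J\mathbb{R}})(w)$, so the basis elements $J_{A'}$ may be factored to the right of both the defining integral of the 1DLCFT and the reconstruction integral. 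Summing over $A'$ then gives
\[\mathscr{F}_J(f|_{J\mathbb{R}})(w) = \sum_{A'} \mathscr{F}_J(F_{A'}|_{J\mathbb{R}})(w)\, J_{A'},\]
whose support lies in $(-\infty,0]$, and the claimed representation of $f(u+Jv)$ follows immediately.

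The main obstacle I anticipate is not conceptual but the careful handling of NTBLs: specifically, verifying that the boundary $L^p$-condition in the definition of $H^p_{\text{slice}}$ does indeed force each splitting component $F_{A'}$ into the classical scalar $H^p(\Pi_{+,J})$ (i.e.\ the sup-over-$u>0$ condition, not merely a finite-$L^p$ NTBL), so that the classical Paley--Wiener machinery legitimately applies. Once this step is in place, the rest of the argument is a clean consequence of the scalar Paley--Wiener theorem for $1 \leq p \leq 2$ (which rests on the Hausdorff--Young inequality, thereby explaining the upper restriction $p \leq 2$) together with the $L_J$-linearity of $\mathscr{F}_J$ exploited above.
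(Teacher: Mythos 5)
Your proof is correct in outline but takes a genuinely different route from the paper's. You decompose $f$ slice-by-slice via the Splitting Lemma (Proposition \ref{prop-SL}), obtaining $2^{n-1}$ $L_J$-valued holomorphic components $F_{A'}$, and then reduce directly to the scalar Paley--Wiener theorem for $H^p$ of a half-plane (which is exactly what Lemma \ref{PW-PILI-Hp-FT} gives). The paper instead decomposes $f$ via Proposition \ref{prop-NE} into $2^{n}$ intrinsic components $h_A \in \mathcal{N}_l(\mathbb{R}_+^{n+1})$, proves a separate lemma (Lemma \ref{PW-NRn+1+-Hp-FT}) showing that for an intrinsic $h$ the one-slice hypothesis propagates to \emph{all} slices with $\mathscr{F}_J(h|_{J\mathbb{R}})$ independent of $J$, and only then sums over $A$. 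For the present (forward) theorem your route is shorter, since the theorem's conclusion is already stated per-slice and the Splitting Lemma needs nothing beyond the fixed slice $L_J$. What the paper's more elaborate intrinsic decomposition buys is the slice-independence of the spectrum, which is essential for the converse Theorem \ref{PW-Rn+1Rn-FT-Hp} (where one starts from data on a single slice $I$ and must produce a function in $H^p_{\text{slice}}(\mathbb{R}^{n+1}_+)$) and for the compact-type Theorem \ref{PW-Rn+1Rn}; for the theorem at hand it is not strictly needed.

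The one point you rightly flag and leave unresolved --- whether the $L^p$-boundary condition in the definition of $H^p_{\text{slice}}$ yields the classical $\sup_{u>0}$-condition required to invoke the scalar Paley--Wiener for each $F_{A'}$ --- is the same issue the paper silently relies on (it is built into the use of Lemma \ref{lemma-supabs-Hpnorm} inside the proof of Lemma \ref{PW-PILI-Hp-FT}, and into the discussion preceding the definition of $H^2_{\text{slice}}(\Pi_{+,I})$). So this is not a gap specific to your argument, but a shared imprecision: the definitions should be read as asserting membership of each slice restriction in the classical $H^p$ of the half-plane. With that understanding, the pointwise bound $\lvert F_{A'}\rvert \le \lvert f \rvert$ that you derive from the orthogonality of the $J_{A'}$ does indeed place each $F_{A'}$ in classical $H^p(\Pi_{+,J})$, and the rest of your argument --- commuting the $L_J$-valued exponential past the $L_J$-valued $F_{A'}$, factoring the $J_{A'}$ to the right, and recombining --- is sound and matches the linear algebra the paper performs for its $h_A I_A$ decomposition.
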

	\begin{theorem}\label{PW-Rn+1Rn-FT-Hp}
		Assume that there exists a purely imaginary unit paravector $I\in\mathbb{S}$, such that $f|_{I\mathbb{R}}\in L^{p}(\mathbb{R})$, $1\leq p\leq2$ and $\supp\FT{I}{f|_{I\mathbb{R}}}\subset\left(-\infty,0\right]$. Then $f\in H_{\text{slice}}^{p}\lr{\mathbb{R}_{+}^{n+1}}$.
	\end{theorem}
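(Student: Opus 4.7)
The strategy is to reduce to the classical Paley--Wiener theorem on the right half-plane via the Splitting Lemma, and then promote the resulting bound from the distinguished slice $L_I$ to all slices via the Representation Formula. Using Proposition~\ref{prop-I} we complete $I=I_1$ to an orthonormal basis $I_1,\ldots,I_n$ of $\mathbb{R}_n$, and by the Splitting Lemma (Proposition~\ref{prop-SL}) write $f_I(u+Iv)=\sum_{A'}F_{A'}(u+Iv)\,I_{A'}$ on $\Pi_{+,I}$ with each $F_{A'}:\Pi_{+,I}\to L_I$ holomorphic; taking NTBLs, the same decomposition persists for $f|_{I\mathbb{R}}$. Pulling the constant Clifford numbers $I_{A'}$ out of the defining integral yields
\[
\mathscr{F}_I(f|_{I\mathbb{R}})(w)=\sum_{A'}\mathscr{F}_I(F_{A'}|_{I\mathbb{R}})(w)\,I_{A'}.
\]
Each $\mathscr{F}_I(F_{A'}|_{I\mathbb{R}})(w)$ lies in $L_I$, and $\{I_{A'}\}\cup\{I\cdot I_{A'}\}_{A'\subset\{2,\ldots,n\}}$ is an orthonormal basis of $\mathbb{R}_n$; hence $\operatorname{\mathbf{supp}}\mathscr{F}_I(f|_{I\mathbb{R}})\subset(-\infty,0]$ forces $\operatorname{\mathbf{supp}}\mathscr{F}_I(F_{A'}|_{I\mathbb{R}})\subset(-\infty,0]$ for every $A'$. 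The identity $|f|^2=\sum_{A'}|F_{A'}|^2$ from the preliminaries gives $F_{A'}|_{I\mathbb{R}}\in L^p(\mathbb{R})$ as well.

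Identifying $(L_I,\Pi_{+,I})$ with $(\mathbb{C},\Pi_+)$, each $F_{A'}$ then satisfies the hypotheses of the converse classical Paley--Wiener theorem for $H^p(\Pi_+)$, $1\le p\le 2$, so $F_{A'}\in H^p(\Pi_{+,I})$. Using the subadditivity $(\sum a_i)^{p/2}\le\sum a_i^{p/2}$ valid for $p\le 2$ together with $|f|^2=\sum_{A'}|F_{A'}|^2$, we obtain the pointwise bound $|f|^p\le\sum_{A'}|F_{A'}|^p$, whence
\[
\sup_{u>0}\int_{\mathbb{R}}|f(u+Iv)|^p\,dv\le\sum_{A'}\sup_{u>0}\int_{\mathbb{R}}|F_{A'}(u+Iv)|^p\,dv<\infty.
\]

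To transfer the bound from $L_I$ to every other slice we invoke the Representation Formula (Proposition~\ref{prop-RF}), which supplies the pointwise estimate $|f(u+Jv)|^2\le\frac{1}{2}\bigl[|f(u+Iv)|^2+|f(u-Iv)|^2\bigr]$ for all $J\in\mathbb{S}$ (the same estimate already used in the Hardy-space proposition of this section). Consequently $|f(u+Jv)|^p\le 2^{p/2-1}\bigl[|f(u+Iv)|^p+|f(u-Iv)|^p\bigr]$, and integrating in $v$ with the change of variable $v\mapsto -v$ gives
\[
\int_{\mathbb{R}}|f(u+Jv)|^p\,dv\le 2^{p/2}\int_{\mathbb{R}}|f(u+Iv)|^p\,dv.
\]
Taking $\sup_{u>0}$ and then $\sup_{J\in\mathbb{S}}$, and passing to NTBLs via Fatou's lemma, yields $\sup_{J\in\mathbb{S}}\int_{\mathbb{R}}|f(Jv)|^p\,dv<\infty$, which is precisely $f\in H^p_{\text{slice}}(\mathbb{R}^{n+1}_+)$. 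The main obstacle is this final transfer: one must control the $L^p$-norm along every slice $L_J$ simultaneously --- not just along $L_I$ --- and pass coherently to boundary values; the pointwise Representation Formula is exactly the tool that makes this uniform control possible.
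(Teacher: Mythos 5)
Your overall strategy---split $f|_{I\mathbb{R}}$ into $L_I$-valued components via the orthonormal basis $\{I_{A'}\}$, run the one-slice Paley--Wiener theorem on each $F_{A'}$, and then pass to all slices via the Representation Formula---is exactly the route the paper takes. However, there is a gap in the middle of your argument that the paper handles with an explicit construction and that you cannot skip.

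The theorem only supplies boundary data $f|_{I\mathbb{R}}$ on a single slice; nothing in the hypothesis asserts that a slice monogenic $f$ on $\mathbb{R}^{n+1}_+$ already exists. Yet you invoke the Splitting Lemma (Proposition~\ref{prop-SL}) ``on $\Pi_{+,I}$'' and later the Representation Formula (Proposition~\ref{prop-RF}) on $\mathbb{R}^{n+1}_+$, both of which are statements about a slice monogenic function already defined on the relevant open set. The bridge you need is what the paper does explicitly: first Lemma~\ref{PW-PILI-FT-Hp} produces, for each component, the Poisson-integral extension $F_{A'}(u+Iv)=\int_{\mathbb R}\mathscr{P}(u,v-w)F_{A'}(Iw)\,\mathrm{d}w \in H^p_{\text{slice}}(\Pi_{+,I})$; then one sets $f_I=\sum_{A'}F_{A'}I_{A'}$ on $\Pi_{+,I}$ and applies the Extension Lemma (Proposition~\ref{prop-ext}) to define $\operatorname{\mathbf{ext}}(f_I)$ on all of $\mathbb{R}^{n+1}_+$ and to \emph{verify that it is left slice monogenic}. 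Only after this construction can one evaluate the $L^p$-norm of the boundary trace $f(Jv)=\frac{1}{2}[f(-Iv)+f(Iv)]+\frac{JI}{2}[f(-Iv)-f(Iv)]$ for a general $J\in\mathbb{S}$. Without constructing the extension you have no function on which the Representation Formula applies, and you also cannot conclude $f\in\mathcal{SM}_l(\mathbb{R}^{n+1}_+)$, which is part of the definition of $H^p_{\text{slice}}(\mathbb{R}^{n+1}_+)$. (There is also a secondary worry: if $f$ were merely assumed holomorphic on $\Pi_{+,I}$ with the stated boundary data, the classical converse Paley--Wiener theorem produces the Poisson extension, and one must still argue why that Poisson extension coincides with the given $F_{A'}$; the paper avoids this by \emph{defining} $F_{A'}$ to be the Poisson integral.)

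Finally, your constants in the transfer step are off: from $|f(u+Jv)|^2\le\frac12\bigl[|f(u+Iv)|^2+|f(u-Iv)|^2\bigr]$ together with subadditivity of $t\mapsto t^{p/2}$ one gets $|f(u+Jv)|^p\le 2^{-p/2}\bigl[|f(u+Iv)|^p+|f(u-Iv)|^p\bigr]$, hence $\int_{\mathbb R}|f(u+Jv)|^p\,\mathrm{d}v\le 2^{1-p/2}\int_{\mathbb R}|f(u+Iv)|^p\,\mathrm{d}v$, not $2^{p/2-1}$ and $2^{p/2}$ as you wrote. This does not affect the conclusion, but the exponent should be corrected.
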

	
	In order to prove Theorem \ref{PW-Rn+1Rn-Hp-FT}, we need the following results.
	\begin{lemma}\label{PW-PILI-H2-FT}
		For any fixed $I\in\mathbb{S}$, let $f:\Pi_{+,I}\to L_{I}$ and $f\in H_{\text{slice}}^{2}\lr{\Pi_{+,I}}$. Then there exists a function $G:\mathbb{R}\to L_{I}$ such that $\supp G\subset\left(-\infty,0\right]$, $G\in L^2(-\infty,0)$ and 
		\begin{equation*}
			f\lr{z}=\frac{1}{\sqrt{2\pi}}\int_{-\infty}^{0}e^{zw}G\lr{w}\mathrm{d}w,z\in\Pi_{+,I}.
		\end{equation*}
		Moreover, $G$ is the 1DLCFT of $f|_{I\mathbb{R}}$
		where $f|_{I\mathbb{R}}$ is the NTBL of $f$.
	\end{lemma}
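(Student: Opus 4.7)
The plan is to reduce the lemma to the classical $L^{2}$ Paley-Wiener theorem on the complex right half-plane by exploiting the algebra isomorphism $L_{I}\cong\mathbb{C}$ already noted in \S 2.

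First I would fix the $\mathbb{R}$-algebra isomorphism $\varphi:L_{I}\to\mathbb{C}$ given by $\varphi(u+Iv)=u+iv$. Since $I^{2}=-1$, this is a genuine (commutative) algebra isomorphism. It carries $\Pi_{+,I}$ bijectively onto the classical right half-plane $\Pi_{+}=\{z\in\mathbb{C}:\mathrm{Re}(z)>0\}$ and intertwines $\overline{\partial_{I}}=\frac{1}{2}(\partial_{u}+I\partial_{v})$ with the ordinary Cauchy-Riemann operator $\frac{1}{2}(\partial_{u}+i\partial_{v})$. Setting $\widetilde{f}:=\varphi\circ f\circ\varphi^{-1}$, the hypotheses of the lemma become precisely that $\widetilde{f}$ is a complex-valued holomorphic function on $\Pi_{+}$ with $\int_{\mathbb{R}}|\widetilde{f}(iy)|^{2}\mathrm{d}y<\infty$, i.e.\ $\widetilde{f}\in H^{2}(\Pi_{+})$.

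Next I would invoke the classical Paley-Wiener theorem on the right half-plane, which follows from the upper half-plane version recalled in the introduction via the rotation $w\mapsto -iw$. It produces $\widetilde{G}\in L^{2}(\mathbb{R})$ with $\supp\widetilde{G}\subset(-\infty,0]$ such that
\[\widetilde{f}(z)=\frac{1}{\sqrt{2\pi}}\int_{-\infty}^{0}e^{zw}\widetilde{G}(w)\mathrm{d}w,\qquad z\in\Pi_{+},\]
where $\widetilde{G}$ is the classical Fourier transform of the non-tangential boundary values $y\mapsto\widetilde{f}(iy)$. Define now $G(w):=\varphi^{-1}(\widetilde{G}(w))$; then $G$ is $L_{I}$-valued, vanishes off $(-\infty,0]$ and lies in $L^{2}(-\infty,0)$. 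Because $\varphi$ is an algebra isomorphism, it commutes with the exponential and with Bochner integration, so applying $\varphi^{-1}$ to the representation above yields
\[f(z)=\frac{1}{\sqrt{2\pi}}\int_{-\infty}^{0}e^{zw}G(w)\mathrm{d}w,\qquad z\in\Pi_{+,I}.\]
Letting $z=u+Iv$ tend non-tangentially to $Iv$ as $u\to 0^{+}$, the resulting formula for $f(Iv)$ is precisely the inverse 1DLCFT of $G$; by the inversion formula recalled in \S 2 this identifies $G=\mathscr{F}_{I}(f|_{I\mathbb{R}})$.

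The only delicate bookkeeping is checking that $\varphi$ really does intertwine the 1DLCFT $\mathscr{F}_{I}$ with the ordinary complex Fourier transform, so that the support condition and the $L^{2}$-bound on $\widetilde{G}$ transfer verbatim to $G$; this is immediate from the algebra-isomorphism property once it is written down. In particular there is no genuinely new slice-analytic obstacle here: the lemma is essentially a rewriting of the one-variable Paley-Wiener theorem, and the substantive slice content of the paper will only enter at the next step, when Theorem \ref{PW-Rn+1Rn-Hp-FT} promotes this per-slice representation to all of $\mathbb{R}^{n+1}_{+}$ via the Representation Formula.
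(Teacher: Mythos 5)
Your proposal is correct, but it proceeds by a genuinely different route than the paper. The paper adapts Rudin's proof of the classical half-plane Paley-Wiener theorem (Theorem 19.2 in \emph{Real and Complex Analysis}) to the $L_I$-valued setting: it integrates $e^{-wz}f(z)$ over rectangular contours, shows that the boundary contributions vanish along a subsequence $\delta_n\to\infty$, deduces $e^{-w\epsilon_2}\mathscr{F}_I(f_{\epsilon_2})(w)=e^{-w\epsilon_1}\mathscr{F}_I(f_{\epsilon_1})(w)$ a.e., and from there constructs $G$ by hand and verifies the support and $L^2$ properties. You instead observe that for a fixed slice the whole apparatus is a relabeling of complex analysis: the map $\varphi:L_I\to\mathbb{C}$, $u+Iv\mapsto u+iv$, is an isometric algebra isomorphism that intertwines $\overline{\partial_I}$ with $\overline{\partial}$ and $\mathscr{F}_I$ with the classical Fourier transform (the 1DLCFT uses left multiplication by $e^{-Iuw}$, but $L_I$ is commutative, so this is harmless), and it commutes with Bochner integration because it is a continuous linear map between finite-dimensional spaces. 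Applying $\varphi$, invoking the classical Paley-Wiener theorem for $H^2$ of the right half-plane, and applying $\varphi^{-1}$ yields the lemma verbatim, including the identification $G=\mathscr{F}_I(f|_{I\mathbb{R}})$, since the classical theorem already identifies $\widetilde G$ as the Fourier transform of the boundary values. Your route is shorter and conceptually sharper — it makes explicit that the per-slice statement carries no new slice-theoretic content — at the price of quoting the half-plane Paley-Wiener theorem as a black box; the paper's route is more self-contained and mirrors the technique it later reuses in the Hardy and Bergman arguments. One small tightening worth making: the final sentence of your argument about letting $u\to 0^+$ is unnecessary (and would require a justification for passing the limit under the integral), since the identification of $G$ with the 1DLCFT of the boundary value is already supplied by the classical theorem before you apply $\varphi^{-1}$; you can simply drop that step.
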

	Based on the argument in  \cite[Theorem 19.2]{rudin1987real}, we immediately obtain Lemma \ref{PW-PILI-H2-FT}. For completeness, we include a brief proof.
	\begin{proof}
		For any given $\epsilon_{1},\epsilon_{2}\in\lr{0,\infty}$ with $\epsilon_{1}>\epsilon_{2}$. For each $\delta>0$, let $\Box_{\delta}$ be the counterclockwise rectangular path with vertices $\epsilon_{1}\pm I\delta$ and $\epsilon_{2}\pm I\delta$. Cauchy's theorem implies that 
		\begin{equation}\label{PW-PILI-H2-FT-Cef}
			\int_{\Box_{\delta}}e^{-wz}f\lr{z}\mathrm{d}z=0.
		\end{equation}
		In the following, we assume $w\in\mathbb{R}$. Let
		\[\Psi\lr{\delta}=\int_{\epsilon_{1}+I\delta}^{\epsilon_{2}+I\delta}e^{-wz}f(z)\mathrm{d}z=\int_{\epsilon_{1}}^{\epsilon_{2}}e^{-w\lr{u+I\delta}}f\lr{u+I\delta}\mathrm{d}u.\]
		By the Cauchy-Schwarz inequality, we get
		\begin{align}\label{PW-PILI-H2-FT-abs(Psi)2}
			\begin{split}
				\abs{\Psi\lr{\delta}}^2&=\Big|\int_{\epsilon_{1}}^{\epsilon_{2}}e^{-w\lr{u+I\delta}}f\lr{u+I\delta}\mathrm{d}u\Big|^2\\
				&\leq\int_{\epsilon_{1}}^{\epsilon_{2}}e^{-2wu}\mathrm{d}u\int_{\epsilon_{1}}^{\epsilon_{2}}\abs{f\lr{u+I\delta}}^2\mathrm{d}u.
			\end{split}
		\end{align}
		Put
		\[\Phi\lr{\delta}=\int_{\epsilon_{1}}^{\epsilon_{2}}\abs{f\lr{u+I\delta}}^2\mathrm{d}u.\]
		Using $f\in H_{\text{slice}}^2\lr{\Pi_{+,I}}$ and Fubini's theorem, we obtain
		\[\int_{\mathbb{R}}\Phi\lr{\delta}\mathrm{d}\delta\leq \text{Constant}.\]
		Thus, there exists a sequence $\LR{\delta_{n}}$ such that $\delta_{n}\to\infty$ as $n\to\infty$ and 
		\[\lim_{n\to\infty}\Lr{\Phi\lr{-\delta_{n}}+\Phi\lr{\delta_{n}}}=0.\]
		It follows from \eqref{PW-PILI-H2-FT-abs(Psi)2} that 
		\begin{equation}\label{PW-PILI-H2-FT-Psi(infty)}
			\lim_{n\to\infty}\Psi\lr{-\delta_{n}}=0,\quad \lim_{n\to\infty}\Psi\lr{\delta_{n}}=0.
		\end{equation}
		Note that \eqref{PW-PILI-H2-FT-Psi(infty)} holds for every $w$ and the sequence $\LR{\delta_{n}}$ does not depend on $w$.
		
		Notice that
		\begin{align*}
			\int_{\epsilon_{2}-I\delta}^{\epsilon_{2}+I\delta}e^{-wz}f\lr{z}\mathrm{d}z=&Ie^{-w\epsilon_{2}}\int_{-\delta}^{\delta}e^{-Iwv}f\lr{\epsilon_{2}+Iv}\mathrm{d}v
		\end{align*}
		and
		\begin{align*}
			\int_{\epsilon_{1}+I\delta}^{\epsilon_{1}-I\delta}e^{-wz}f\lr{z}\mathrm{d}z=&-Ie^{-w\epsilon_{1}}\int_{-\delta}^{\delta}e^{-Iwv}f\lr{\epsilon_{1}+Iv}\mathrm{d}v.
		\end{align*}
		Let $f_{u}\lr{Iv}=f\lr{u+Iv}$ and let
		\begin{equation*}
			g_{n}\lr{u,w}=\frac{1}{\sqrt{2\pi}}\int_{-\delta_{n}}^{\delta_{n}}e^{-Iwv}f_{u}\lr{Iv}\mathrm{d}v,\quad u>0.
		\end{equation*}
		Then we conclude from \eqref{PW-PILI-H2-FT-Cef} and \eqref{PW-PILI-H2-FT-Psi(infty)} that
		\begin{equation}\label{PW-PILI-H2-FT-gx-ge}
			\lim_{n\to\infty}\Lr{e^{-w\epsilon_{2}}g_{n}\lr{\epsilon_{2},w}-e^{-w\epsilon_{1}}g_{n}\lr{\epsilon_{1},w}}=0,\quad w\in\mathbb{R}.
		\end{equation}
		Using $f_{u}\lr{Iv}\in H_{\text{slice}}^2\lr{\Pi_{+,I}}$ and Plancherel's theorem, we have
		\begin{equation*}
			\lim_{n\to\infty}\int_{\mathbb{R}}\Big|\FT{I}{f_{u}}\lr{w}-g_{n}\lr{u,w}\Big|^2\mathrm{d}w=0.
		\end{equation*}
		By invoking the corresponding result in \cite[Theorem 3.12]{rudin1987real}, there exists a subsequence $\LR{g_{n_{k}}\lr{u,w}}$ such that $\LR{g_{n_{k}}\lr{u,w}}$ converges pointwise to $\FT{I}{f_{u}}\lr{w}$ almost everywhere $w$. Thus, by \eqref{PW-PILI-H2-FT-gx-ge}, we get
		\[e^{-w\epsilon_{2}}\FT{I}{f_{\epsilon_{2}}}\lr{w}=e^{-w\epsilon_{1}}\FT{I}{f_{\epsilon_{1}}}\lr{w}.\]
		holds almost everywhere $w\in\mathbb{R}$. If we define $G\lr{w}:=e^{-w\epsilon_{1}}\FT{I}{f_{\epsilon_{1}}}\lr{w}$, by the arbitrariness of $\epsilon_{1},\epsilon_{2}$, we can obtain that for every $u\in\lr{0,\infty}$, 
		\begin{equation}\label{FTfu}
			G\lr{w}=e^{-wu}\FT{I}{f_{u}}\lr{w}
		\end{equation}
		holds almost everywhere $w\in\mathbb{R}$. Hence, $G\lr{w}$ is independent of $u$.
		
		Using \eqref{FTfu} and Plancherel's theorem, we have
		\begin{equation}\label{PW-PILI-H2-FT-e2wuFTf}
			\int_{\mathbb{R}}e^{2wu}\abs{G\lr{w}}^2\mathrm{d}w=\int_{\mathbb{R}}\abs{\FT{I}{f_{u}}\lr{w}}^2\mathrm{d}w=\int_{\mathbb{R}}\abs{f_{u}\lr{v}}^2\mathrm{d}v<\infty.
		\end{equation}
		If $u\to\infty$, we can conclude from \eqref{PW-PILI-H2-FT-e2wuFTf} that $\FT{I}{f|_{I\mathbb{R}}}\lr{w}=0$ holds almost everywhere $w\in\lr{0,\infty}$.
		And if $u\to0$, \eqref{PW-PILI-H2-FT-e2wuFTf} shows that 
		\begin{equation*}
			\int_{-\infty}^{0}\abs{G\lr{w}}^2\mathrm{d}w<\infty.
		\end{equation*}
		It follows from \eqref{FTfu} that $\FT{I}{f_{u}}\lr{w}\in L^{1}(\mathbb{R})$. Hence, we get
		\begin{align*}
			f\lr{u+Iv}=&f_{u}\lr{Iv}\\
			=&\frac{1}{\sqrt{2\pi}}\int_{\mathbb{R}}e^{Ivw}\FT{I}{f_{u}}\lr{w}\mathrm{d}w\\
			=&\frac{1}{\sqrt{2\pi}}\int_{-\infty}^{0}e^{Ivw}e^{wu}G\lr{w}\mathrm{d}w\\
			=&\frac{1}{\sqrt{2\pi}}\int_{-\infty}^{0}e^{\lr{u+Iv}w}G\lr{w}\mathrm{d}w.
		\end{align*}
		Moreover, we conclude that $G$ is the 1DLCFT of $f|_{I\mathbb{R}}$ where $f|_{\mathbb{R}}$ is the NTBL of $f$. The proof is similar to Lemma \ref{PW-PILI-FT-Hp}.
	\end{proof}
	\begin{lemma}\label{lemma-supabs-Hpnorm}
		For any fixed $I\in\mathbb{S}$, let $f:\Pi_{+,I}\to L_{I}$. Suppose $f\in H_{\text{slice}}^{p}\left(\Pi_{+,I}\right),0<p\leq\infty$, and $\Pi_{+,I,x_0}=\left\{x_0+z:x_0>0,z\in\Pi_{+,I}\right\}\subset\Pi_{+,I}$, then there exists a constant $C=C\left(x_0,p\right)$, depending on $\left(x_0,p\right)$ but not on $f$, such that
		\[\sup_{z\in\Pi_{+,I,x_0}}|f\left(z\right)|\leq C\lVert f\rVert_{H_{\text{slice}}^{p}\left(\Pi_{+,I}\right)}.\]
	\end{lemma}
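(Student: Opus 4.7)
The plan is to reduce the statement to the classical half-plane Hardy-space pointwise estimate. Since $f:\Pi_{+,I}\to L_{I}$ and $L_{I}\cong\mathbb{C}$ as a field, the restriction $f_{I}$ is, under this identification, a genuinely holomorphic function on the right half-plane. Consequently $|f|^{p}$ is subharmonic on $\Pi_{+,I}$ for every $0<p<\infty$, and the case $p=\infty$ is immediate from the definition, so I only need to treat $0<p<\infty$.

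First I fix $z=u+Iv\in\Pi_{+,I,x_{0}}$, so that $u>x_{0}$. Choosing the radius $r=x_{0}/2$, the open disk $D(z,r)\subset L_{I}$ is contained in $\Pi_{+,I}$ because $r<u$. The sub-mean-value inequality for subharmonic functions gives
\[
|f(z)|^{p}\leq\frac{1}{\pi r^{2}}\int_{D(z,r)}|f(w)|^{p}\,dA(w).
\]
Next I estimate the area integral by Fubini, first extending the $v'$-range from $(v-r,v+r)$ to all of $\mathbb{R}$:
\[
\int_{D(z,r)}|f(w)|^{p}\,dA(w)\leq\int_{u-r}^{u+r}\!\int_{\mathbb{R}}|f(u'+Iv')|^{p}\,dv'\,du'.
\]
For each fixed $u'\in(u-r,u+r)\subset(0,\infty)$, the inner integral is at most $\sup_{s>0}\int_{\mathbb{R}}|f(s+Iv')|^{p}\,dv'$, which is exactly $\|f\|_{H_{\text{slice}}^{p}(\Pi_{+,I})}^{p}$ (this is the standard identity that the supremum of the $L^{p}$-norms along the vertical lines in $\Pi_{+,I}$ equals the $L^{p}$-norm of the non-tangential boundary trace, obtained by subharmonicity of $|f|^{p}$ together with Fatou's lemma, paralleling the displayed $p=2$ identity given in the excerpt). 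Combining the two bounds yields
\[
|f(z)|^{p}\leq\frac{1}{\pi r^{2}}\cdot 2r\cdot\|f\|_{H_{\text{slice}}^{p}(\Pi_{+,I})}^{p}=\frac{4}{\pi x_{0}}\|f\|_{H_{\text{slice}}^{p}(\Pi_{+,I})}^{p},
\]
and taking $p$-th roots and then the supremum over $z\in\Pi_{+,I,x_{0}}$ gives the desired bound with $C(x_{0},p)=\bigl(4/(\pi x_{0})\bigr)^{1/p}$.

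There is essentially no serious obstacle here; the only delicate point is the identification of $\|f\|_{H_{\text{slice}}^{p}(\Pi_{+,I})}$ with the supremum of the vertical-line $L^{p}$-integrals, but this follows word-for-word from the classical argument once one has observed that $f_{I}$ is a scalar-valued holomorphic function on the half-plane via $L_{I}\cong\mathbb{C}$. All the work is really the well-known subharmonic mean-value estimate, transported to the slice $\Pi_{+,I}$.
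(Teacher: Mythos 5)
Your proof is correct and follows the standard subharmonicity argument that the paper delegates to \cite[Proposition 1.3]{bekolle2003lecture}: after identifying $L_I \cong \mathbb{C}$, one applies the sub-mean-value inequality for $|f|^p$ on a disk of radius proportional to $x_0$, then enlarges the area integral via Fubini and controls each vertical slice by the Hardy norm. The constant you extract, $C(x_0,p) = (4/(\pi x_0))^{1/p}$, is consistent with this method.
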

	\begin{proof}
		The proof is similar to that given in \cite[Proposition 1.3]{bekolle2003lecture}.
	\end{proof}
	\begin{lemma}\label{lemma-fFT(g)-FT(f)g}
		For any fixed $I\in\mathbb{S}$, let $f,\varphi:L_{I}\to L_{I}$. If $f|_{\mathbb{R}}\in L^p(\mathbb{R})$, $\varphi|_{\mathbb{R}}\in L^p(\mathbb{R})\cap L^{1}(\mathbb{R})$, $1\leq p\leq2$, then
		\begin{equation*}
			\int_{\mathbb{R}}f\left(t\right)\mathscr{F}_I\left(\varphi|_{\mathbb{R}}\right)\left(t\right)\mathrm{d}t=\int_{\mathbb{R}}\mathscr{F}_I\left(f|_{\mathbb{R}}\right)\left(t\right)\varphi\left(t\right)\mathrm{d}t.
		\end{equation*}
	\end{lemma}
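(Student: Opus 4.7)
The strategy is to reduce the identity to Fubini's theorem, exploiting the fact that $f$, $\varphi$, and the integral kernel $e^{-Iut}$ all take values in the commutative subalgebra $L_I\cong\mathbb{C}$, so all factors can be rearranged freely. Writing both sides by the definition of the 1DLCFT, the desired equality becomes
\begin{equation*}
\frac{1}{\sqrt{2\pi}}\int_{\mathbb{R}}\int_{\mathbb{R}}f(t)\,e^{-Iut}\varphi(u)\,du\,dt \;=\; \frac{1}{\sqrt{2\pi}}\int_{\mathbb{R}}\int_{\mathbb{R}}e^{-Iut}f(u)\,\varphi(t)\,du\,dt,
\end{equation*}
which, after relabeling $(t,u)\leftrightarrow(u,t)$ on the right-hand side, is exactly the swap of the order of integration.

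I would first dispatch the case $p=1$ directly: since $f,\varphi\in L^1(\mathbb{R})$, the double integrand is dominated in absolute value by $|f(t)||\varphi(u)|$, whose integral over $\mathbb{R}^2$ equals $\|f\|_{L^1}\|\varphi\|_{L^1}<\infty$. Classical Fubini then yields the swap immediately.

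For $1<p\leq 2$ the function $f$ need not lie in $L^1(\mathbb{R})$, so a truncation argument is needed. Set $f_n(t):=f(t)\chi_{[-n,n]}(t)$; each $f_n$ has compact support and lies in $L^p\cap L^1$, so the $p=1$ case applies to $(f_n,\varphi)$ and gives
\begin{equation*}
\int_{\mathbb{R}} f_n(t)\,\mathscr{F}_I(\varphi|_{\mathbb{R}})(t)\,dt = \int_{\mathbb{R}} \mathscr{F}_I(f_n|_{\mathbb{R}})(t)\,\varphi(t)\,dt.
\end{equation*}
By dominated convergence $f_n\to f$ in $L^p$. The Hausdorff--Young inequality gives $\mathscr{F}_I(\varphi|_{\mathbb{R}})\in L^q$ with $1/p+1/q=1$, so Hölder's inequality shows the left-hand side converges to $\int f\,\mathscr{F}_I(\varphi|_{\mathbb{R}})\,dt$. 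For the right-hand side, Hausdorff--Young again yields $\|\mathscr{F}_I(f_n)-\mathscr{F}_I(f)\|_{L^q}\leq\|f_n-f\|_{L^p}\to 0$, and Hölder against $\varphi\in L^p$ gives convergence to $\int \mathscr{F}_I(f|_{\mathbb{R}})\varphi\,dt$. Combining the two limits delivers the lemma.

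The Clifford-algebra structure introduces no real obstacle here, precisely because all integrands take values in the commutative subring $L_I$, so the argument is a clean adaptation of the complex-variable one. The only technically delicate point is the failure of direct Fubini when $p>1$, which is circumvented by the truncation plus Hausdorff--Young/Hölder limit passage described above.
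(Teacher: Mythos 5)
Your proof is correct, and it follows the standard route — Fubini in the $L^1$ case, then truncation $f_n = f\chi_{[-n,n]}$ combined with Hausdorff--Young and H\"older to pass to the limit for $1<p\le 2$ — which is exactly what the reference \cite{li2018fourier} (to which the paper defers without reproducing details) does in the tube-domain setting. Your explicit observation that all factors $f$, $\varphi$, $e^{-Iut}$ land in the commutative subalgebra $L_I\cong\mathbb{C}$, so that the left-sided kernel causes no noncommutativity obstruction, is the one point genuinely specific to the Clifford setting and is worth spelling out as you did.
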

	\begin{proof}
		The proof is similar to that given in \cite{li2018fourier}. 
	\end{proof}
	\begin{lemma}\label{PW-PILI-Hp-FT}
		For any fixed $I\in\mathbb{S}$, let $f:\Pi_{+,I}\to L_{I}$. If $f\in H_{\text{slice}}^{p}\left(\Pi_{+,I}\right)$, $1\leq p\leq2$, then $\supp\FT{I}{f|_{I\mathbb{R}}}\subset\left(-\infty,0\right]$  where $f|_{I\mathbb{R}}$ is the NTBL of $f$. Moreover, for all $z\in\Pi_{+,I}$, we have
		\begin{align*}
			f\left(z\right)=&\frac{1}{\sqrt{2\pi}}\int_{-\infty}^{0}e^{zw}\mathscr{F}_I\left(f|_{I\mathbb{R}}\right)\left(w\right)\mathrm{d}w.
		\end{align*}
		
	\end{lemma}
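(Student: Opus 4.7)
The strategy is to reduce to the $p=2$ case established in Lemma \ref{PW-PILI-H2-FT} by a translation-and-limit argument; since $p=2$ is already done I may assume $1\le p<2$. For $\delta>0$ set $f_\delta(z):=f(z+\delta)$. Lemma \ref{lemma-supabs-Hpnorm} applied with parameter $\delta/2$ yields $|f(z)|\le M:=C(\delta/2,p)\norm{f}_{H^p_{\text{slice}}(\Pi_{+,I})}$ on $\{\Re z\ge\delta/2\}$, hence $|f_\delta|\le M$ on all of $\Pi_{+,I}$. Combining this with the obvious bound $\sup_{u>0}\int_\mathbb{R}|f_\delta(u+Iv)|^p\,dv\le\norm{f}^p_{H^p_{\text{slice}}(\Pi_{+,I})}$ and the pointwise interpolation $|f_\delta|^2\le M^{2-p}|f_\delta|^p$ gives $f_\delta\in H^2_{\text{slice}}(\Pi_{+,I})$.

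Applying Lemma \ref{PW-PILI-H2-FT} to $f_\delta$ and writing $g_\delta:=\FT{I}{f_\delta|_{I\mathbb{R}}}$, one obtains $\supp g_\delta\subset(-\infty,0]$ together with
\begin{equation*}
f_\delta(z)=\frac{1}{\sqrt{2\pi}}\int_{-\infty}^0 e^{zw}g_\delta(w)\,dw,\quad z\in\Pi_{+,I}.
\end{equation*}
The next step is to pass to the limit $\delta\to 0^+$. Since $\Pi_{+,I}$ is isomorphic to the classical right half-plane via $L_I\cong\mathbb{C}$, the standard Hardy space theory yields $f_\delta|_{I\mathbb{R}}\to f|_{I\mathbb{R}}$ in $L^p(\mathbb{R})$, whence by the Hausdorff-Young inequality $g_\delta\to g:=\FT{I}{f|_{I\mathbb{R}}}$ in $L^q(\mathbb{R})$ with $q:=p/(p-1)$ (or in $L^\infty$ when $p=1$). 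Extracting a pointwise a.e.\ convergent subsequence and using that each $g_\delta$ vanishes on $(0,\infty)$, we conclude that $g=0$ a.e.\ on $(0,\infty)$, i.e., $\supp g\subset(-\infty,0]$.

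For the representation formula, fix $z\in\Pi_{+,I}$. Since $|e^{zw}|^p=e^{p\Re(z)w}$ is integrable on $(-\infty,0)$, the function $w\mapsto e^{zw}\mathbf{1}_{(-\infty,0]}(w)$ belongs to $L^p(\mathbb{R})$, and Hölder's inequality gives
\begin{equation*}
\Abs{\int_{-\infty}^0 e^{zw}\lr{g_\delta(w)-g(w)}\,dw}\le\norm{e^{z\cdot}}_{L^p(-\infty,0)}\norm{g_\delta-g}_{L^q(\mathbb{R})}\to 0.
\end{equation*}
Combined with $f(z+\delta)\to f(z)$ by continuity of the (one-slice) holomorphic function $f$ on $\Pi_{+,I}$, the desired representation follows upon passing to the limit in the $H^2$ formula for $f_\delta$.

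The main obstacle is justifying the $L^p$-convergence $f_\delta|_{I\mathbb{R}}\to f|_{I\mathbb{R}}$, which rests on NTBL existence together with a Hardy-Littlewood maximal function bound. These facts transfer verbatim from the classical Hardy space theory on $\mathbb{C}_+$ via the identification $L_I\cong\mathbb{C}$, but a clean writeup may wish to isolate this transfer as a separate preliminary lemma (e.g., via a Poisson integral representation on $\Pi_{+,I}$).
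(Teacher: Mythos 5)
Your proof is correct and follows essentially the same route as the paper: translate by $\delta>0$, use Lemma \ref{lemma-supabs-Hpnorm} plus the pointwise interpolation $|f_\delta|^2\le M^{2-p}|f_\delta|^p$ to land in $H^2_{\text{slice}}(\Pi_{+,I})$, invoke Lemma \ref{PW-PILI-H2-FT}, and pass to the limit $\delta\to 0$ using Hausdorff--Young. The only departures are cosmetic: you deduce $\supp\FT{I}{f|_{I\mathbb{R}}}\subset(-\infty,0]$ via an a.e.\ convergent subsequence, whereas the paper pairs against Schwartz test functions supported on $[0,\infty)$; and you obtain the representation formula by passing to the limit in the $H^2$ identity for $f_\delta$ (controlling the integral with Hölder since $e^{z\cdot}\in L^p(-\infty,0)$), whereas the paper defines $g$ by the integral formula outright and verifies $g=f$ on each horizontal line $u=u_0$. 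Both sets of choices are standard and equivalent.
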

	\begin{proof}
		The proof is similar to that given in \cite{li2018fourier}. For any given $\epsilon>0$, let us define
		\[f_{\epsilon}(z)=f\lr{\epsilon+z}\]
		for $z\in\Pi_{+,I}$. By Lemma \ref{lemma-supabs-Hpnorm}, we have that there exists a positive number $C=C(\epsilon,p)$ such that
		\[\sup_{z\in\Pi_{+,I}}\abs{f_{\epsilon}\lr{z}}\leq C\norm{f}_{H_{\text{slice}}^{p}\left(\Pi_{+,I}\right)}<\infty.\]
		It follows that for any $u>0$,
		\begin{align*}
			\int_{\mathbb{R}}\abs{f_{\epsilon}\lr{u+Iv}}^{2}\mathrm{d}v=&\int_{\mathbb{R}}\abs{f_{\epsilon}\lr{u+Iv}}^{p}\abs{f_{\epsilon}\lr{u+Iv}}^{2-p}\mathrm{d}v\\
			\leq&\lr{C\norm{f}_{H_{\text{slice}}^{p}\left(\Pi_{+,I}\right)}}^{2-p}\norm{f_{\epsilon}|_{I\mathbb{R}}}_{L^{p}(\mathbb{R})}^{p}<\infty.
		\end{align*}
		Thus, we have $f_{\epsilon}\lr{z}\in H_{\text{slice}}^{2}\lr{\Pi_{+,I}}$. By Lemma \ref{PW-PILI-H2-FT}, we get 
		\begin{equation}\label{PW-PILI-Hp-FT-suppfdelta}
			\supp\FT{I}{f_{\epsilon}|_{I\mathbb{R}}}\subset\left(-\infty,0\right].
		\end{equation}
		Using Hausdorff-Young's inequality, we obtain that
		\begin{equation}\label{lq<lp}
			\norm{\FT{I}{f_{\epsilon}|_{I\mathbb{R}}}-\FT{I}{f|_{I\mathbb{R}}}}_{L^{q}(\mathbb{R})}\leq\norm{f_{\epsilon}|_{I\mathbb{R}}-f|_{I\mathbb{R}}}_{L^{p}(\mathbb{R})}
		\end{equation}
		holds for $1\leq p\leq2$, where $\frac{1}{p}+\frac{1}{q}=1$.
		Since 
		\[\lim_{\epsilon\to0}\norm{f_{\epsilon}|_{I\mathbb{R}}-f|_{I\mathbb{R}}}_{L^{p}(\mathbb{R})}=0,\]
		we have 
		\[\lim_{\epsilon\to0}\norm{\FT{I}{f_{\epsilon}|_{I\mathbb{R}}}-\FT{I}{f|_{I\mathbb{R}}}}_{L^{q}(\mathbb{R})}=0.\]
		Owing to \eqref{PW-PILI-Hp-FT-suppfdelta} and \eqref{lq<lp}, for any rapidly decreasing functions $\varphi\in\mathscr{S}(\mathbb{R})$ satisfying $\supp\varphi\subset\left[0,\infty\right)$, we have
		\begin{align*}
			&\Abs{\int_{\mathbb{R}}\FT{I}{f|_{I\mathbb{R}}}\lr{w}\varphi\lr{w}\mathrm{d}w}\\
			=&\Abs{\int_{\mathbb{R}}\Lr{\FT{I}{f|_{I\mathbb{R}}}\lr{w}-\FT{I}{f_{\epsilon}|_{I\mathbb{R}}}\lr{w}}\varphi\lr{w}\mathrm{d}w+\int_{\mathbb{R}}\FT{I}{f_{\epsilon}|_{I\mathbb{R}}}\lr{w}\varphi\lr{w}\mathrm{d}w}\\
			=&\Abs{\int_{\mathbb{R}}\Lr{\FT{I}{f|_{I\mathbb{R}}}\lr{w}-\FT{I}{f_{\epsilon}|_{I\mathbb{R}}}\lr{w}}\varphi\lr{w}\mathrm{d}w}\\
			\leq&\norm{\FT{I}{f_{\epsilon}|_{I\mathbb{R}}}-\FT{I}{f|_{I\mathbb{R}}}}_{L^{q}(\mathbb{R})}\norm{\varphi}_{L^{p}(\mathbb{R})}.
		\end{align*}
		It follows that $\supp\FT{I}{f|_{I\mathbb{R}}}\subset\left(-\infty,0\right]$. Let 
		\begin{equation*}
			g(u+Iv)=\frac{1}{\sqrt{2\pi}}\int_{-\infty}^{0}e^{\lr{u+Iv}w}\FT{I}{f|_{I\mathbb{R}}}(w)\mathrm{d}w.
		\end{equation*}
		It is easy to show that $g$ is holomorphic on $\Pi_{+,I}$. Note that for fixed $u_{0}>0$, 
		\begin{align*}
			g(u_{0}+Iv)=&\frac{1}{\sqrt{2\pi}}\int_{-\infty}^{0}e^{\lr{u_{0}+Iv}w}\FT{I}{f|_{I\mathbb{R}}}(w)\mathrm{d}w\\
			=&\frac{1}{\sqrt{2\pi}}\int_{-\infty}^{0}e^{Ivw}e^{u_{0}w}\FT{I}{f|_{I\mathbb{R}}}(w)\mathrm{d}w\\
			=&\frac{1}{\sqrt{2\pi}}\int_{-\infty}^{0}e^{Ivw}\FT{I}{f\lr{u_{0}+Iv}}(w)\mathrm{d}w\\
			=&\frac{1}{\sqrt{2\pi}}\int_{-\infty}^{0}e^{Ivw}\FT{I}{f_{u_{0}}|_{I\mathbb{R}}}(w)\mathrm{d}w\\
			=&f_{u_{0}}\lr{Iv}=f(u_{0}+Iv).
		\end{align*}
		Hence, by $f\in H_{\text{slice}}^{p}\left(\Pi_{+,I}\right)$, we have 
		\begin{equation*}
			f\lr{u+Iv}=g\lr{u+Iv}=\frac{1}{\sqrt{2\pi}}\int_{-\infty}^{0}e^{\lr{u+Iv}w}\FT{I}{f|_{I\mathbb{R}}}(w)\mathrm{d}w.
		\end{equation*}
	\end{proof}
	\begin{lemma}\label{PW-NRn+1+-Hp-FT}
		If there exists a unit purely imaginary paravector $I\in\mathbb{S}$, such that $f\in H_{\text{slice}}^{p}\left(\Pi_{+,I}\right)\cap\mathcal{N}_{l}\left(\mathbb{R}_{+}^{n+1}\right)$, $1\leq p\leq2$. Then for every $J\in\mathbb{S}$, $\mathbf{supp}\mathscr{F}_{J}\left(f|_{J\mathbb{R}}\right)\subset\left(-\infty,0\right]$ and 
		\[f\left(u+Iv\right)=\frac{1}{\sqrt{2\pi}}\int_{-\infty}^{0}e^{\left(u+Iv\right)t}\mathscr{F}_{J}\left(f|_{J\mathbb{R}}\right)\left(t\right)\mathrm{d}t,\]
		where $f|_{J\mathbb{R}}$ is the NTBL of $f$.
	\end{lemma}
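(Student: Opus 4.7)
\textbf{Proof plan for Lemma \ref{PW-NRn+1+-Hp-FT}.} The plan is to leverage the slice representation on $L_I$ (already handled by Lemma \ref{PW-PILI-Hp-FT}) and then transport the Fourier representation from $L_I$ to every other slice $L_J$ by exploiting that $f\in\mathcal{N}_l\left(\mathbb{R}^{n+1}_+\right)$. First, since $f(\mathbb{R}^{n+1}_+\cap L_I)\subseteq L_I$ and $f\in H^p_{\text{slice}}\left(\Pi_{+,I}\right)$, applying Lemma \ref{PW-PILI-Hp-FT} to $f|_{\Pi_{+,I}}:\Pi_{+,I}\to L_I$ gives $\supp\FT{I}{f|_{I\mathbb{R}}}\subset(-\infty,0]$ and the integral representation on $\Pi_{+,I}$. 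Because $f|_{I\mathbb{R}}$ is $L_I$-valued, the transform $G:=\FT{I}{f|_{I\mathbb{R}}}$ splits uniquely as $G(w)=A(w)+IB(w)$ with $A,B$ real-valued and supported in $(-\infty,0]$.

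The key observation is that $A$ and $B$ are intrinsic to $f$ and do not depend on $I$. By Proposition \ref{prop-norm-alpha beta} and Proposition \ref{prop-RF}, since $f\in\mathcal{N}_l$ one has $f(u+Iv)=\alpha(u,v)+I\beta(u,v)$ with real-valued $\alpha,\beta$ that are independent of the imaginary unit. Expanding $e^{-Iuw}=\cos(uw)-I\sin(uw)$ in the definition of $\FT{I}{f|_{I\mathbb{R}}}$ and separating scalar and $I$-parts yields
\begin{align*}
A(w)&=\tfrac{1}{\sqrt{2\pi}}\int_{\mathbb{R}}[\alpha(0,u)\cos(uw)+\beta(0,u)\sin(uw)]\,\mathrm{d}u,\\
B(w)&=\tfrac{1}{\sqrt{2\pi}}\int_{\mathbb{R}}[\beta(0,u)\cos(uw)-\alpha(0,u)\sin(uw)]\,\mathrm{d}u,
\end{align*}
which are manifestly independent of $I$.

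Next, for any $J\in\mathbb{S}$, Proposition \ref{prop-RF} gives $f(Jv)=\alpha(0,v)+J\beta(0,v)$, and the same cosine/sine expansion (now with $J$ in place of $I$) immediately yields $\FT{J}{f|_{J\mathbb{R}}}(w)=A(w)+JB(w)$. In particular $\supp\FT{J}{f|_{J\mathbb{R}}}\subset(-\infty,0]$. To get the integral formula on $\Pi_{+,J}$, I compute
\begin{equation*}
\tfrac{1}{\sqrt{2\pi}}\int_{-\infty}^{0}e^{(u+Jv)w}\bigl(A(w)+JB(w)\bigr)\,\mathrm{d}w
\end{equation*}
by expanding $e^{Jvw}=\cos(vw)+J\sin(vw)$ and separating scalar and $J$-parts; setting $u=0$ first and using Fourier inversion on $\alpha(0,\cdot),\beta(0,\cdot)$ (which is justified since $\FT{I}{f_u}\in L^1$ for $u>0$ as in the proof of Lemma \ref{PW-PILI-H2-FT}), one recognizes the result as $\alpha(u,v)+J\beta(u,v)$, which by Proposition \ref{prop-RF} equals $f(u+Jv)$.

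The main obstacle is the bookkeeping that separates the slice-dependent imaginary unit from the slice-independent scalar content of $\FT{I}{f|_{I\mathbb{R}}}$; once this clean split of $G=A+IB$ into real $A,B$ is carried out, the passage to $J$ is a substitution. A minor technical point is ensuring the Fourier inversion is applied to $L^1$ functions, which we obtain by first shifting $u>0$ as in the proof of Lemma \ref{PW-PILI-Hp-FT} (giving the exponential decay $e^{uw}$ on $w\in(-\infty,0]$), then letting $u\to 0^+$ using that $f\in H^p_{\text{slice}}$ guarantees the NTBL.
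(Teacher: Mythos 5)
Your proposal is correct and rests on the same essential mechanism as the paper's proof: transport the representation from $L_I$ to $L_J$ by observing that the scalar data underlying $\FT{I}{f|_{I\mathbb{R}}}$ is slice-independent. The bookkeeping differs slightly. The paper first shows, via the conjugation identity $\overline{f(Iv)}=f(-Iv)$ and a change of variables in the truncated Fourier integral, that $\FT{I}{f|_{I\mathbb{R}}}$ is in fact \emph{real-valued}; it then invokes Proposition \ref{prop-ext} (the Extension Lemma) and checks directly that $\FT{J}{f|_{J\mathbb{R}}}=\FT{I}{f|_{I\mathbb{R}}}$. You instead split $G=\FT{I}{f|_{I\mathbb{R}}}=A+IB$ with $A,B$ real and express $A,B$ in terms of the slice-independent $\alpha,\beta$ of Proposition \ref{prop-norm-alpha beta}, whence $\FT{J}{f|_{J\mathbb{R}}}=A+JB$ and the integral formula for $J$ follows by the same cosine/sine expansion. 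One small point you did not observe: by the parity properties $\alpha(0,-u)=\alpha(0,u)$, $\beta(0,-u)=-\beta(0,u)$, your $B$ vanishes identically, which is precisely the reality of $\FT{I}{f|_{I\mathbb{R}}}$ that the paper establishes and is what makes the clean identity $\FT{J}{f|_{J\mathbb{R}}}=\FT{I}{f|_{I\mathbb{R}}}$ hold; your argument nonetheless goes through without using $B=0$, since only the realness and $I$-independence of $A,B$ are needed. Your route avoids the conjugation trick and the explicit appeal to the Extension Lemma at the cost of slightly longer computations; both are valid and arrive at the same place.
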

	\begin{proof}
		For given $I\in\mathbb{S}$, by $f\in\mathcal{N}_{l}\lr{\mathbb{R}_{+}^{n+1}}$, we get $f\lr{\Pi_{+,I}}\subseteq L_{I}$ and $\overline{f\lr{u+Iv}}=f\lr{u-Iv}$. Since $f\in H_{\text{slice}}^{p}\left(\Pi_{+,I}\right)$, using Lemma \ref{PW-PILI-Hp-FT}, we have $\supp\FT{I}{f|_{I\mathbb{R}}}\subset\left(-\infty,0\right]$ and 
		\begin{equation}\label{PW-NRn+1+-Hp-FT-f}
			f\lr{u+Iv}=\frac{1}{\sqrt{2\pi}}\int_{-\infty}^{0}e^{\lr{u+Iv}w}\FT{I}{f|_{I\mathbb{R}}}\lr{w}\mathrm{d}w.
		\end{equation}
		Taking the conjugation of formula \eqref{PW-NRn+1+-Hp-FT-f}, we obtain
		\begin{align}\label{PW-NRn+1+-Hp-FT-Cf}
			\begin{split}
				\overline{f\lr{u+Iv}}=&\overline{\frac{1}{\sqrt{2\pi}}\int_{-\infty}^{0}e^{\lr{u+Iv}w}\FT{I}{f|_{I\mathbb{R}}}\lr{w}\mathrm{d}w}\\
				=&\frac{1}{\sqrt{2\pi}}\int_{-\infty}^{0}e^{\lr{u-Iv}w}\overline{\FT{I}{f|_{I\mathbb{R}}}\lr{w}}\mathrm{d}w.
			\end{split}
		\end{align}
		Notice that
		\begin{align*}
			\overline{\int_{-T}^{T}e^{-Ivw}f\lr{Iv}\mathrm{d}v}=\int_{-T}^{T}e^{Ivw}\overline{f\lr{Iv}}\mathrm{d}v.
		\end{align*}
		Let $f\lr{Iv}$ be the NTBL of $f\lr{u+Iv}$. Using the fact that $\overline{f\lr{u+Iv}}=f\lr{u-Iv}$, we have that $\overline{f\lr{Iv}}=f\lr{-Iv}$ holds almost everywhere $v\in\mathbb{R}$. It follows that 
		\begin{equation}\label{BarFT=FT}
			\overline{\int_{-T}^{T}e^{-Ivw}f\lr{Iv}\mathrm{d}v}=\int_{-T}^{T}e^{Ivw}f\lr{-Iv}\mathrm{d}v=\int_{-T}^{T}e^{-Ivw}f\lr{Iv}\mathrm{d}v.
		\end{equation}
		Since 
		\[\lim_{T\to\infty}\Norm{\FT{I}{f|_{I\mathbb{R}}}\lr{w}-\frac{1}{\sqrt{2\pi}}\int_{-T}^{T}e^{-Ivw}f\lr{Iv}\mathrm{d}v}_{L^{2}(\mathbb{R})}=0,\]
		we get 
		\begin{equation}\label{PW-NRn+1+-Hp-FT-CFT}
			\overline{\FT{I}{f|_{I\mathbb{R}}}\lr{w}}=\FT{I}{f|_{I\mathbb{R}}}\lr{w}.
		\end{equation}
		Substituting \eqref{PW-NRn+1+-Hp-FT-CFT} into \eqref{PW-NRn+1+-Hp-FT-Cf}, we obtain
		\begin{equation*}
			\overline{f\lr{u+Iv}}=\frac{1}{\sqrt{2\pi}}\int_{-\infty}^{0}e^{\lr{u-Iv}w}\FT{I}{f|_{I\mathbb{R}}}\lr{w}\mathrm{d}w=f\lr{u-Iv}.
		\end{equation*}
		By Proposition \ref{prop-ext}, for every $J\in\mathbb{S}$, we have
		\begin{align*}
			f\lr{u+Jv}:=&\ \ext\lr{f}\lr{u+Jv}\\
			=&\frac{1}{2}\Lr{f\lr{u-Iv}+f\lr{u+Iv}}+\frac{JI}{2}\Lr{f\lr{u-Iv}-f\lr{u+Iv}}\\
			=&\frac{1}{2}\Lr{\overline{f\lr{u+Iv}}+f\lr{u+Iv}}+\frac{JI}{2}\Lr{\overline{f\lr{u+Iv}}-f\lr{u+Iv}}\\
			=&\frac{1}{\sqrt{2\pi}}\int_{-\infty}^{0}\frac{1}{2}\Lr{e^{\lr{u-Iv}w}+e^{\lr{u+Iv}w}}\FT{I}{f|_{I\mathbb{R}}}\lr{w}\mathrm{d}w\\
			&+\frac{1}{\sqrt{2\pi}}\int_{-\infty}^{0}\frac{JI}{2}\Lr{e^{\lr{u-Iv}w}-e^{\lr{u+Iv}w}}\FT{I}{f|_{I\mathbb{R}}}\lr{w}\mathrm{d}w\\
			=&\frac{1}{\sqrt{2\pi}}\int_{-\infty}^{0}\ext\lr{e^{\lr{u+Iv}w}}\lr{u+Jv}\FT{I}{f|_{I\mathbb{R}}}\lr{w}\mathrm{d}w.
		\end{align*}
		It is easy to see that 
		\begin{equation*}
			f\lr{u+Jv}=\Re\LR{f\lr{u+Iv}}+J\Im\LR{f\lr{u+Iv}}.
		\end{equation*}
		Denote by $f\lr{Jv}$ the NTBL of $f\lr{u+Jv}$. It follows that $$f\lr{Jv}=\Re\LR{f\lr{Iv}}+J\Im\LR{f\lr{Iv}}$$ holds almost everywhere $v\in\mathbb{R}$. Note that
		\begin{align*}
			&\int_{-T}^{T}e^{-Jvw}f\lr{Jv}\mathrm{d}v\\
			=&\int_{-T}^{T}\Lr{\cos\lr{vw}-J\sin\lr{vw}}\Lr{\Re\LR{f\lr{Iv}}+J\Im\LR{f\lr{Iv}}}\mathrm{d}v\\
			=&\int_{-T}^{T}\Lr{\cos\lr{vw}\Re\LR{f\lr{Iv}}+\sin\lr{vw}\Im\LR{f\lr{Iv}}}\mathrm{d}v\\
			&+J\int_{-T}^{T}\Lr{\cos\lr{vw}\Im\LR{f\lr{Iv}}-\sin\lr{vw}\Re\LR{f\lr{Iv}}}\mathrm{d}v\\
			=&\Re\LR{\int_{-T}^{T}e^{-Ivw}f\lr{Iv}\mathrm{d}v}+J\Im\LR{\int_{-T}^{T}e^{-Ivw}f\lr{Iv}\mathrm{d}v}.
		\end{align*}
		By \eqref{BarFT=FT}, we have
		\[\Im\LR{\int_{-T}^{T}e^{-Ivw}f\lr{Iv}\mathrm{d}v}=0.\]
		Hence, we get
		\begin{equation*}
			\int_{-T}^{T}e^{-Jvw}f\lr{Jv}\mathrm{d}v=\int_{-T}^{T}e^{-Ivw}f\lr{Iv}\mathrm{d}v.
		\end{equation*}
		By
		\[\lim_{T\to\infty}\Norm{\FT{J}{f|_{J\mathbb{R}}}\lr{w}-\frac{1}{\sqrt{2\pi}}\int_{-T}^{T}e^{-Jvw}f\lr{Jv}\mathrm{d}v}_{L^{2}(\mathbb{R})}=0,\]
		we have $\FT{J}{f|_{J\mathbb{R}}}\lr{w}=\FT{I}{f|_{I\mathbb{R}}}\lr{w}$. Since 
		$\overline{e^{\lr{u+Iv}w}}=e^{\lr{u-Iv}w},$ 
		we get $$\ext\lr{e^{\lr{u-Iv}w}}\lr{u+Jv}=e^{\lr{u+Jv}w}.$$ Hence, we have $\supp\FT{J}{f|_{J\mathbb{R}}}=\supp\FT{I}{f|_{I\mathbb{R}}}\subset\left(-\infty,0\right]$ and
		\begin{equation*}
			f\lr{u+Jv}=\frac{1}{\sqrt{2\pi}}\int_{-\infty}^{0}e^{\lr{u+Jv}w}\FT{J}{f|_{J\mathbb{R}}}\lr{w}\mathrm{d}w.
		\end{equation*}
	\end{proof}
	
	Now, we prove Theorem \ref{PW-Rn+1Rn-Hp-FT}.
	\begin{proof}[Proof of Theorem \ref{PW-Rn+1Rn-Hp-FT}]
		Since $f\in H_{\text{slice}}^{p}\left(\mathbb{R}_{+}^{n+1}\right)$, by Proposition \ref{prop-NE}, for every $J\in\mathbb{S}$, we have
		\begin{equation}\label{PW-Rn+1Rn-Hp-FT-f}
			f\lr{u+Jv}=\sum_{A}h_{A}\lr{u+Jv}I_{A},
		\end{equation}
		where $h_{A}\in\mathcal{N}_{l}\lr{\mathbb{R}_{+}^{n+1}}$ and $A\in\mathcal{P}_{<}\lr{\LR{1,\dots,n}}$. By \eqref{norm-hA}, we have $$\abs{h_{A}\lr{u+Jv}}\leq \frac{\sqrt{2}}{2}\Lr{\abs{f\lr{u-Jv}}+\abs{f\lr{u+Jv}}}.$$
		Hence, we have $h_{A}\in H_{\text{slice}}^{p}\left(\Pi_{+,J}\right)$. It follow that $\supp\FT{J}{h_{A}|_{J\mathbb{R}}}\subset\left(-\infty,0\right]$ and
		\begin{equation}\label{PW-Rn+1Rn-Hp-FT-hA}
			h_{A}\lr{u+Jv}=\frac{1}{\sqrt{2\pi}}\int_{-\infty}^{0}e^{\lr{u+Jv}w}\FT{J}{h_{A}|_{J\mathbb{R}}}\lr{w}\mathrm{d}w.
		\end{equation}
		Combining \eqref{PW-Rn+1Rn-Hp-FT-f} and \eqref{PW-Rn+1Rn-Hp-FT-hA}, we get
		\begin{align*}
			f\lr{u+Jv}=&\sum_{A}\frac{1}{\sqrt{2\pi}}\int_{-\infty}^{0}e^{\lr{u+Jv}w}\FT{J}{h_{A}|_{J\mathbb{R}}}\lr{w}\mathrm{d}wI_{A}\\
			&=\frac{1}{\sqrt{2\pi}}\int_{-\infty}^{0}e^{\lr{u+Jv}w}\sum_{A}\FT{J}{h_{A}|_{J\mathbb{R}}}\lr{w}I_{A}\mathrm{d}w\\
			&=\frac{1}{\sqrt{2\pi}}\int_{-\infty}^{0}e^{\lr{u+Jv}w}\FT{J}{\sum_{A}h_{A}I_{A}|_{J\mathbb{R}}}\lr{w}\mathrm{d}w\\
			&=\frac{1}{\sqrt{2\pi}}\int_{-\infty}^{0}e^{\lr{u+Jv}w}\FT{J}{f|_{J\mathbb{R}}}\lr{w}\mathrm{d}w
		\end{align*}
		and $\supp\FT{J}{f|_{J\mathbb{R}}}=\bigcup_{A}\supp\FT{J}{h_{A}|_{J\mathbb{R}}}\subset\left(-\infty,0\right]$.
	\end{proof}
	
	Next, we need the following result to prove Theorem \ref{PW-Rn+1Rn-FT-Hp}.
	\begin{lemma}\label{PW-PILI-FT-Hp}
		For any fixed $I\in\mathbb{S}$, let $f:\Pi_{+,I}\to L_{I}$ and let $f|_{I\mathbb{R}}$ be the NTBL of $f$. Assume that $f|_{I\mathbb{R}}\in L^{p}(\mathbb{R})$, $1\leq p\leq2$ and $\supp\FT{I}{f|_{I\mathbb{R}}}\subset\left(-\infty,0\right]$. Then $f\in H_{\text{slice}}^{p}\left(\Pi_{+,I}\right)$. Moreover, we have that
		\begin{align}
			\begin{split}
				f\left(z\right)=&\frac{1}{\sqrt{2\pi}}\int_{-\infty}^{0}e^{zw}\mathscr{F}_I\left(f|_{I\mathbb{R}}\right)\left(w\right)\mathrm{d}w\\
				=&\int_{\mathbb{R}}\mathscr{K}\left(z-Iw\right)f(Iw)\mathrm{d}w\\
				=&\int_{\mathbb{R}}\mathscr{P}\left(u,v-w\right)f\left(Iw\right)\mathrm{d}w 
			\end{split}
		\end{align}
		holds for all $z=u+Iv\in\Pi_{+,I}$, where 
		\begin{equation}
			\mathscr{K}\left(z\right)=\frac{1}{2\pi}\int_{-\infty}^{0}e^{zw}\mathrm{d}w,\ \mathscr{P}\left(u,v\right)=\frac{|\mathscr{K}\left(z\right)|^2}{\mathscr{K}\left(2u\right)}.
		\end{equation}
	\end{lemma}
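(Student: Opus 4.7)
The plan is to exploit that $f : \Pi_{+,I} \to L_I$ with $L_I \cong \mathbb{C}$, which reduces the lemma to the classical Paley–Wiener theorem for $H^p$ of a half-plane in the spirit of \cite{li2018fourier}. I would construct the candidate representation directly from the Fourier data, verify it lies in $H_{\text{slice}}^{p}\lr{\Pi_{+,I}}$, and then identify it with $f$ via boundary matching and uniqueness.

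First, for $z = u + Iv$ with $u > 0$, set
$$g\lr{z} := \frac{1}{\sqrt{2\pi}} \int_{-\infty}^{0} e^{zw} \FT{I}{f|_{I\mathbb{R}}}\lr{w} \mathrm{d}w.$$
The Hausdorff–Young inequality gives $\FT{I}{f|_{I\mathbb{R}}} \in L^{q}\lr{\mathbb{R}}$ with $\frac{1}{p} + \frac{1}{q} = 1$, and the support condition combined with the exponential decay of $e^{uw}$ on $\lr{-\infty, 0}$ when $u > 0$ secures absolute convergence. Differentiation under the integral sign then shows that $g$ is holomorphic on $\Pi_{+,I}$.

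For $p = 2$, Plancherel's theorem immediately yields
$$\int_{\mathbb{R}} \abs{g\lr{u + Iv}}^{2} \mathrm{d}v = \int_{-\infty}^{0} e^{2uw} \abs{\FT{I}{f|_{I\mathbb{R}}}\lr{w}}^{2} \mathrm{d}w \leq \norm{f|_{I\mathbb{R}}}_{L^{2}\lr{\mathbb{R}}}^{2},$$
so $g \in H_{\text{slice}}^{2}\lr{\Pi_{+,I}}$, and since the Fourier transform of $v \mapsto g\lr{u+Iv}$ is $e^{uw}\FT{I}{f|_{I\mathbb{R}}}\lr{w}$ which tends to $\FT{I}{f|_{I\mathbb{R}}}\lr{w}$ in $L^{2}$ as $u \to 0^{+}$, one gets $g|_{I\mathbb{R}} = f|_{I\mathbb{R}}$ almost everywhere. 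For general $1 \leq p \leq 2$, I would perform a Fubini-style interchange to rewrite $g$ as the convolution $g\lr{z} = \int_{\mathbb{R}} \mathscr{K}\lr{z - Iw} f\lr{Iw} \mathrm{d}w$, and deduce both the $H_{\text{slice}}^{p}$ bound and the boundary limit from the standard $L^{p}$ mapping properties of the half-plane Poisson kernel. Uniqueness of the holomorphic extension of a prescribed $L^{p}$ boundary datum on $\Pi_{+,I}$ (e.g., via the Cauchy integral formula) then forces $f = g$ throughout $\Pi_{+,I}$.

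The two kernel formulas follow from
$$\mathscr{K}\lr{z} = \frac{1}{2\pi}\int_{-\infty}^{0} e^{zw}\mathrm{d}w = \frac{1}{2\pi z}, \quad \mathscr{P}\lr{u,v} = \frac{\abs{\mathscr{K}\lr{z}}^{2}}{\mathscr{K}\lr{2u}} = \frac{u}{\pi\lr{u^{2}+v^{2}}},$$
which recover the familiar Cauchy and Poisson kernels of the right half-plane. The main obstacle is the case $p<2$: neither $f|_{I\mathbb{R}}$ nor $\FT{I}{f|_{I\mathbb{R}}}$ need lie in $L^{1}\lr{\mathbb{R}}$, so the Fubini exchange leading to the convolution form, and the corresponding boundary identification, must be justified by a truncation/approximation argument — approximate $\FT{I}{f|_{I\mathbb{R}}}$ by compactly supported $L^{1}$ functions for which the interchange is transparent, and pass to the limit using Hausdorff–Young once more. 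This is where the argument genuinely goes beyond the $p=2$ case treated in Lemma \ref{PW-PILI-H2-FT}.
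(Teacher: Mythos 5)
Your overall architecture — build $g$ from the Fourier data, show it is holomorphic, establish the $H^p_{\text{slice}}$ bound, then identify $g$ with $f$ via boundary values — matches the paper's strategy. The explicit forms $\mathscr{K}(z)=\frac{1}{2\pi z}$ and $\mathscr{P}(u,v)=\frac{u}{\pi(u^2+v^2)}$ are also correct. However, there is a genuine gap at the crucial step.

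You write $g(z)=\int_{\mathbb{R}}\mathscr{K}(z-Iw)f(Iw)\,\mathrm{d}w$ and then claim to deduce the $H^p_{\text{slice}}$ bound and boundary convergence from ``the standard $L^p$ mapping properties of the half-plane Poisson kernel.'' But the expression you wrote is the \emph{Cauchy} kernel representation, and the Cauchy kernel is not in $L^1(\mathbb{R})$ on the boundary line; convolution with it is essentially the Riesz projection/Hilbert transform, which is unbounded on $L^1$ and does not give $\norm{g(u+I\cdot)}_{L^p}\leq\norm{f|_{I\mathbb{R}}}_{L^p}$. The Poisson kernel \emph{does} have the $L^1$ mass one and gives these bounds, but the identity $\int_{\mathbb{R}}\mathscr{K}(z-Iw)f(Iw)\,\mathrm{d}w=\int_{\mathbb{R}}\mathscr{P}(u,v-w)f(Iw)\,\mathrm{d}w$ only holds \emph{because of} the support condition $\supp\FT{I}{f|_{I\mathbb{R}}}\subset(-\infty,0]$, and establishing this is the heart of the lemma. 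The paper does it by introducing $Q(\xi,\eta)=\int_{\mathbb{R}}\mathscr{K}(\xi-Iw)\mathscr{K}(\eta+Iw)f(Iw)\,\mathrm{d}w$, computing $Q(\xi,\eta)=\mathscr{K}(\xi+\eta)g(\xi)$ directly via a change of variables that exploits the support hypothesis, and then setting $\eta=\bar z$ to get the Poisson representation straight from $g$ (the Cauchy form is only recovered afterward via Lemma~\ref{lemma-fFT(g)-FT(f)g}). Your proposal skips this mechanism entirely. The truncation/approximation you flag at the end is aimed at justifying the Fubini interchange, not at converting Cauchy to Poisson, so it does not close the gap; and for $p=1$ in particular there is no substitute for the Poisson kernel bound.

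A secondary, smaller point: the closing appeal to ``uniqueness of the holomorphic extension of a prescribed $L^p$ boundary datum via the Cauchy integral formula'' is not a clean black box on an unbounded half-plane without a growth or Hardy-class assumption. In the paper one already knows $g\in H^p_{\text{slice}}(\Pi_{+,I})$ with $g|_{I\mathbb{R}}=f|_{I\mathbb{R}}$ a.e., and $f$ is assumed to have these as its NTBL, so the identification $f=g$ proceeds from that; it is worth stating the uniqueness argument in a form that actually uses the established membership of $g$ in the Hardy class.
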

	\begin{proof}
		The proof is similar to that given in \cite{li2018fourier} (see also \cite{Hao-Dang-Mai}). For any fixed $I\in\mathbb{S}$. It is easy to show that  for all $z=u+Iv\in\Pi_{+,I}$, $$e^{zw}\FT{I}{f|_{I\mathbb{R}}}\lr{w}\in L^{1}\lr{-\infty,0}.$$
		Using Fubini's theorem and Morera' theorem, we conclude that
		\begin{equation}
			g\lr{z}=\frac{1}{\sqrt{2\pi}}\int_{-\infty}^{0}e^{zw}\FT{I}{f|_{I\mathbb{R}}}\lr{w}\mathrm{d}w
		\end{equation}\label{PW-PILI-FT-Hp-g}
		is holomorphic on $\Pi_{+,I}$. Next, we will show that $g\in H_{\text{slice}}^{p}\left(\Pi_{+,I}\right)$. For every $\xi,\eta\in \Pi_{+,I}$, define
		\begin{equation*}
			\mathscr{K}\lr{\xi}=\frac{1}{2\pi}\int_{-\infty}^{0}e^{\xi t}\mathrm{d}t
		\end{equation*}
		and 
		\begin{equation}\label{PW-PILI-FT-Hp-Q}
			Q\lr{\xi,\eta}=\int_{\mathbb{R}}\mathscr{K}\lr{\xi-Iw}\mathscr{K}\lr{\eta+Iw}f\lr{Iw}\mathrm{d}w.
		\end{equation}
		Note that 
		\begin{align*}
			\mathscr{K}\lr{\xi-Iw}\mathscr{K}\lr{\eta+Iw}=&\frac{1}{4\pi^2}\int_{-\infty}^{0}e^{\lr{\xi-Iw}t}\mathrm{d}t\int_{-\infty}^{0}e^{\lr{\eta+Iw}s}\mathrm{d}s\\
			=&\frac{1}{4\pi^2}\int_{-\infty}^{0}\int_{-\infty}^{0}e^{\lr{\xi-Iw}t}e^{\lr{\eta+Iw}s}\mathrm{d}t\mathrm{d}s\\
			=&\frac{1}{4\pi^2}\int_{-\infty}^{0}\int_{-\infty}^{0}e^{\xi t}e^{\eta s}e^{-Iwt}e^{Iws}\mathrm{d}t\mathrm{d}s\\
			=&\frac{1}{4\pi^2}\int_{-\infty}^{0}\int_{-\infty}^{0}e^{\xi t}e^{\eta s}e^{-Iw\lr{t-s}}\mathrm{d}t\mathrm{d}s.
		\end{align*}
		Let $r=t-s$. We have
		\begin{align*}
			\mathscr{K}\lr{\xi-Iw}\mathscr{K}\lr{\eta+Iw}=&\frac{1}{4\pi^2}\int_{-\infty}^{0}\int_{-\infty}^{-s}e^{\xi \lr{r+s}}e^{\eta s}e^{-Iwr}\mathrm{d}r\mathrm{d}s\\
			=&\frac{1}{4\pi^2}\int_{-\infty}^{-s}e^{\xi r}e^{-Iwr}\mathrm{d}r\int_{-\infty}^{0}e^{\lr{\xi+\eta}s}\mathrm{d}s\\
			=&\mathscr{K}\lr{\xi+\eta}\frac{1}{2\pi}\int_{-\infty}^{-s}e^{\xi r}e^{-Iwr}\mathrm{d}r
		\end{align*}
		where $s<0$. It follows that 
		\begin{align*}
			Q\lr{\xi,\eta}=&\int_{\mathbb{R}}\mathscr{K}\lr{\xi-Iw}\mathscr{K}\lr{\eta+Iw}f\lr{Iw}\mathrm{d}w\\
			=&\mathscr{K}\lr{\xi+\eta}\frac{1}{2\pi}\int_{\mathbb{R}}\int_{-\infty}^{-s}e^{\xi r}e^{-Iwr}f\lr{Iw}\mathrm{d}r\mathrm{d}w\\
			=&\mathscr{K}\lr{\xi+\eta}\frac{1}{\sqrt{2\pi}}\int_{-\infty}^{-s}e^{\xi r}\FT{I}{f|_{I\mathbb{R}}}\lr{r}\mathrm{d}r\\
			=&\mathscr{K}\lr{\xi+\eta}\frac{1}{\sqrt{2\pi}}\int_{-\infty}^{0}e^{\xi r}\FT{I}{f|_{I\mathbb{R}}}\lr{r}\mathrm{d}r\\
			=&\mathscr{K}\lr{\xi+\eta}g\lr{\xi}.
		\end{align*}
		For $z=u+Iv\in\Pi_{+,I}$, we get $\overline{z}\in\Pi_{+,I}$, and 
		\begin{equation}\label{PW-PILI-FT-Hp-Kg}
			Q\lr{z,\overline{z}}=\mathscr{K}\lr{z+\overline{z}}g\lr{z}=\mathscr{K}\lr{2u}g\lr{z}.
		\end{equation}
		Combining \eqref{PW-PILI-FT-Hp-Q} and \eqref{PW-PILI-FT-Hp-Kg}, we have
		\begin{equation}\label{PW-PILI-FT-Hp-Pf}
			g\lr{z}=\int_{\mathbb{R}}\frac{\mathscr{K}\lr{z-Iw}\mathscr{K}\lr{\overline{z}+Iw}}{\mathscr{K}\lr{2u}}f\lr{Iw}\mathrm{d}w=\int_{\mathbb{R}}\mathscr{P}\lr{u,v-w}f\lr{Iw}\mathrm{d}w
		\end{equation}
		where $\mathscr{P}\lr{u,v}$ is the Poisson kernel. Since $\norm{g}_{L^{p}(\mathbb{R})}=\norm{\mathscr{P}*f|_{I\mathbb{R}}}_{L^{p}(\mathbb{R})}\leq\norm{\mathscr{P}}_{L^{1}(\mathbb{R})}\norm{f|_{I\mathbb{R}}}_{L^{p}(\mathbb{R})}$, we have 
		\begin{equation*}
			\sup_{u>0}\int_{\mathbb{R}}\abs{g\lr{u+Iv}}^{p}\mathrm{d}v\leq\int_{\mathbb{R}}\abs{f\lr{Iv}}^{p}\mathrm{d}v<\infty.
		\end{equation*}
		It follows that $g\in H_{\text{slice}}^{p}\left(\Pi_{+,I}\right)$. By \eqref{PW-PILI-FT-Hp-Pf}, we obtain that $\lim_{u\to0}g\lr{u+Iv}=f\lr{Iv}$ holds almost everywhere $v\in\mathbb{R}$. Assume that $f\lr{Iv}$ is the NTBL of a holomorphic function $f\lr{u+Iv}$ at $Iv$. We can easily show that $f\lr{z}=g\lr{z}$ holds for all $z\in\Pi_{+,I}$. Hence, we have $f\in H_{\text{slice}}^{p}\left(\Pi_{+,I}\right)$.
		
		Moreover, by Lemma \ref{lemma-fFT(g)-FT(f)g} and \eqref{PW-PILI-FT-Hp-g}, we have
		\begin{align*}
			f\lr{z}=&\frac{1}{\sqrt{2\pi}}\int_{-\infty}^{0}e^{zw}\FT{I}{f|_{I\mathbb{R}}}\lr{w}\mathrm{d}w\\
			=&\frac{1}{2\pi}\int_{\mathbb{R}}\int_{-\infty}^{0}e^{-Ivw}e^{zw}\mathrm{d}wf\lr{Iv}\mathrm{d}v\\
			=&\frac{1}{2\pi}\int_{\mathbb{R}}\mathscr{K}\lr{z-Iv}f\lr{Iv}\mathrm{d}v.
		\end{align*}
	\end{proof}
	
	Now, we prove Theorem \ref{PW-Rn+1Rn-FT-Hp}.
	\begin{proof}[Proof of Theorem \ref{PW-Rn+1Rn-FT-Hp}]
		For given $I=I_{1}\in\mathbb{S}$, let $f:I\mathbb{R}\to\mathbb{R}_{n}$ and let $I_{2},\dots,I_{n}$ be
		a completion to a basis of  $\mathbb{R}_{n}$ satisfying  $I_{r}I_{s}+I_{s}I_{r}=-2\delta_{rs}$. By Proposition \ref{prop-SL}, we have 
		\[f(Iv)=\sum_{A^{'}}F_{A^{'}}(Iv)I_{A^{'}}\]
		where $A^{'}\in\mathcal{P}_{<}\lr{\LR{2,\dots,n}}$ and $F_{A^{'}}$ is $L_{I}$-valued. It is easy to verify that
		\[|f(Iv)|^{2}=\sum_{A^{'}}|F_{A^{'}}(Iv)|^{2}\]
		and
		\[|\FT{I}{f|_{I\mathbb{R}}}|^{2}=\sum_{A^{'}}|\FT{I}{F_{A^{'}}|_{I\mathbb{R}}}|^{2}.\]
		Since $f|_{I\mathbb{R}}\in L^{p}(\mathbb{R})$, $1\leq p\leq2$ and $\supp\FT{I}{f|_{I\mathbb{R}}}\subset\left(-\infty,0\right]$, for each $A^{'}\in\mathcal{P}_{<}\lr{\LR{2,\dots,n}}$, we have $F_{A^{'}}|_{I\mathbb{R}}\in L^{p}(\mathbb{R})$ and $\supp\FT{I}{F_{A^{'}}|_{I\mathbb{R}}}\subset(-\infty,0]$. By Lemma \ref{PW-PILI-FT-Hp}, we obtain that for each $A^{'}\in\mathcal{P}_{<}\lr{\LR{2,\dots,n}}$,
		\begin{equation*}
			F_{A^{'}}\lr{u+Iv}=\int_{\mathbb{R}}\mathscr{P}\lr{u,v-w}F_{A^{'}}\lr{Iw}\mathrm{d}w
		\end{equation*}
		is in $H_{\text{slice}}^{p}\lr{\Pi_{+,I}}$ and  $F_{A^{'}}\lr{Iv}$ is the NTBL of $F_{A^{'}}$ at $Iv$. Let
		\[f\lr{u+Iv}=\sum_{A^{'}}F_{A^{'}}\lr{u+Iv}I_{A^{'}}.\]
		Then
		\begin{equation*}
			f\lr{u+Iv}=\int_{\mathbb{R}}\mathscr{P}\lr{u,v-w}f\lr{Iw}\mathrm{d}w
		\end{equation*}
		is in $H_{\text{slice}}^{p}\lr{\Pi_{+,I}}$ and $f\lr{Iv}$ is the NTBL of $f$ at $Iv$.
		Note that
		\begin{align*}
			f\lr{u-Iv}=&\int_{\mathbb{R}}\mathscr{P}\lr{u,-v-w}f\lr{Iw}\mathrm{d}w\\
			=&\int_{\mathbb{R}}\mathscr{P}\lr{u,-\lr{v+w}}f\lr{Iw}\mathrm{d}w\\
			=&\int_{\mathbb{R}}\mathscr{P}\lr{u,v+w}f\lr{Iw}\mathrm{d}w\\
			=&\int_{\mathbb{R}}\mathscr{P}\lr{u,v-w}f\lr{-Iw}\mathrm{d}w.
		\end{align*}
		By Proposition \ref{prop-ext}, for every $J\in\mathbb{S}$, we have
		\begin{align*}
			f\lr{u+Jv}:=&\ \ext\lr{f}\lr{u+Jv}\\
			=&\frac{1}{2}\Lr{f\lr{u-Iv}+f\lr{u+Iv}}+\frac{JI}{2}\Lr{f\lr{u-Iv}-f\lr{u+Iv}}\\
			=&\int_{\mathbb{R}}\mathscr{P}\lr{u,v-w}\lr{\frac{1}{2}\Lr{f\lr{-Iw}+f\lr{Iw}}+\frac{JI}{2}\Lr{f\lr{-Iw}-f\lr{Iw}}}\mathrm{d}w.
		\end{align*}
		It is obvious that $f$ is left slice monogenic on $\mathbb{R}_{+}^{n+1}$. Denote by $f\lr{Jv}$ the NTBL of $f$ at $Jv$. We obtain that
		\begin{equation*}
			f\lr{Jv}=\frac{1}{2}\Lr{f\lr{-Iv}+f\lr{Iv}}+\frac{JI}{2}\Lr{f\lr{-Iv}-f\lr{Iv}}
		\end{equation*}
		holds almost everywhere $v\in\mathbb{R}$. Note that
		\begin{equation*}
			\int_{\mathbb{R}}\abs{f\lr{Jv}}^{p}\mathrm{d}v\leq2^{-\frac{p}{2}}\int_{\mathbb{R}}\Abs{\abs{f\lr{-Iv}}+\abs{f\lr{Iv}}}^{p}\mathrm{d}v\leq2^{1-\frac{p}{2}}\int_{\mathbb{R}}\abs{f\lr{Iv}}^{p}\mathrm{d}v<\infty.
		\end{equation*}
		Hence, we have $$\sup_{J\in\mathbb{S}}\int_{\mathbb{R}}\abs{f\lr{Jv}}^{p}\mathrm{d}v<\infty.$$
		It follows that $f\in H_{\text{slice}}^{p}\left(\mathbb{R}_{+}^{n+1}\right)$. 
	\end{proof}
	
	Furthermore, we can compute the reproducing kernel for the left slice monogenic Hardy space in the right half-space as an application of Theorem \ref{PW-Rn+1Rn-Hp-FT}.  Define
	\begin{align}
		\begin{split}
			\mathscr{K}_{\Pi_{+,I},\xi+I\eta}\lr{u+Iv}&=\mathscr{K}_{\Pi_{+,I},\xi+I\eta}\lr{u+Iv,\xi+I\eta}\\
			&=\frac{1}{2\pi}\int_{-\infty}^{0}e^{\Lr{\lr{u+Iv}+\lr{\xi-I\eta}}w}\mathrm{d}w.
		\end{split}
	\end{align}
	By Proposition \ref{prop-ext}, for every $J\in\mathbb{S},$ we define
	\begin{align}
		\begin{split}
			&\mathscr{K}_{\mathbb{R}_{+}^{n+1},\xi+I\eta}\lr{u+Jv}\\
			=&\ \ext\lr{\mathscr{K}_{\Pi_{+,I}}\lr{u+Iv,\xi+I\eta}}\lr{u+Jv}\\
			=&\frac{1}{2}\Lr{\frac{1}{2\pi}\int_{-\infty}^{0}e^{\Lr{\lr{u-Iv}+\lr{\xi-I\eta}}w}\mathrm{d}w+\frac{1}{2\pi}\int_{-\infty}^{0}e^{\Lr{\lr{u+Iv}+\lr{\xi-I\eta}}w}\mathrm{d}w}\\
			&+\frac{JI}{2}\Lr{\frac{1}{2\pi}\int_{-\infty}^{0}e^{\Lr{\lr{u-Iv}+\lr{\xi-I\eta}}w}\mathrm{d}w-\frac{1}{2\pi}\int_{-\infty}^{0}e^{\Lr{\lr{u+Iv}+\lr{\xi-I\eta}}w}\mathrm{d}w}\\
			=&\frac{1}{2\pi}\int_{-\infty}^{0}\frac{1}{2}\Lr{e^{\lr{u-Iv}w}+e^{\lr{u+Iv}w}}e^{\lr{\xi-I\eta}w}\mathrm{d}w\\
			&+\frac{1}{2\pi}\int_{-\infty}^{0}\frac{JI}{2}\Lr{e^{\lr{u-Iv}w}+e^{\lr{u+Iv}w}}e^{\lr{\xi-I\eta}w}\mathrm{d}w\\
			=&\frac{1}{2\pi}\int_{-\infty}^{0}\Lr{\Re\LR{e^{\lr{u+Iv}w}}+J\Im\LR{e^{\lr{u+Iv}w}}}e^{\lr{\xi-I\eta}w}\mathrm{d}w\\
			=&\frac{1}{2\pi}\int_{-\infty}^{0}e^{\lr{u+Jv}w}e^{\lr{\xi-I\eta}w}\mathrm{d}w\\
			=&\frac{1}{2\pi}\int_{-\infty}^{0}e^{\Lr{\lr{u+Jv}+\lr{\xi-I\eta}}w}\mathrm{d}w.
		\end{split}
	\end{align}
	
	Now, we give the reproducing kernel formula for $H_{slice}^{p}\left(\mathbb{R}_{+}^{n+1}\right)$.
	\begin{corollary}
		Let $f\in H_{slice}^{p}\left(\mathbb{R}_{+}^{n+1}\right)$ and $I\in\mathbb{S}$. Then for every $\mathbf{x}\in\mathbb{R}^{n+1}$, we have
		\begin{equation*}
			f\lr{\mathbf{x}}=\int_{\mathbb{R}}\mathscr{K}_{\mathbb{R}_{+}^{n+1}}\lr{\mathbf{x},I\xi}f\lr{I\xi}\mathrm{d}\xi,
		\end{equation*}
		where $f\lr{I\xi}$ is the NTBL of $f$ at $I\xi$.
	\end{corollary}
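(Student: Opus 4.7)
The plan is to first establish the reproducing formula on the slice $\Pi_{+,I}$ by a componentwise application of Lemma~\ref{PW-PILI-FT-Hp}, and then extend it to an arbitrary $\mathbf{x}=u+Jv\in\mathbb{R}_+^{n+1}$ via the Representation Formula, recognizing that the resulting integrand is precisely $\ext(\mathscr{K}_{\Pi_{+,I}}(\cdot,I\xi))(u+Jv)=\mathscr{K}_{\mathbb{R}_+^{n+1}}(u+Jv,I\xi)$ by the definition recorded just before the corollary.

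To produce the slice formula, I would invoke the Splitting Lemma (Proposition~\ref{prop-SL}) to write $f_I(z)=\sum_{A'}F_{A'}(z)I_{A'}$ with each $F_{A'}:\Pi_{+,I}\to L_I$ holomorphic. Since $|F_{A'}|\le|f|$ pointwise, every $F_{A'}$ inherits membership in $H_{\text{slice}}^p(\Pi_{+,I})$. The $L_I$-linearity of the 1DLCFT on the slice gives $\mathscr{F}_I(f|_{I\mathbb{R}})=\sum_{A'}\mathscr{F}_I(F_{A'}|_{I\mathbb{R}})I_{A'}$, and combining this with Theorem~\ref{PW-Rn+1Rn-Hp-FT} forces $\supp\mathscr{F}_I(F_{A'}|_{I\mathbb{R}})\subset(-\infty,0]$ for every $A'$. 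Lemma~\ref{PW-PILI-FT-Hp} then applies to every $F_{A'}$ and, after identifying $\mathscr{K}(z-I\xi)=\mathscr{K}_{\Pi_{+,I}}(z,I\xi)$, delivers
\[F_{A'}(u+Iv)=\int_{\mathbb{R}}\mathscr{K}_{\Pi_{+,I}}(u+Iv,I\xi)F_{A'}(I\xi)\,\mathrm{d}\xi.\]
Multiplying on the right by the constant basis element $I_{A'}$ (which can be pulled back inside the integral since it is independent of $\xi$) and summing over $A'$ yields
\[f(u+Iv)=\int_{\mathbb{R}}\mathscr{K}_{\Pi_{+,I}}(u+Iv,I\xi)f(I\xi)\,\mathrm{d}\xi,\qquad u+Iv\in\Pi_{+,I},\]
and the same identity holds verbatim at $u-Iv\in\Pi_{+,I}$.

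To reach a general $\mathbf{x}=u+Jv\in\mathbb{R}_+^{n+1}$ with $J\in\mathbb{S}$ I would apply the Representation Formula (Proposition~\ref{prop-RF}),
\[f(u+Jv)=\tfrac{1}{2}\bigl[f(u-Iv)+f(u+Iv)\bigr]+\tfrac{JI}{2}\bigl[f(u-Iv)-f(u+Iv)\bigr],\]
substitute the slice identity above for $f(u\pm Iv)$, and interchange the finite linear combination with the integral (a harmless swap since only the two bounded coefficients $\tfrac{1}{2}$ and $\tfrac{JI}{2}$ are involved and absolute convergence on $L_I$ has already been secured by Lemma~\ref{PW-PILI-FT-Hp}). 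The integrand that emerges is
\[\tfrac{1}{2}\bigl[\mathscr{K}_{\Pi_{+,I}}(u+Iv,I\xi)+\mathscr{K}_{\Pi_{+,I}}(u-Iv,I\xi)\bigr]+\tfrac{JI}{2}\bigl[\mathscr{K}_{\Pi_{+,I}}(u-Iv,I\xi)-\mathscr{K}_{\Pi_{+,I}}(u+Iv,I\xi)\bigr],\]
which is exactly $\ext(\mathscr{K}_{\Pi_{+,I}}(\cdot,I\xi))(u+Jv)=\mathscr{K}_{\mathbb{R}_+^{n+1}}(u+Jv,I\xi)$ by the formula immediately preceding the corollary, completing the argument.

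The main technical point is the bookkeeping of the componentwise step, namely verifying that the global support condition supplied by Theorem~\ref{PW-Rn+1Rn-Hp-FT} really descends to each scalar piece $\mathscr{F}_I(F_{A'}|_{I\mathbb{R}})$, so that Lemma~\ref{PW-PILI-FT-Hp} is legitimately available for every $F_{A'}$; once this is in hand, the remainder of the proof is a substitution into the Representation Formula and a direct reading of the extended kernel from its definition.
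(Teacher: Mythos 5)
Your proof is correct and lands on the same two-stage structure as the paper's (slice-level reproducing formula, then extension via the Representation Formula / Extension Lemma), but the way you produce the slice formula is a different route. The paper's proof is one line: it plugs the already-established Fourier representation from Theorem~\ref{PW-Rn+1Rn-Hp-FT} directly into the definition of $\mathscr{F}_I(f|_{I\mathbb{R}})$ and interchanges the two integrals by Fubini, immediately producing $f(u+Iv)=\int_{\mathbb{R}}\mathscr{K}_{\Pi_{+,I}}(u+Iv,I\xi)f(I\xi)\,\mathrm{d}\xi$ on the slice. You instead go back to the building blocks: you split $f_I=\sum_{A'}F_{A'}I_{A'}$ via the Splitting Lemma, check that each $L_I$-valued component inherits the $H^p_{\mathrm{slice}}(\Pi_{+,I})$ membership and the support condition through $L_I$-linearity of the 1DLCFT, apply the kernel representation already proved inside Lemma~\ref{PW-PILI-FT-Hp} to each $F_{A'}$, and then reassemble. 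Your detour is not necessary, since Theorem~\ref{PW-Rn+1Rn-Hp-FT} already applies to the full Clifford-valued $f$ on the slice, but it does buy a cleaner justification of the integral interchange (for $1<p\le 2$ the direct Fubini step in the paper is glossed over, whereas Lemma~\ref{PW-PILI-FT-Hp} handles that convergence carefully for $L_I$-valued functions), at the cost of the extra componentwise bookkeeping you flag at the end. The extension step is identical in both proofs: pull the constant $\tfrac12$ and $\tfrac{JI}{2}$ inside the $\xi$-integral and read off $\mathscr{K}_{\mathbb{R}_+^{n+1}}(u+Jv,I\xi)$ from the formula defined just above the corollary.
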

	\begin{proof}
		Using Theorem \ref{PW-Rn+1Rn-Hp-FT} and Fubini's theorem, for given $I\in\mathbb{S}$, we have
		\begin{align*}
			f\lr{u+Iv}=&\frac{1}{\sqrt{2\pi}}\int_{-\infty}^{0}e^{\lr{u+Iv}w}\FT{I}{f|_{I\mathbb{R}}}\lr{w}\mathrm{d}w\\
			=&\frac{1}{2\pi}\int_{\mathbb{R}}\int_{-\infty}^{0}e^{\lr{u+Iv}w}e^{-I\xi w}\mathrm{d}wf\lr{I\xi}\mathrm{d}\xi\\
			=&\int_{\mathbb{R}}\mathscr{K}_{\Pi_{+,I}}\lr{u+Iv,I\xi}f\lr{I\xi}\mathrm{d}\xi.
		\end{align*}
		By Proposition \ref{prop-ext}, for every $J\in\mathbb{S}$, we have
		\[f(x+Jv)=\int_{\mathbb{R}}\mathscr{K}_{\mathbb{R}_{+}^{n+1}}\lr{u+Jv,I\xi}f\lr{I\xi}\mathrm{d}\xi.\]
	\end{proof}
	
	\subsection{Compact type Paley-Wiener theorem}
	
	In this subsection we will prove the following main result.
	\begin{theorem}\label{PW-Rn+1Rn}
		Let $f:\mathbb{R}^{n+1}\to\mathbb{R}_{n}$ be a left slice monogenic function with $f|_{\mathbb{R}}\in L^{2}(\mathbb{R})$ and $B>0$ be a positive number. Then the following two conditions are equivalent:
		\begin{enumerate}
			\item[(i)] There exists a constant $C$ such that 
			\begin{equation*}
				|f\left(u+Iv\right)|\leq Ce^{B|u+Iv|}
			\end{equation*}
			for all $I\in\mathbb{S}$ and $u,v\in\mathbb{R}$.
			\item[(ii)] $\mathbf{supp}\mathscr{F}_{I}(f|_{\mathbb{R}})\subset\Lr{-B,B}$ for all  $I\in\mathbb{S}$.
		\end{enumerate}
		Moreover, if one of the above conditions holds, then for all $I\in\mathbb{S}$ and $u,v\in\mathbb{R}$, we have 
		\begin{equation*}
			f\left(u+Iv\right)=\frac{1}{\sqrt{2\pi}}\int_{-B}^{B}e^{I\left(u+Iv\right)w}\mathscr{F}_{I}\left(f|_{\mathbb{R}}\right)\left(w\right)\mathrm{d}w.
		\end{equation*}
	\end{theorem}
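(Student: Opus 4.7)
The plan is to reduce both implications to the classical compact-type Paley-Wiener theorem on a single slice $L_I\cong\mathbb{C}$ via the Splitting Lemma (Proposition \ref{prop-SL}), and then transport the conclusion across slices using the Representation Formula (Proposition \ref{prop-RF}).

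Fix $I=I_1\in\mathbb{S}$ and complete it to a basis $I_1,\dots,I_n$ of $\mathbb{R}_n$ as in Proposition \ref{prop-I}. Since $f$ is left slice monogenic on all of $\mathbb{R}^{n+1}$, the Splitting Lemma yields $f_I(z)=\sum_{A'}F_{A'}(z)I_{A'}$ with each $F_{A'}:L_I\to L_I$ entire in the classical sense under $L_I\cong\mathbb{C}$, and $|f|^2=\sum_{A'}|F_{A'}|^2$. Because the 1DLCFT multiplies on the left by the $L_I$-valued kernel $e^{-Iuw}$ and $L_I$ is commutative, one obtains $\mathscr{F}_I(f|_\mathbb{R})=\sum_{A'}\mathscr{F}_I(F_{A'}|_\mathbb{R})I_{A'}$, and by linear independence of $\{I_{A'}\}$ the support of $\mathscr{F}_I(f|_\mathbb{R})$ equals the union of the supports of the individual $\mathscr{F}_I(F_{A'}|_\mathbb{R})$.

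For $(i)\Rightarrow(ii)$, the bounds $|F_{A'}(z)|\leq|f(z)|\leq Ce^{B|z|}$ together with $F_{A'}|_\mathbb{R}\in L^2(\mathbb{R})$ (forced by $|F_{A'}|\leq|f|$ on $\mathbb{R}$) allow me to invoke the classical compact-type Paley-Wiener theorem in $L_I\cong\mathbb{C}$ for each $F_{A'}$, which yields both $\supp\mathscr{F}_I(F_{A'}|_\mathbb{R})\subset\Lr{-B,B}$ and the representation $F_{A'}(u+Iv)=\frac{1}{\sqrt{2\pi}}\int_{-B}^{B}e^{I(u+Iv)w}\mathscr{F}_I(F_{A'}|_\mathbb{R})(w)\mathrm{d}w$; summing over $A'$ produces (ii) and the stated integral formula on the slice $L_I$, and since $I$ was arbitrary this holds on every slice. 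For $(ii)\Rightarrow(i)$, the converse direction of the classical theorem gives $|F_{A'}(z)|\leq C_{A'}e^{B|z|}$ on $L_I$, and summing delivers $|f(u+Iv)|\leq Ce^{B|u+Iv|}$ on this single slice. To extend the bound to an arbitrary $J\in\mathbb{S}$, I apply the Representation Formula to obtain $|f(u+Jv)|\leq\frac{\sqrt{2}}{2}\bigl(|f(u+Iv)|+|f(u-Iv)|\bigr)\leq\sqrt{2}\,C\,e^{B|u+Jv|}$, using the geometric identity $|u\pm Iv|=\sqrt{u^2+v^2}=|u+Jv|$.

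The main obstacle I anticipate is the algebraic bookkeeping needed to confirm that the Splitting Lemma decomposition interacts cleanly with the left-sided 1DLCFT, with Fourier supports, and with pointwise exponential-type bounds; once the identities $\mathscr{F}_I(f|_\mathbb{R})=\sum_{A'}\mathscr{F}_I(F_{A'}|_\mathbb{R})I_{A'}$ and $|f|^2=\sum_{A'}|F_{A'}|^2$ are verified, the theorem reduces transparently to the classical one-variable Paley-Wiener statement applied slice by slice, with the Representation Formula supplying the final step needed to secure uniformity over all of $\mathbb{S}$.
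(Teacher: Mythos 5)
Your proof is correct, and it takes a genuinely different route from the paper's.

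\textbf{What the paper does.} The paper invokes Proposition~\ref{prop-NE} to decompose $f$ \emph{globally} as $f=\sum_A h_A I_A$ with each $h_A\in\mathcal{N}_l(\mathbb{R}^{n+1})$ an \emph{intrinsic} slice monogenic function (so $h_A(\overline{z})=\overline{h_A(z)}$ on every slice), together with the pointwise bound~\eqref{norm-hA}. The heavy lifting is then done by Lemma~\ref{PW-NN}: for an intrinsic function satisfying the exponential bound on one slice $L_I$, it proves that $\supp\mathscr{F}_J(h_A|_\mathbb{R})\subset[-B,B]$ for \emph{every} $J$ and that the integral formula holds on every slice. The content of Lemma~\ref{PW-NN} is a careful analysis of how $\mathscr{F}_J(h_A|_\mathbb{R})$ relates to $\mathscr{F}_I(h_A|_\mathbb{R})$ --- in particular that their $\Re$ and $\Im$ parts coincide as real-valued functions independent of the imaginary unit. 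The direction $(ii)\Rightarrow(i)$ is done directly on one slice (define $g$ by the Fourier integral, bound it via Cauchy--Schwarz and Plancherel, identify with $f$) followed by the Representation Formula as you do.

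\textbf{What you do differently.} You bypass the intrinsic decomposition $\mathcal{N}_l$ and Lemma~\ref{PW-NN} entirely. Instead you fix $I$, apply the Splitting Lemma~\ref{prop-SL} on that slice to get $f_I=\sum_{A'}F_{A'}I_{A'}$ with each $F_{A'}:L_I\to L_I$ entire, and reduce directly to the classical compact-type Paley--Wiener theorem on $L_I\cong\mathbb{C}$, using the identities $|f|^2=\sum_{A'}|F_{A'}|^2$ and $\mathscr{F}_I(f|_\mathbb{R})=\sum_{A'}\mathscr{F}_I(F_{A'}|_\mathbb{R})I_{A'}$ (the latter holding because $e^{-Iuw}$ acts on the left and $I_{A'}$ sits on the right). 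Since $I$ was arbitrary you get (ii) for every $I$ without ever needing to relate the Fourier data across slices; and for $(ii)\Rightarrow(i)$ you again reduce to the classical theorem per $F_{A'}$ and then use the Representation Formula only at the very end to cover arbitrary $J$.

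\textbf{Comparison.} Your argument is cleaner and more elementary for this particular theorem: the slice-by-slice Splitting Lemma reduction makes the statement transparently equivalent to $2^{n-1}$ copies of the classical one-variable theorem, with no need to track how Fourier transforms transform under change of imaginary unit. The paper's route via $\mathcal{N}_l$ and Lemma~\ref{PW-NN} is heavier but yields additional structural information --- it exhibits explicitly that $\Re\{\mathscr{F}_I(h_A|_\mathbb{R})\}$ and $\Im\{\mathscr{F}_I(h_A|_\mathbb{R})\}$ are $I$-independent --- and this uniform transfer mechanism is the same machinery the paper reuses (via the analogous Lemma~\ref{PW-NRn+1+-Hp-FT}) for the non-compact Hardy-space theorem and the reproducing-kernel computations, so there is an economy-of-method argument for the paper's approach even though your proof is shorter for Theorem~\ref{PW-Rn+1Rn} in isolation.

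Two small points to tighten in a polished write-up: (a) when you apply the classical theorem to the band-limited function $F_{A'}|_\mathbb{R}$ in $(ii)\Rightarrow(i)$, note explicitly that the entire extension it produces must coincide with the $F_{A'}$ coming from the Splitting Lemma by the identity theorem on $L_I$; (b) the linear independence of $\{I_{A'}\}$ over $L_I$ (i.e. that $\mathbb{R}_n$ is a free $L_I$-module on this basis) should be stated, as it is what justifies $\supp\mathscr{F}_I(f|_\mathbb{R})=\bigcup_{A'}\supp\mathscr{F}_I(F_{A'}|_\mathbb{R})$.
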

	
	In order to prove Theorem \ref{PW-Rn+1Rn}, we need the following results.
	\begin{lemma}\label{PW-LILI}
		For any fixed $I\in\mathbb{S}$, let $f:L_{I}\to L_{I}$ be a holomorphic function with $f|_{\mathbb{R}}\in L^{2}(\mathbb{R})$ and $B>0$ be a positive number. Then the following two conditions are equivalent:
		\begin{enumerate}
			\item[(i)] There exists a constant $C$ such that 
			\begin{equation}\label{PW-LILI-norm-f}
				|f\left(z\right)|\leq Ce^{B|z|}
			\end{equation}
			for all $z=u+Iv\in L_{I}$.
			\item[(ii)] $\mathbf{supp}\mathscr{F}_{I}(f|_{\mathbb{R}})\subset\Lr{-B,B}$.
		\end{enumerate}
		Moreover, if one of the above conditions holds, then we have 
		\begin{equation}\label{PW-LILI-HFT}
			f\left(z\right)=\frac{1}{\sqrt{2\pi}}\int_{-B}^{B}e^{Izw}\mathscr{F}_{I}\left(f|_{\mathbb{R}}\right)\left(w\right)\mathrm{d}w.
		\end{equation}
	\end{lemma}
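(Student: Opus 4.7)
The plan is to reduce Lemma \ref{PW-LILI} to the classical $L^2$ Paley--Wiener theorem for entire functions of exponential type on $\mathbb{C}$, by exploiting the natural identification of the slice plane $L_I$ with $\mathbb{C}$. The quoted remark in the introduction (``it is easy to prove the Paley-Wiener type theorems for slice monogenic functions in a particular slice since the function theory in such case is very similar to the holomorphic function theory of one complex variable'') suggests exactly this route.

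First I would fix $I\in\mathbb{S}$ and introduce the $\mathbb{R}$-algebra isomorphism $\Phi_I\colon L_I\to\mathbb{C}$ defined by $\Phi_I(u+Iv)=u+\mathbf{i}v$. Because $I^2=-1=\mathbf{i}^2$, the map $\Phi_I$ sends $I$ to $\mathbf{i}$, preserves the modulus, fixes the real line pointwise, and conjugates the operator $\overline{\partial_I}=\frac{1}{2}(\partial_u+I\partial_v)$ into the classical Cauchy--Riemann operator $\overline{\partial}$. Setting $\widetilde{f}:=\Phi_I\circ f\circ\Phi_I^{-1}$, one checks that $\widetilde{f}\colon\mathbb{C}\to\mathbb{C}$ is holomorphic in the usual sense, $\widetilde{f}|_{\mathbb{R}}=f|_{\mathbb{R}}\in L^2(\mathbb{R})$, and $|\widetilde{f}(\Phi_I(z))|=|f(z)|$ for all $z\in L_I$, so condition (i) is equivalent to the bound $|\widetilde{f}(\zeta)|\le Ce^{B|\zeta|}$ on $\mathbb{C}$.

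Second, I would match the 1DLCFT with the classical Fourier transform. Since $e^{-Iuw}=\cos(uw)-I\sin(uw)\in L_I$ and $f(u)\in L_I$, the integrand of $\mathscr{F}_I(f|_{\mathbb{R}})(w)$ stays in $L_I$, and $\mathbb{R}$-linearity of $\Phi_I$ gives
\begin{equation*}
\Phi_I\!\left(\mathscr{F}_I(f|_{\mathbb{R}})(w)\right)=\frac{1}{\sqrt{2\pi}}\int_{\mathbb{R}}e^{-\mathbf{i}uw}\widetilde{f}(u)\,\mathrm{d}u=\mathscr{F}(\widetilde{f}|_{\mathbb{R}})(w),
\end{equation*}
so $\supp\mathscr{F}_I(f|_{\mathbb{R}})=\supp\mathscr{F}(\widetilde{f}|_{\mathbb{R}})$, and condition (ii) translates to the classical support condition for $\widetilde{f}$. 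Invoking the classical Paley--Wiener theorem (e.g.\ Theorem~19.3 in Rudin's \emph{Real and Complex Analysis}) then yields the equivalence of the two translated conditions together with the integral representation
\begin{equation*}
\widetilde{f}(\zeta)=\frac{1}{\sqrt{2\pi}}\int_{-B}^{B}e^{\mathbf{i}\zeta w}\mathscr{F}(\widetilde{f}|_{\mathbb{R}})(w)\,\mathrm{d}w,
\end{equation*}
and applying $\Phi_I^{-1}$ to both sides (and using that $\Phi_I^{-1}(\mathbf{i})=I$) gives \eqref{PW-LILI-HFT}.

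The substantive step is entirely packaged into the classical Paley--Wiener theorem; the main obstacle here is bookkeeping, i.e.\ verifying carefully that $\Phi_I$ really does intertwine $\overline{\partial_I}$ with $\overline{\partial}$ and $\mathscr{F}_I$ with $\mathscr{F}$. A minor subtlety is that $f$ is Clifford-valued a priori, so one must use the hypothesis $f(L_I)\subseteq L_I$ to ensure that left-multiplication by $I$ commutes with all scalars appearing in the integrals, which is what makes the identification with a classical $\mathbb{C}$-valued holomorphic function legitimate. Once this is in place, the lemma is a direct transfer of the classical theorem across $\Phi_I$.
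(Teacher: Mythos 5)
Your proposal is correct, and it takes a genuinely different (and shorter) route than the paper. The paper does not invoke the classical Paley--Wiener theorem as a black box; instead it re-runs the proof of Rudin's Theorem~19.3 inside $L_I$, writing $f_\epsilon(u)=e^{-\epsilon|u|}f(u)$, forming the ray integrals $\Phi_\delta(\xi)=\int_{\mathscr{L}_\delta}e^{-\xi z}f(z)\,\mathrm{d}z$ over $\mathscr{L}_\delta(s)=se^{I\delta}$, gluing the $\Phi_\delta$ by analytic continuation, and concluding $\lim_{\epsilon\to0}\int e^{-Iuw}f_\epsilon(u)\,\mathrm{d}u=0$ for $|w|>B$, with the easy converse direction handled by a Cauchy--Schwarz estimate. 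You instead push everything through the $\mathbb{R}$-algebra isomorphism $\Phi_I\colon L_I\to\mathbb{C}$ and cite the classical theorem once. This is clean and perfectly valid: $\Phi_I$ is an isometric algebra isomorphism fixing $\mathbb{R}$, so it intertwines $\overline{\partial_I}$ with $\overline{\partial}$, preserves moduli, and commutes with the Fourier integral because $e^{-Iuw}\in L_I$, $L_I$ is commutative, and $\Phi_I$ is $\mathbb{R}$-linear and multiplicative. The one hypothesis that makes this work --- that $f$ is $L_I$-valued, not merely $\mathbb{R}_n$-valued --- is exactly what you flag, and it is indeed the essential point. What the paper's longer route buys is self-containedness and a template whose mechanics are reused directly in the adjacent Lemma~\ref{PW-PILI-H2-FT}; what your route buys is brevity and transparency, delegating the analytic work to the textbook theorem. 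Both are sound, but they are not the same proof.
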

	\begin{proof}
		Assume that (i) holds. We will mainly adopt the technique used in \cite[Theorem 19.3]{rudin1987real}. For $\epsilon>0$ and $u\in\mathbb{R}$, let $f_{\epsilon}\left(u\right)=e^{-\epsilon|u|}f\left(u\right)$. Using $f|_{\mathbb{R}}\in L^{2}(\mathbb{R})$ and the Cauchy-Schwarz inequality, we have $f_{\epsilon}\left(u\right)\in L^{1}(\mathbb{R})$. For given $I\in\mathbb{S}$, the Fourier transform of $f_{\epsilon}$ is 
		\begin{equation*}
			\mathscr{F}_{I}\left(f_{\epsilon}\right)\left(w\right)=\frac{1}{\sqrt{2\pi}}\int_{\mathbb{R}}e^{-Iuw}f_{\epsilon}\left(u\right)\mathrm{d}u.
		\end{equation*}
		Since $$\lim_{\epsilon\to0}\lVert f_{\epsilon}-f|_{\mathbb{R}}\rVert_{L^{2}(\mathbb{R})}=0,$$ by Plancherel's theorem, we get $$\lim_{\epsilon\to0}\lVert\mathscr{F}_{I}\left(f_{\epsilon}\right)-\mathscr{F}_{I}\left(f|_{\mathbb{R}}\right)\rVert_{L^{2}(\mathbb{R})}=0.$$ It follows that if 
		\begin{equation}\label{PW-LILI-lim-FT-fe}
			\lim_{\epsilon\to0}\frac{1}{\sqrt{2\pi}}\int_{\mathbb{R}}e^{-Iuw}f_{\epsilon}\left(u\right)\mathrm{d}u=0
		\end{equation}
		holds for $w\in\mathbb{R}$ and $|w|>B$, then $\mathscr{F}_{I}\left(f|_{\mathbb{R}}\right)$ vanishes outside $\left[-B,B\right]$. It is easy to see that
		\begin{equation*}
			f\left(u\right)=\frac{1}{\sqrt{2\pi}}\int_{-B}^{B}e^{Iuw}\mathscr{F}_{I}\left(f|_{\mathbb{R}}\right)\left(w\right)\mathrm{d}w.
		\end{equation*}
		Since each side of \eqref{PW-LILI-HFT} is a holomorphic function, we can conclude that \eqref{PW-LILI-HFT} holds for every $z=u+Iv\in L_{I}$. 
		
		Now, one can show that \eqref{PW-LILI-lim-FT-fe} holds for $w\in\mathbb{R}$ and $|w|>B$. For each $\delta\in\mathbb{R}$, let 
		$$\mathscr{L}_{\delta}\lr{s}=se^{I\delta},0\leq s<\infty,$$ and put 
		\begin{equation}\label{PW-LILI-norm-xi}
			\mathscr{O}_{\delta}:=\LR{\xi:\Re\LR{\xi e^{I\delta}}>B}.
		\end{equation}
		If $\xi\in\mathscr{O}_{\delta}$, we define 
		\[\Phi_{\delta}\lr{\xi}=\int_{\mathscr{L}_{\delta}}e^{-\xi z}f\lr{z}\mathrm{d}z=e^{I\delta}\int_{0}^{\infty}e^{-\xi se^{I\delta}}f\lr{se^{I\delta}}\mathrm{d}s.\] 
		By \eqref{PW-LILI-norm-f} and \eqref{PW-LILI-norm-xi}, for $z\in\mathscr{L}_{\delta}$, we get
		\[\abs{e^{-\xi z}f\lr{z}}\leq Ce^{\Lr{B-\Re\LR{\xi e^{I\delta}}}s}.\]
		Using Fubini's theorem and Morera' theorem, we obtain that $\Phi_{\delta}$ is holomorphic on $\mathscr{O}_{\delta}$.
		
		Next, we will show that if $0<\delta_{2}-\delta_{1}<\pi$, then $\Phi_{\delta_{1}}$ and $\Phi_{\delta_{2}}$ coincide in $\mathscr{O}_{\delta_{1}}\cap\mathscr{O}_{\delta_{2}}$. In fact, we can have this conclusion by the same argument used in \cite{rudin1987real}. It follows that $\Phi_{\delta_{1}}$ and $\Phi_{\delta_{2}}$ are analytic continuations of each other. Hence we have 
		\begin{align*}
			\int_{\mathbb{R}}e^{-Iuw}f_{\epsilon}\lr{u}\mathrm{d}u=&\Phi_{0}\lr{\epsilon+Iw}-\Phi_{\pi}\lr{-\epsilon+Iw}\\
			=&\Phi_{-\frac{\pi}{2}}\lr{\epsilon+Iw}-\Phi_{-\frac{\pi}{2}}\lr{-\epsilon+Iw}
		\end{align*}
		when $w>B$ and
		\begin{align*}
			\int_{\mathbb{R}}e^{-Iuw}f_{\epsilon}\lr{u}\mathrm{d}u=&\Phi_{0}\lr{\epsilon+Iw}-\Phi_{\pi}\lr{-\epsilon+Iw}\\
			=&\Phi_{\frac{\pi}{2}}\lr{\epsilon+Iw}-\Phi_{\frac{\pi}{2}}\lr{-\epsilon+Iw}
		\end{align*}
		when $w<-B$. It is obvious that \eqref{PW-LILI-lim-FT-fe} holds for $w\in\mathbb{R}$ and $|w|>B$.
		
		Assume that (ii) holds. For any fixed $I\in\mathbb{S}$, let
		\begin{equation*}
			g\left(u+Iv\right)=\frac{1}{\sqrt{2\pi}}\int_{-B}^{B}e^{I\left(u+Iv\right)t}\mathscr{F}_{I}\left(f|_{\mathbb{R}}\right)\left(t\right)\mathrm{d}t.
		\end{equation*}
		It is obvious that $g\left(u+Iv\right)$ is holomorphic on $L_{I}$. Using the Cauchy–Schwarz inequality and Plancherel's theorem, we have
		\begin{align*}
			\lvert g\left(u+Iv\right)\rvert&=\Big\lvert\frac{1}{\sqrt{2\pi}}\int_{-B}^{B}e^{I\left(u+Iv\right)t}\mathscr{F}_{I}\left(f|_{\mathbb{R}}\right)\left(t\right)\mathrm{d}t\Big\rvert\\
			&\leq\frac{1}{\sqrt{2\pi}}\int_{-B}^{B}\lvert e^{I\left(u+Iv\right)t}\mathscr{F}_{I}\left(f|_{\mathbb{R}}\right)\left(t\right)\rvert\mathrm{d}t\\
			&\leq\frac{1}{\sqrt{2\pi}}\left(\int_{-B}^{B}\Big\lvert e^{I\left(u+Iv\right)t}\Big\rvert^{2}\mathrm{d}t\right)^{\frac{1}{2}}\left(\int_{-B}^{B}\lvert \mathscr{F}_{I}\left(f|_{\mathbb{R}}\right)\left(t\right)\rvert^{2}\mathrm{d}t\right)^{\frac{1}{2}}\\
			&\leq\frac{\sqrt{2B}}{\sqrt{2\pi}}e^{B|v|}\lVert f|_{\mathbb{R}}\rVert_{L^{2}(\mathbb{R})}\leq Ce^{B|u+Iv|}.
		\end{align*}
		Since $f(u)=g(u)$ holds for $u\in\mathbb{R}$ and both functions are holomorphic on $L_{I}$, we conclude that $f\left(u+Iv\right)=g\left(u+Iv\right)$. It follows that $|f\left(z\right)|\leq Ce^{B|z|}$ for all $z=u+Iv\in L_{I}$.
		
		The proof is completed. 
	\end{proof}
	\begin{lemma}\label{PW-NN}
		Suppose $f\in\mathcal{N}_{l}\left(\mathbb{R}^{n+1}\right)$ with $f|_{\mathbb{R}}\in L^{2}(\mathbb{R})$, and $B,C>0$ are two positive numbers. There exists a unit purely imaginary paravector $I\in\mathbb{S}$, such that $|f\left(u+Iv\right)|\leq Ce^{B|u+Iv|}$ for all $u,v\in\mathbb{R}$. Then for every $J\in\mathbb{S}$, $\mathbf{supp}\mathscr{F}_{J}(f|_{\mathbb{R}})\subset\Lr{-B,B}$ and
		\begin{equation*}
			f\left(u+Jv\right)=\frac{1}{\sqrt{2\pi}}\int_{-B}^{B}e^{J\left(u+Jv\right)t}\mathscr{F}_{J}\left(f|_{\mathbb{R}}\right)\left(t\right)\mathrm{d}t
		\end{equation*}
		for all $u,v\in\mathbb{R}$.
	\end{lemma}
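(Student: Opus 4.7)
The plan is to reduce to the one-slice result in Lemma \ref{PW-LILI} and then propagate the conclusion to every other slice via the intrinsic structure of $f\in\mathcal{N}_{l}\lr{\mathbb{R}^{n+1}}$ and the Representation Formula. First, because $f\in\mathcal{N}_{l}\lr{\mathbb{R}^{n+1}}$, the restriction $f_{I}:=f|_{L_{I}}$ is holomorphic from $L_{I}$ into $L_{I}$; moreover $f|_{\mathbb{R}}$ takes values in $\bigcap_{J\in\mathbb{S}}L_{J}=\mathbb{R}$, so $f|_{\mathbb{R}}$ is real-valued and in $L^{2}(\mathbb{R})$. Combined with the growth hypothesis $\abs{f\lr{u+Iv}}\leq Ce^{B\abs{u+Iv}}$, Lemma \ref{PW-LILI} applies to $f_{I}$ and delivers both $\supp\FT{I}{f|_{\mathbb{R}}}\subset\Lr{-B,B}$ and the inversion formula
\begin{equation*}
	f\lr{u+Iv}=\frac{1}{\sqrt{2\pi}}\int_{-B}^{B}e^{I\lr{u+Iv}t}\FT{I}{f|_{\mathbb{R}}}\lr{t}\mathrm{d}t.
\end{equation*}

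The support statement transfers to an arbitrary $J\in\mathbb{S}$ immediately: since $f|_{\mathbb{R}}$ is real-valued, expanding $e^{-Juw}=\cos\lr{uw}-J\sin\lr{uw}$ yields $\FT{J}{f|_{\mathbb{R}}}\lr{w}=A\lr{w}-JB\lr{w}$ for real functions $A,B$ independent of $J$; hence $\FT{J}{f|_{\mathbb{R}}}\lr{w}=0$ iff $A\lr{w}=B\lr{w}=0$, and the common support lies in $\Lr{-B,B}$.

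To propagate the integral representation, I would invoke Proposition \ref{prop-RF} to write $f\lr{u+Jv}=\alpha\lr{u,v}+J\beta\lr{u,v}$, where $\alpha,\beta$ are real-valued (because $f\in\mathcal{N}_{l}$) and independent of $J$. Substituting the integral formula for $f\lr{u\pm Iv}$ and using $e^{I\lr{u\pm Iv}t}=e^{\mp vt}\lr{\cos\lr{ut}\pm I\sin\lr{ut}}$, the evenness of $A$ and oddness of $B$ in $t$ collapse the $\cosh$ and $\sinh$ contributions to
\begin{equation*}
	\alpha\lr{u,v}=\frac{1}{\sqrt{2\pi}}\int_{-B}^{B}e^{-vt}\Lr{\cos\lr{ut}A\lr{t}+\sin\lr{ut}B\lr{t}}\mathrm{d}t
\end{equation*}
and an analogous expression for $\beta$; the algebraic identity
\begin{equation*}
	\lr{\cos\lr{ut}+J\sin\lr{ut}}\lr{A\lr{t}-JB\lr{t}}=\Lr{\cos\lr{ut}A+\sin\lr{ut}B}+J\Lr{\sin\lr{ut}A-\cos\lr{ut}B}
\end{equation*}
then reassembles $\alpha+J\beta$ into $\frac{1}{\sqrt{2\pi}}\int_{-B}^{B}e^{J\lr{u+Jv}t}\FT{J}{f|_{\mathbb{R}}}\lr{t}\mathrm{d}t$, as required.

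The main obstacle is the Clifford-algebraic bookkeeping in the last step: although the kernel $e^{J\lr{u+Jv}t}$ and $\FT{J}{f|_{\mathbb{R}}}$ both depend on the slice $J$, their $J$-dependences must conspire to reproduce the same real pair $(\alpha,\beta)$ extracted on the $L_{I}$ slice. This works precisely because $\alpha,\beta$ are $\mathbb{R}$-valued and $A,B$ do not depend on the chosen imaginary unit; the remaining verifications are routine.
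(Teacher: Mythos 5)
Your proposal is correct and follows essentially the same route as the paper: invoke Lemma \ref{PW-LILI} on the privileged slice $L_I$, exploit the intrinsic structure $f\in\mathcal{N}_{l}\lr{\mathbb{R}^{n+1}}$ to get that $f|_{\mathbb{R}}$ is real-valued, and then propagate to all $J\in\mathbb{S}$ via the Representation Formula. The paper also obtains that $f|_{\mathbb{R}}$ is real-valued, though it phrases it through the conjugation identity $f\lr{u-Iv}=\overline{f\lr{u+Iv}}$ rather than your cleaner observation that $f(\mathbb{R})\subseteq\bigcap_{J}L_{J}=\mathbb{R}$; these are equivalent. Your use of the parity of $A$ (even) and $B$ (odd) in $t$ to kill the cross terms is a slight streamlining of the paper's argument, which instead invokes the reality of $\Re\{f\}$ and $\Im\{f\}$ to conclude that certain $I$-components vanish; the two computations are equivalent since $A,B$ are the (real) cosine and sine transforms of the real function $f|_{\mathbb{R}}$. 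Your final reassembly identity $\lr{\cos\lr{ut}+J\sin\lr{ut}}\lr{A-JB}=\Lr{\cos\lr{ut}A+\sin\lr{ut}B}+J\Lr{\sin\lr{ut}A-\cos\lr{ut}B}$ is exactly the computation the paper performs after expanding $\ext$; and the $J$-independence of $\supp\FT{J}{f|_{\mathbb{R}}}=\supp A\cup\supp B$ gives the support claim for all $J$. No gaps.
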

	\begin{proof}
		For given $I\in\mathbb{S}$, by $f\in\mathcal{N}_{l}\left(\mathbb{R}^{n+1}\right)$, we get $f\left(L_{I}\right)\subseteq L_{I}$ and $f\left(u-Iv\right)=\overline{f\left(u+Iv\right)}$. Since $|f\left(u+Iv\right)|\leq Ce^{B|u+Iv|}$ for all $u,v\in\mathbb{R}$ and $f|_{\mathbb{R}}\in L^{2}(\mathbb{R})$, by Lemma \ref{PW-LILI}, we have $\mathbf{supp}\mathscr{F}_{I}(f|_{\mathbb{R}})\subset\left[-B,B\right]$ and
		\begin{equation}\label{PW-NN-f}
			f\left(u+Iv\right)=\frac{1}{\sqrt{2\pi}}\int_{-B}^{B}e^{I\left(u+Iv\right)t}\mathscr{F}_{I}\left(f|_{\mathbb{R}}\right)\left(t\right)\mathrm{d}t.
		\end{equation}
		Taking the conjugation to both sides of \eqref{PW-NN-f}, we get
		\begin{align}\label{PW-NN-cf0}
			\overline{f\left(u+Iv\right)}=&\overline{\frac{1}{\sqrt{2\pi}}\int_{-B}^{B}e^{I\left(u+Iv\right)t}\mathscr{F}_{I}\left(f|_{\mathbb{R}}\right)\left(t\right)\mathrm{d}t}\notag\\
			=&\frac{1}{\sqrt{2\pi}}\int_{-B}^{B}e^{-I\left(u-Iv\right)t}\overline{\mathscr{F}_{I}\left(f|_{\mathbb{R}}\right)\left(t\right)}\mathrm{d}t\notag\\
			=&\frac{1}{\sqrt{2\pi}}\int_{-B}^{B}e^{I\left(u-Iv\right)s}\overline{\mathscr{F}_{I}\left(f|_{\mathbb{R}}\right)\left(-s\right)}\mathrm{d}s.
		\end{align}
		Using the fact that $f\left(u-Iv\right)=\overline{f\left(u+Iv\right)}$, we have  $f\left(u\right)=\overline{f\left(u\right)}$ for $v=0$. It means that $f|_{\mathbb{R}}$ is real-valued. It follows that
		\begin{align}\label{PW-NN-XTFT}
			\overline{\frac{1}{\sqrt{2\pi}}\int_{-T}^{T}e^{-Iu\left(-w\right)}f\left(u\right)\mathrm{d}u}&=\frac{1}{\sqrt{2\pi}}\int_{-T}^{T}e^{-Iuw}\overline{f\left(u\right)}\mathrm{d}u\notag\\
			&=\frac{1}{\sqrt{2\pi}}\int_{-T}^{T}e^{-Iuw}f\left(u\right)\mathrm{d}u.
		\end{align}
		Since $$\lim_{T\to\infty}\Big\lVert \mathscr{F}_{I}\left(f|_{\mathbb{R}}\right)\left(t\right)-\frac{1}{\sqrt{2\pi}}\int_{-T}^{T}e^{-Iuw}f\left(u\right)\mathrm{d}u\Big\rVert_{L^{2}(\mathbb{R})}=0$$ and \eqref{PW-NN-XTFT}, we have 
		\begin{equation}\label{PW-NN-CFT}
			\overline{\mathscr{F}_{I}\left(f|_{\mathbb{R}}\right)\left(-s\right)}=\mathscr{F}_{I}\left(f|_{\mathbb{R}}\right)\left(s\right).
		\end{equation}
		Substituting \eqref{PW-NN-CFT} into \eqref{PW-NN-cf0}, we obtain
		\begin{equation*}\label{PW-NN-cf}
			\overline{f\left(u+Iv\right)}=\frac{1}{\sqrt{2\pi}}\int_{-B}^{B}e^{I\left(u-Iv\right)s}\mathscr{F}_{I}\left(f|_{\mathbb{R}}\right)\left(s\right)\mathrm{d}s=f\left(u-Iv\right).
		\end{equation*}
		Since $f\left(L_{I}\right)\subseteq L_{I}$, $f$ can be written as $f=\Re\left\{f\right\}+I\Im\left\{f\right\}$ where $\Re\left\{f\right\}$ and $\Im\left\{f\right\}$ are real-valued. It follows that 
		\begin{align}\label{PW-NN-2Ref}
			\Re\left\{f\left(u+Iv\right)\right\}=&\frac{1}{2}\lr{\overline{f\left(u+Iv\right)}+f\left(u+Iv\right)}\notag\\
			=&\frac{1}{2\sqrt{2\pi}}\int_{-B}^{B}e^{I\left(u-Iv\right)t}\mathscr{F}_{I}\left(f|_{\mathbb{R}}\right)\left(t\right)\mathrm{d}t\notag\\
			&+\frac{1}{2\sqrt{2\pi}}\int_{-B}^{B}e^{I\left(u+Iv\right)t}\mathscr{F}_{I}\left(f|_{\mathbb{R}}\right)\left(t\right)\mathrm{d}t\notag\\
			=&\frac{1}{2\sqrt{2\pi}}\int_{-B}^{B}\left[\cos\left(ut\right)+I\sin\left(ut\right)\right]\left(e^{vt}+e^{-vt}\right)\mathscr{F}_{I}\left(f|_{\mathbb{R}}\right)\left(t\right)\mathrm{d}t
		\end{align}
		and
		\begin{align}\label{PW-NN--2IImf}
			\Im\left\{f\left(u+Iv\right)\right\}=&\frac{I}{2}\lr{\overline{f\left(u+Iv\right)}-f\left(u+Iv\right)}\notag\\
			=&\frac{I}{2\sqrt{2\pi}}\int_{-B}^{B}\left[\cos\left(ut\right)+I\sin\left(ut\right)\right]\left(e^{vt}-e^{-vt}\right)\mathscr{F}_{I}\left(f|_{\mathbb{R}}\right)\left(t\right)\mathrm{d}t.
		\end{align}
		Notice that for given $T>0$,
		\begin{align*}
			&\frac{1}{\sqrt{2\pi}}\int_{-T}^{T}e^{-Iuw}f\left(u\right)\mathrm{d}u\\
			=&\frac{1}{\sqrt{2\pi}}\int_{-T}^{T}\cos\left(uw\right)f\left(u\right)\mathrm{d}u-I\frac{1}{\sqrt{2\pi}}\int_{-T}^{T}\sin\left(uw\right)f\left(u\right)\mathrm{d}u
		\end{align*}
		where $$\frac{1}{\sqrt{2\pi}}\int_{-T}^{T}\cos\left(uw\right)f\left(u\right)\mathrm{d}u$$ and $$\frac{1}{\sqrt{2\pi}}\int_{-T}^{T}\sin\left(uw\right)f\left(u\right)\mathrm{d}u$$ are real-valued functions. Since $$\lim_{T\to\infty}\Big\lVert \mathscr{F}_{I}\left(f|_{\mathbb{R}}\right)\left(w\right)-\frac{1}{\sqrt{2\pi}}\int_{-T}^{T}e^{-Iuw}f\left(u\right)\mathrm{d}u\Big\rVert_{L^{2}(\mathbb{R})}=0,$$ we have that $\mathscr{F}_{I}\left(f|_{\mathbb{R}}\right)$ is $L_{I}$-valued. So, $\mathscr{F}_{I}\left(f|_{\mathbb{R}}\right)$ can be written as
		\begin{equation}\label{PW-NN-FTLI}
			\mathscr{F}_{I}\left(f|_{\mathbb{R}}\right)=\Re\left\{\mathscr{F}_{I}\left(f|_{\mathbb{R}}\right)\right\}+I\Im\left\{\mathscr{F}_{I}\left(f|_{\mathbb{R}}\right)\right\}.
		\end{equation}
		It is easy to see that
		\begin{equation*}
			\lim_{T\to\infty}\Big\lVert \Re\left\{\mathscr{F}_{I}\left(f|_{\mathbb{R}}\right)\lr{w}\right\}-\frac{1}{\sqrt{2\pi}}\int_{-T}^{T}\cos\lr{uw}f\left(u\right)\mathrm{d}u\Big\rVert_{L^{2}(\mathbb{R})}=0
		\end{equation*}
		and
		\begin{equation*}
			\lim_{T\to\infty}\Big\lVert \Im\left\{\mathscr{F}_{I}\left(f|_{\mathbb{R}}\right)\lr{w}\right\}+\frac{1}{\sqrt{2\pi}}\int_{-T}^{T}\sin\lr{uw}f\left(u\right)\mathrm{d}u\Big\rVert_{L^{2}(\mathbb{R})}=0.
		\end{equation*}
		It means that $\Re\left\{\mathscr{F}_{I}\left(f|_{\mathbb{R}}\right)\right\}$ and $\Im\left\{\mathscr{F}_{I}\left(f|_{\mathbb{R}}\right)\right\}$ are independent of $I$. In other words, for every $I,J\in\mathbb{S}$, we have $\Re\left\{\mathscr{F}_{I}\left(f|_{\mathbb{R}}\right)\right\}=\Re\left\{\mathscr{F}_{J}\left(f|_{\mathbb{R}}\right)\right\}$ and $\Im\left\{\mathscr{F}_{I}\left(f|_{\mathbb{R}}\right)\right\}=\Im\left\{\mathscr{F}_{J}\left(f|_{\mathbb{R}}\right)\right\}$.
		Substituting \eqref{PW-NN-FTLI} into \eqref{PW-NN-2Ref} and \eqref{PW-NN--2IImf}, we obtain
		\begin{align*}
			&\Re\left\{f\left(u+Iv\right)\right\}\\
			=&\frac{1}{2\sqrt{2\pi}}\int_{-B}^{B}\left[\cos\left(ut\right)+I\sin\left(ut\right)\right]\left(e^{vt}+e^{-vt}\right)\left[\Re\left\{\mathscr{F}_{I}\left(f|_{\mathbb{R}}\right)\left(t\right)\right\}+I\Im\left\{\mathscr{F}_{I}\left(f|_{\mathbb{R}}\right)\left(t\right)\right\}\right]\mathrm{d}t\\
			=&\frac{1}{2\sqrt{2\pi}}\int_{-B}^{B}\left(e^{vt}+e^{-vt}\right)\left[\cos\left(ut\right)\Re\left\{\mathscr{F}_{I}\left(f|_{\mathbb{R}}\right)\left(t\right)\right\}-\sin\left(ut\right)\Im\left\{\mathscr{F}_{I}\left(f|_{\mathbb{R}}\right)\left(t\right)\right\}\right]\mathrm{d}t\\
			&+I\frac{1}{2\sqrt{2\pi}}\int_{-B}^{B}\left(e^{vt}+e^{-vt}\right)\left[\cos\left(ut\right)\Im\left\{\mathscr{F}_{I}\left(f|_{\mathbb{R}}\right)\left(t\right)\right\}+\sin\left(ut\right)\Re\left\{\mathscr{F}_{I}\left(f|_{\mathbb{R}}\right)\left(t\right)\right\}\right]\mathrm{d}t
		\end{align*}
		and
		\begin{align*}
			&\Im\left\{f\left(u+Iv\right)\right\}\\
			=&\frac{I}{2\sqrt{2\pi}}\int_{-B}^{B}\left[\cos\left(ut\right)+I\sin\left(ut\right)\right]\left(e^{vt}-e^{-vt}\right)\left[\Re\left\{\mathscr{F}_{I}\left(f|_{\mathbb{R}}\right)\left(t\right)\right\}+I\Im\left\{\mathscr{F}_{I}\left(f|_{\mathbb{R}}\right)\left(t\right)\right\}\right]\mathrm{d}t\\
			=&\frac{I}{2\sqrt{2\pi}}\int_{-B}^{B}\left(e^{vt}-e^{-vt}\right)\left[\cos\left(ut\right)\Re\left\{\mathscr{F}_{I}\left(f|_{\mathbb{R}}\right)\left(t\right)\right\}-\sin\left(ut\right)\Im\left\{\mathscr{F}_{I}\left(f|_{\mathbb{R}}\right)\left(t\right)\right\}\right]\mathrm{d}t\\
			&-\frac{1}{2\sqrt{2\pi}}\int_{-B}^{B}\left(e^{vt}-e^{-vt}\right)\left[\cos\left(ut\right)\Im\left\{\mathscr{F}_{I}\left(f|_{\mathbb{R}}\right)\left(t\right)\right\}+\sin\left(ut\right)\Re\left\{\mathscr{F}_{I}\left(f|_{\mathbb{R}}\right)\left(t\right)\right\}\right]\mathrm{d}t
		\end{align*}
		respectively. Using the fact that $\Re\left\{f\right\}$ and $\Im\left\{f\right\}$ are real-valued function, we get 
		\begin{equation*}
			\int_{-B}^{B}\left(e^{vt}+e^{-vt}\right)\left[\cos\left(ut\right)\Im\left\{\mathscr{F}_{I}\left(f|_{\mathbb{R}}\right)\left(t\right)\right\}+\sin\left(ut\right)\Re\left\{\mathscr{F}_{I}\left(f|_{\mathbb{R}}\right)\left(t\right)\right\}\right]\mathrm{d}t=0,
		\end{equation*}
		and
		\begin{equation*}
			\int_{-B}^{B}\left(e^{vt}-e^{-vt}\right)\left[\cos\left(ut\right)\Re\left\{\mathscr{F}_{I}\left(f|_{\mathbb{R}}\right)\left(t\right)\right\}-\sin\left(ut\right)\Im\left\{\mathscr{F}_{I}\left(f|_{\mathbb{R}}\right)\left(t\right)\right\}\right]\mathrm{d}t=0.
		\end{equation*}
		It follows that
		\begin{equation*}
			\Re\left\{f\left(u+Iv\right)\right\}=\frac{1}{\sqrt{2\pi}}\int_{-B}^{B}e^{-vt}\left[\cos\left(ut\right)\Re\left\{\mathscr{F}_{I}\left(f|_{\mathbb{R}}\right)\left(t\right)\right\}-\sin\left(ut\right)\Im\left\{\mathscr{F}_{I}\left(f|_{\mathbb{R}}\right)\left(t\right)\right\}\right]\mathrm{d}t
		\end{equation*}
		and
		\begin{equation*}
			\Im\left\{f\left(u+Iv\right)\right\}=\frac{1}{\sqrt{2\pi}}\int_{-B}^{B}e^{-vt}\left[\cos\left(ut\right)\Im\left\{\mathscr{F}_{I}\left(f|_{\mathbb{R}}\right)\left(t\right)\right\}+\sin\left(ut\right)\Re\left\{\mathscr{F}_{I}\left(f|_{\mathbb{R}}\right)\left(t\right)\right\}\right]\mathrm{d}t.
		\end{equation*}
		By Proposition \ref{prop-RF}, for every $J\in\mathbb{S}$, we have
		\begin{align*}
			&f\left(u+Jv\right)\\
			=&\frac{1}{2}\left[f\left(u-Iv\right)+f\left(u+Iv\right)\right]+\frac{JI}{2}\left[f\left(u-Iv\right)-f\left(u+Iv\right)\right]\\
			=&\frac{1}{2}\left[\overline{f\left(u+Iv\right)}+f\left(u+Iv\right)\right]+\frac{JI}{2}\left[\overline{f\left(u+Iv\right)}-f\left(u+Iv\right)\right]\\
			=&\Re\left\{f\left(u+Iv\right)\right\}+J\Im\left\{f\left(u+Iv\right)\right\}\\
			=&\frac{1}{\sqrt{2\pi}}\int_{-B}^{B}e^{-vt}\left[\cos\left(ut\right)\Re\left\{\mathscr{F}_{I}\left(f|_{\mathbb{R}}\right)\left(t\right)\right\}-\sin\left(ut\right)\Im\left\{\mathscr{F}_{I}\left(f|_{\mathbb{R}}\right)\left(t\right)\right\}\right]\mathrm{d}t\\
			&+J\frac{1}{\sqrt{2\pi}}\int_{-B}^{B}e^{-vt}\left[\cos\left(ut\right)\Im\left\{\mathscr{F}_{I}\left(f|_{\mathbb{R}}\right)\left(t\right)\right\}+\sin\left(ut\right)\Re\left\{\mathscr{F}_{I}\left(f|_{\mathbb{R}}\right)\left(t\right)\right\}\right]\mathrm{d}t\\
			=&\frac{1}{\sqrt{2\pi}}\int_{-B}^{B}e^{-vt}\Big\{\left[\cos\left(ut\right)+J\sin\left(ut\right)\right]\Re\left\{\mathscr{F}_{I}\left(f|_{\mathbb{R}}\right)\left(t\right)\right\}\\
			&+\left[\cos\left(ut\right)+J\sin\left(ut\right)\right]J\Im\left\{\mathscr{F}_{I}\left(f|_{\mathbb{R}}\right)\left(t\right)\right\}\Big\}\mathrm{d}t\\
			=&\frac{1}{\sqrt{2\pi}}\int_{-B}^{B}e^{-vt}e^{Jut}\left[\Re\left\{\mathscr{F}_{I}\left(f|_{\mathbb{R}}\right)\left(t\right)\right\}+J\Im\left\{\mathscr{F}_{I}\left(f|_{\mathbb{R}}\right)\left(t\right)\right\}\right]\mathrm{d}t\\
			=&\frac{1}{\sqrt{2\pi}}\int_{-B}^{B}e^{J\left(u+Jv\right)t}\mathscr{F}_{J}\left(f|_{\mathbb{R}}\right)\left(t\right)\mathrm{d}t
		\end{align*}
		and $\lvert\mathscr{F}_{J}\left(f|_{\mathbb{R}}\right)(t)\rvert^{2}=\lvert\mathscr{F}_{I}\left(f|_{\mathbb{R}}\right)(t)\rvert^{2}$. Thus, we have $\mathbf{supp}\mathscr{F}_{J}\left(f|_{\mathbb{R}}\right)=\mathbf{supp}\mathscr{F}_{I}\left(f|_{\mathbb{R}}\right)\subset\Lr{-B,B}$.
	\end{proof}
	
	Now, we can prove Theorem \ref{PW-Rn+1Rn}.
	\begin{proof}[Proof of Theorem \ref{PW-Rn+1Rn}]
		Assume that (i) holds. Let $I_{1}, \dots, I_{n}$ be an orthonormal basis of $\mathbb{R}_{n}$. Since $f$ is a left slice monogenic function, using Proposition \ref{prop-NE}, for every $J\in\mathbb{S}$, we have
		\begin{equation}\label{PW-Rn+1Rn-f}
			f\left(u+Jv\right)=\sum_{A}h_{A}\left(u+Jv\right)I_{A}
		\end{equation}
		where $h_{A}\in\mathcal{N}_{l}$ and $A\in\mathcal{P}_{<}\lr{\left\{1,\dots,n\right\}}$. Since $\lvert f\left(u+Jv\right)\rvert\leq Ce^{A|u+Jv|}$, by \eqref{norm-hA}, we obtain $\lvert h_{A}\left(u+Jv\right)\rvert\leq Ce^{B|u+Jv|}$. Since $\abs{f|_{\mathbb{R}}}^2=\sum_{A}\abs{h_{A}|_{\mathbb{R}}}^2$ and $f|_{\mathbb{R}}\in L^{2}(\mathbb{R})$, we get $h_{A}|_{\mathbb{R}}\in L^{2}(\mathbb{R})$. Using Lemma \ref{PW-NN}, we have $\mathbf{supp}\mathscr{F}_{J}\left(h_{A}|_{\mathbb{R}}\right)\subset\Lr{-B,B}$ and 
		\begin{equation}\label{PW-Rn+1Rn-hA}
			h_{A}\left(u+Jv\right)=\frac{1}{\sqrt{2\pi}}\int_{-B}^{B}e^{J\left(u+Jv\right)t}\mathscr{F}_{J}\left(h_{A}|_{\mathbb{R}}\right)\left(t\right)\mathrm{d}t
		\end{equation}
		Combining \eqref{PW-Rn+1Rn-f} and \eqref{PW-Rn+1Rn-hA}, we have
		\begin{align*}
			f\left(u+Jv\right)=&\sum_{A}\Big[\frac{1}{\sqrt{2\pi}}\int_{-B}^{B}e^{J\left(u+Jv\right)t}\mathscr{F}_{J}\left(h_{A}|_{\mathbb{R}}\right)\left(t\right)\mathrm{d}t\Big]I_{A}\\
			=&\frac{1}{\sqrt{2\pi}}\int_{-B}^{B}e^{J\left(u+Jv\right)t}\sum_{A}\mathscr{F}_{J}\left(h_{A}|_{\mathbb{R}}\right)\left(t\right)I_{A}\mathrm{d}t\\
			=&\frac{1}{\sqrt{2\pi}}\int_{-B}^{B}e^{J\left(u+Jv\right)t}\mathscr{F}_{J}\left(\sum_{A}h_{A}|_{\mathbb{R}}I_{A}\right)\left(t\right)\mathrm{d}t\\
			=&\frac{1}{\sqrt{2\pi}}\int_{-B}^{B}e^{J\left(u+Jv\right)t}\mathscr{F}_{J}\left(f|_{\mathbb{R}}\right)\left(t\right)\mathrm{d}t
		\end{align*}
		and $\mathscr{F}_{J}\left(f|_{\mathbb{R}}\right)\left(t\right)=\sum_{A}\mathscr{F}_{J}\left(h_{A}|_{\mathbb{R}}\right)\left(t\right)I_{A}$. Thus, we have $\mathbf{supp}\mathscr{F}_{J}\left(f|_{\mathbb{R}}\right)=\bigcup_{A}\mathbf{supp}\mathscr{F}_{I}(h_{A})\subset\Lr{-B,B}$.
		
		Assume that (ii) holds. For given $I\in\mathbb{S}$, let
		\begin{equation*}
			g\left(u+Iv\right)=\frac{1}{\sqrt{2\pi}}\int_{-B}^{B}e^{I\left(u+Iv\right)t}\mathscr{F}_{I}\left(f|_{\mathbb{R}}\right)\left(t\right)\mathrm{d}t.
		\end{equation*}
		It is easy to see that $g\left(u+Iv\right)$ is holomorphic on $L_{I}$. Using the Cauchy–Schwarz inequality and Plancherel's theorem, we get
		\begin{align*}
			\lvert g\left(u+Iv\right)\rvert&=\Big\lvert\frac{1}{\sqrt{2\pi}}\int_{-B}^{B}e^{I\left(u+Iv\right)t}\mathscr{F}_{I}\left(f|_{\mathbb{R}}\right)\left(t\right)\mathrm{d}t\Big\rvert\\
			&\leq\frac{1}{\sqrt{2\pi}}\int_{-B}^{B}\lvert e^{I\left(u+Iv\right)t}\mathscr{F}_{I}\left(f|_{\mathbb{R}}\right)\left(t\right)\rvert\mathrm{d}t\\
			&\leq\frac{1}{\sqrt{2\pi}}\left(\int_{-B}^{B}\Big\lvert e^{I\left(u+Iv\right)t}\Big\rvert^{2}\mathrm{d}t\right)^{\frac{1}{2}}\left(\int_{-B}^{B}\lvert \mathscr{F}_{I}\left(f|_{\mathbb{R}}\right)\left(t\right)\rvert^{2}\mathrm{d}t\right)^{\frac{1}{2}}\\
			&\leq\frac{\sqrt{2B}}{\sqrt{2\pi}}e^{B|v|}\lVert f\rVert_{L^{2}(\mathbb{R})}\leq Ce^{B|u+Iv|}.
		\end{align*}
		Since $f\left(u\right)=g\left(u\right)$ and both functions are holomorphic on $L_{I}$, we infer that $f\left(u+Iv\right)=g\left(u+Iv\right)$. By Proposition \ref{prop-RF}, for every $J\in\mathbb{S}$, we have
		\begin{equation*}
			f\left(u+Jv\right)=\frac{1+JI}{2}f\left(u-Iv\right)+\frac{1-JI}{2}f\left(u+Iv\right).
		\end{equation*}
		Hence, we get
		\begin{align*}
			\lvert f\left(u+Jv\right)\rvert&\leq\Big\lvert f\left(u-Iv\right)\Big\rvert+\Big\lvert f\left(u+Iv\right)\Big\rvert\\
			&\leq Ce^{A|u-Iv|}+Ce^{A|u+Iv|}\\
			&=2Ce^{A|u+Iv|}\\
			&=C'e^{A|u+Iv|}.
		\end{align*}
		
		The proof is completed.
	\end{proof}
	Moreover, as an application, we can give the reproducing kernel for the left slice monogenic Paley-Wiener space. In the following, for $B>0$, we define the left slice monogenic Paley-Wiener space as 
	\begin{align*}
		&PW_{B}^{2}\lr{\mathbb{R}^{n+1}}\\
		=&\LR{f\in\mathcal{SM}_{l}\lr{\mathbb{R}^{n+1}},f|_{\mathbb{R}}\in L^{2}(\mathbb{R}),\supp\FT{I}{f|_{\mathbb{R}}}\subset\Lr{-B,B}\ \text{for}\ \text{every}\ I\in\mathbb{S}}.
	\end{align*}
	
	For any fixed $I\in\mathbb{S}$ and $\xi+I\eta\in L_{I}$,  we define 
	\begin{equation*}
		\mathscr{K}_{B,L_{I}}\lr{u+Iv,\xi+I\eta}=\frac{1}{2\pi}\int_{-B}^{B}e^{I\Lr{\lr{u+Iv}-\lr{\xi-I\eta}}w}\mathrm{d}w,
	\end{equation*}
	for all $u+Iv\in L_{I}.$ By Proposition \ref{prop-ext}, for every $J\in\mathbb{S}$, we define
	\begin{align}\label{RK-KB}
		\begin{split}
			&\mathscr{K}_{B,\mathbb{R}^{n+1}}\lr{u+Jv,\xi+I\eta}\\
			=&\ \ext\lr{\mathscr{K}_{B,L_{I}}\lr{u+Iv,\xi+I\eta}}\lr{u+Jv}\\
			=&\frac{1}{2}\Lr{\frac{1}{2\pi}\int_{-B}^{B}e^{I\Lr{\lr{u-Iv}-\lr{\xi-I\eta}}w}\mathrm{d}w+\frac{1}{2\pi}\int_{-B}^{B}e^{I\Lr{\lr{u+Iv}-\lr{\xi-I\eta}}w}\mathrm{d}w}\\
			&+\frac{JI}{2}\Lr{\frac{1}{2\pi}\int_{-B}^{B}e^{I\Lr{\lr{u-Iv}-\lr{\xi-I\eta}}w}\mathrm{d}w-\frac{1}{2\pi}\int_{-B}^{B}e^{I\Lr{\lr{u+Iv}-\lr{\xi-I\eta}}w}\mathrm{d}w}\\
			=&\frac{1}{2\pi}\int_{-B}^{B}\frac{1}{2}\Lr{e^{I\lr{u-Iv}w}+e^{I\lr{u+Iv}w}}e^{-I\lr{\xi-I\eta}w}\mathrm{d}w\\
			&+\frac{1}{2\pi}\int_{-B}^{B}\frac{JI}{2}\Lr{e^{I\lr{u-Iv}w}-e^{I\lr{u+Iv}w}}e^{-I\lr{\xi-I\eta}w}\mathrm{d}w\\
			=&\frac{1}{2\pi}\int_{-B}^{B}\ext\lr{e^{I\lr{u+Iv}w}}\lr{u+Jv}e^{-I\lr{\xi-I\eta}w}\mathrm{d}w.
		\end{split}
	\end{align}
	When $\eta=0$, it follows from \eqref{RK-KB} that
	\begin{align*}
		\mathscr{K}_{B,\mathbb{R}^{n+1}}\lr{u+Jv,\xi}=&\frac{1}{2}\Lr{\frac{1}{2\pi}\int_{-B}^{B}e^{I\lr{u-Iv-\xi}w}\mathrm{d}w+\frac{1}{2\pi}\int_{-B}^{B}e^{I\lr{u+Iv-\xi}w}\mathrm{d}w}\\
		&+\frac{JI}{2}\Lr{\frac{1}{2\pi}\int_{-B}^{B}e^{I\lr{u-Iv-\xi}w}\mathrm{d}w-\frac{1}{2\pi}\int_{-B}^{B}e^{I\lr{u+Iv-\xi}w}\mathrm{d}w}.
	\end{align*}
	Notice that
	\begin{equation*}
		\overline{\frac{1}{2\pi}\int_{-B}^{B}e^{I\lr{u-Iv-\xi}w}\mathrm{d}w}=\frac{1}{2\pi}\int_{-B}^{B}e^{I\lr{u+Iv-\xi}w}\mathrm{d}w.
	\end{equation*}
	Hence, we get
	\begin{align*}
		\mathscr{K}_{B,\mathbb{R}^{n+1}}\lr{u+Jv,\xi}=&\Re\LR{\frac{1}{2\pi}\int_{-B}^{B}e^{I\lr{u+Iv-\xi}w}\mathrm{d}w}\\
		&+J\Im\LR{\frac{1}{2\pi}\int_{-B}^{B}e^{I\lr{u+Iv-\xi}w}\mathrm{d}w}\\
		=&\frac{1}{2\pi}\int_{-B}^{B}e^{J\lr{u-\xi+Jv}w}\mathrm{d}w.
	\end{align*}
	
	Now, we give the reproducing kernel formula of $PW_{B}^{2}\lr{\mathbb{R}^{n+1}}$ as follows.
	\begin{corollary}
		
		Let $f\in PW_{B}^{2}\lr{\mathbb{R}^{n+1}}$. Then for every $\mathbf{x}\in\mathbb{R}^{n+1}$, we have
		\begin{equation*}
			f\lr{\mathbf{x}}=\int_{\mathbb{R}}\mathscr{K}_{B,\mathbb{R}^{n+1}}\lr{\mathbf{x},\xi}f\lr{\xi}\mathrm{d}\xi.
		\end{equation*}
		where
		\begin{equation*}
			\mathscr{K}_{B,\mathbb{R}^{n+1}}\lr{\mathbf{x},\xi}=\frac{1}{2\pi}\int_{-B}^{B}e^{I_{\mathbf{X}}\lr{\mathbf{x}-\xi}w}\mathrm{d}w.
		\end{equation*}    
	\end{corollary}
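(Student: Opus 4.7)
The plan is to derive the reproducing formula directly from Theorem \ref{PW-Rn+1Rn} by substituting the integral representation of the 1DLCFT and interchanging the order of integration, following the same template used for the Hardy-space reproducing kernel proved above.

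First I would fix $\mathbf{x}\in\mathbb{R}^{n+1}$ and write $\mathbf{x}=u+I_{\mathbf{x}}v$ (with any $I\in\mathbb{S}$ replacing $I_{\mathbf{x}}$ in the degenerate case $\mathbf{x}\in\mathbb{R}$, where $\mathscr{K}_{B,\mathbb{R}^{n+1}}(\mathbf{x},\xi)$ reduces to the $I$-independent sinc expression $\frac{\sin(B(u-\xi))}{\pi(u-\xi)}$). Applying Theorem \ref{PW-Rn+1Rn} with $J=I_{\mathbf{x}}$ gives
\[f(\mathbf{x})=\frac{1}{\sqrt{2\pi}}\int_{-B}^{B}e^{I_{\mathbf{x}}\mathbf{x}w}\FT{I_{\mathbf{x}}}{f|_{\mathbb{R}}}(w)\,\mathrm{d}w.\]
Substituting the defining integral $\FT{I_{\mathbf{x}}}{f|_{\mathbb{R}}}(w)=\frac{1}{\sqrt{2\pi}}\int_{\mathbb{R}}e^{-I_{\mathbf{x}}\xi w}f(\xi)\,\mathrm{d}\xi$ and interchanging the $w$- and $\xi$-integrals via Fubini, together with the observation that $I_{\mathbf{x}}\mathbf{x}w$ and $-I_{\mathbf{x}}\xi w$ are both polynomials in the single imaginary unit $I_{\mathbf{x}}$ and therefore commute (so that $e^{I_{\mathbf{x}}\mathbf{x}w}e^{-I_{\mathbf{x}}\xi w}=e^{I_{\mathbf{x}}(\mathbf{x}-\xi)w}$), I would obtain
\[f(\mathbf{x})=\int_{\mathbb{R}}\lr{\frac{1}{2\pi}\int_{-B}^{B}e^{I_{\mathbf{x}}(\mathbf{x}-\xi)w}\,\mathrm{d}w}f(\xi)\,\mathrm{d}\xi=\int_{\mathbb{R}}\mathscr{K}_{B,\mathbb{R}^{n+1}}(\mathbf{x},\xi)f(\xi)\,\mathrm{d}\xi,\]
which is exactly the claimed formula, with the inner integral matching the kernel displayed in the statement (and also the intrinsic extension of the $L_I$-kernel computed just before the corollary).

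The main obstacle will be the rigorous justification of the Fubini interchange, because $f|_{\mathbb{R}}\in L^{2}(\mathbb{R})$ need not lie in $L^{1}(\mathbb{R})$ and the kernel $\mathscr{K}_{B,\mathbb{R}^{n+1}}(\mathbf{x},\cdot)$ is sinc-like in $\xi$, hence in $L^{2}$ but not in $L^{1}$; the double integral is therefore not absolutely convergent. The standard way around this is to approximate $f|_{\mathbb{R}}$ by the truncations $f_{n}:=f|_{\mathbb{R}}\chi_{[-n,n]}\in L^{1}\cap L^{2}(\mathbb{R})$. For each $f_{n}$ the double integral is absolutely convergent on $[-n,n]\times[-B,B]$, so Fubini applies and the identity above holds with $f_{n}$ in place of $f$. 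Passing to the limit $n\to\infty$, the left-hand $\xi$-integral converges to $\int_{\mathbb{R}}\mathscr{K}_{B,\mathbb{R}^{n+1}}(\mathbf{x},\xi)f(\xi)\,\mathrm{d}\xi$ by Cauchy--Schwarz (since $\mathscr{K}_{B,\mathbb{R}^{n+1}}(\mathbf{x},\cdot)\in L^{2}(\mathbb{R})$ and $f_{n}\to f|_{\mathbb{R}}$ in $L^{2}$), while the right-hand $w$-integral converges to $f(\mathbf{x})$ by Plancherel's theorem (yielding $\FT{I_{\mathbf{x}}}{f_{n}}\to\FT{I_{\mathbf{x}}}{f|_{\mathbb{R}}}$ in $L^{2}$, hence in $L^{1}(-B,B)$ by compactness of the support interval) combined with Theorem \ref{PW-Rn+1Rn}. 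Matching the two limits closes the argument.
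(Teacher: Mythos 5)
Your proposal matches the paper's own proof: apply Theorem \ref{PW-Rn+1Rn} with $J=I_{\mathbf{x}}$, substitute the integral definition of $\FT{I_{\mathbf{x}}}{f|_{\mathbb{R}}}$, and interchange the order of integration to recover the kernel. The paper simply invokes Fubini's theorem without further comment, whereas you correctly observe that the double integral is not absolutely convergent (since $f|_{\mathbb{R}}\in L^2$ only and the kernel is sinc-like) and supply the standard $L^1\cap L^2$ truncation-and-limit argument to make the interchange rigorous — a worthwhile refinement, but the same route.
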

	\begin{proof}
		Using Theorem \ref{PW-Rn+1Rn} and Fubini's theorem, for every $J\in\mathbb{S}$, we have
		\begin{align*}
			f\lr{u+Jv}=&\frac{1}{\sqrt{2\pi}}\int_{-B}^{B}e^{J\lr{u+Jv}w}\FT{J}{f|_{\mathbb{R}}}\lr{w}\mathrm{d}w\\
			=&\frac{1}{2\pi}\int_{\mathbb{R}}\int_{-B}^{B}e^{J\lr{u+Jv}w}e^{-J\xi w}\mathrm{d}wf\lr{\xi}\mathrm{d}\xi\\
			=&\int_{\mathbb{R}}\mathscr{K}_{B,\mathbb{R}^{n+1}}\lr{u+Jv,\xi}f\lr{\xi}\mathrm{d}\xi.
		\end{align*}
	\end{proof}
	\subsection{Analogue in Bergman space}
	
	In this part we will prove the Paley-Wiener theorem for the functions in the left slice monogenic Bergman space in the right half-space. For the analogous result in the settings of complex analysis and Clifford algebra, we refer to \cite{bekolle2003lecture,dang2020fourier}.\\
	Define
	\begin{align*}
		&\mathcal{B}_{\text{slice}}^{p}\lr{\Pi_{+,I}}\\
		=&\LR{f\in\mathcal{SM}_{l}\lr{\mathbb{R}_{+}^{n+1}}:\norm{f}_{\mathcal{B}_{\text{slice}}^{p}\lr{\Pi_{+,I}}}^{p}:=\int_{0}^{\infty}\int_{-\infty}^{\infty}\Abs{f\lr{u+Iv}}^{p}\mathrm{d}v\mathrm{d}u<\infty},
	\end{align*}
	where $1\leq p<\infty$. For every $I,J\in\mathbb{S}$, by Proposition \ref{prop-RF}, we have
	\begin{equation}\label{abs-f}
		\Abs{f\lr{u+Iv}}\leq \frac{\sqrt{2}}{2}\lr{\Abs{f\lr{u+Jv}}+\Abs{f\lr{u-Jv}}}.
	\end{equation}
	So, we obtain
	\begin{align*}
		\int_{0}^{\infty}\int_{-\infty}^{\infty}\Abs{f\lr{u+Iv}}^{p}\mathrm{d}v \mathrm{d}u\leq& 2^{-\frac{p}{2}}\int_{0}^{\infty}\int_{-\infty}^{\infty}\Abs{f\lr{u+Jv}}^{p}\mathrm{d}v \mathrm{d}u\\
		&+2^{-\frac{p}{2}}\int_{0}^{\infty}\int_{-\infty}^{\infty}\Abs{f\lr{u-Jv}}^{p}\mathrm{d}v\mathrm{d}u\\
		\leq& 2^{-\frac{p}{2}+1}\int_{0}^{\infty}\int_{-\infty}^{\infty}\Abs{f\lr{u+Jv}}^{p}\mathrm{d}v\mathrm{d}u.
	\end{align*}
	
	we immediately get the following results.
	\begin{proposition}
		Let $f\in\mathcal{SM}_{l}\lr{\mathbb{R}_{+}^{n+1}}$. Then for every $I,J\in\mathbb{S}$, $f\in\mathcal{B}_{\text{slice}}^{p}\lr{\Pi_{+,I}}$ if and only if $f\in\mathcal{B}_{\text{slice}}^{p}\lr{\Pi_{+,J}}$.
	\end{proposition}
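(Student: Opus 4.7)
The plan is to deduce the equivalence directly from the Representation Formula (Proposition \ref{prop-RF}), in exact parallel with the analogous proposition for $H_{\text{slice}}^{2}\lr{\Pi_{+,I}}$ proved earlier in the paper. The starting point is the pointwise estimate \eqref{abs-f}, which is itself an immediate consequence of Proposition \ref{prop-RF}:
\[
\abs{f\lr{u+Iv}} \le \tfrac{\sqrt{2}}{2}\bigl(\abs{f\lr{u+Jv}}+\abs{f\lr{u-Jv}}\bigr).
\]

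First I would raise both sides to the $p$-th power using the elementary convexity bound $(a+b)^{p} \le 2^{p-1}(a^{p}+b^{p})$ for $a,b\ge 0$ and $p\ge 1$, which yields a pointwise control of $\abs{f\lr{u+Iv}}^{p}$ by $\abs{f\lr{u+Jv}}^{p}+\abs{f\lr{u-Jv}}^{p}$ up to a constant depending only on $p$. Integrating over $\mathbb{R}_{+}\times\mathbb{R}$ in $du\,dv$ and applying the change of variables $v\mapsto -v$ in the term involving $\abs{f\lr{u-Jv}}^{p}$ (permissible since $\mathbb{R}$ is symmetric under this reflection) collapses the right-hand side into a constant multiple of $\norm{f}_{\mathcal{B}_{\text{slice}}^{p}\lr{\Pi_{+,J}}}^{p}$. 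This establishes one implication. The reverse implication follows by interchanging the roles of $I$ and $J$, a move that is legitimate because both units play fully symmetric roles in Proposition \ref{prop-RF}.

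This is essentially the computation already sketched in the display immediately preceding the statement, so no real obstacle arises. The only bookkeeping to watch is the precise constant produced by the convexity step, which in the display above appears to be handled as if $p=1$; for general $p\ge 1$ the correct constant is $2^{p/2-1}$ in front of the sum of the two integrals, but the qualitative conclusion (finiteness on one slice forces finiteness on every slice) is unaffected. No analogue of the Splitting Lemma, no Fourier-transform machinery, and no boundary-value analysis is needed here; the argument is purely a pointwise estimate followed by Fubini.
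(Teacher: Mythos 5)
Your proposal follows exactly the paper's argument: the pointwise estimate \eqref{abs-f} from the Representation Formula, raised to the $p$-th power via convexity, integrated with Fubini and the reflection $v\mapsto -v$, with the reverse implication by symmetry of $I$ and $J$. Your observation about the constant is correct — the display preceding the proposition writes $2^{-p/2}$ where the convexity bound $(a+b)^p\le 2^{p-1}(a^p+b^p)$ in fact gives $2^{p/2-1}$ for $p\ge 1$ — but, as you say, this is immaterial for the finiteness statement being proved.
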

	Now, we introduce the left slice monogenic Bergman space of $\mathbb{R}_{+}^{n+1}$:
	\begin{definition}
		If $f\in\mathcal{SM}_{l}\lr{\mathbb{R}_{+}^{n+1}}$ and satisfies
		\begin{equation*}
			\norm{f}_{\mathcal{B}_{\text{slice}}^{p}\lr{\mathbb{R}_{+}^{n+1}}}^{p}=\sup_{I\in\mathbb{S}}\norm{f}_{\mathcal{B}_{\text{slice}}^{p}\lr{\Pi_{+,I}}}^{p}<\infty,
		\end{equation*}
		then we say that $f$ belongs to the left slice monogenic Bergman space $\mathcal{B}_{\text{slice}}^{p}\lr{\mathbb{R}_{+}^{n+1}}$.
	\end{definition}
	
	We give some basic inequalities for function in $\mathcal{B}_{\text{slice}}^{p}\lr{\mathbb{R}_{+}^{n+1}}$.
	\begin{proposition}\label{prop-inequality}
		Let $p\in\left[1,\infty\right)$.
		\begin{itemize}
			\item[(i)] There exists a constant $C=C(p)>0$ such that for all $I\in\mathbb{S}$ and $u,v\in\mathbb{R}$ and for all $f\in\mathcal{B}_{\text{slice}}^{p}\lr{\mathbb{R}_{+}^{n+1}}$, the following inequality holds:
			\begin{equation*}
				\abs{f\lr{u+Iv}}\leq Cu^{-4+\frac{2}{p}}\norm{f}_{\mathcal{B}_{\text{slice}}^{p}\lr{\mathbb{R}_{+}^{n+1}}}.
			\end{equation*} 
			\item[(ii)] There exists a constant $C=C(p)>0$ such that for all $I\in\mathbb{S}$ and $u\in\lr{0,\infty}$ and for all $f\in\mathcal{B}_{\text{slice}}^{p}\lr{\mathbb{R}_{+}^{n+1}}$, the following inequality holds:
			\begin{equation*}
				\norm{f_{u}|_{I\mathbb{R}}}_{L^{p}\lr{\mathbb{R}}}\leq Cu^{-4+\frac{3}{p}}\norm{f}_{\mathcal{B}_{\text{slice}}^{p}\lr{\mathbb{R}_{+}^{n+1}}},
			\end{equation*}
			where $f_{u}\lr{Iv}=f\lr{u+Iv}$.
		\end{itemize}
	\end{proposition}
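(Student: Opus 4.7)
The plan is to reduce both estimates to the classical Bergman estimates for holomorphic functions on a half plane by restricting to a slice $L_I$ and invoking the Splitting Lemma (Proposition \ref{prop-SL}). For part (i), fix $I = I_1 \in \mathbb{S}$ and complete it to a basis $I_1,\dots,I_n$ of $\mathbb{R}_n$. Write $f_I(u+Iv) = \sum_{A'} F_{A'}(u+Iv) I_{A'}$ with $F_{A'} : \Pi_{+,I} \to L_I$ holomorphic, so that each $|F_{A'}|^p$ is subharmonic on $\Pi_{+,I}$. For a point $u+Iv \in \Pi_{+,I}$, the disk $D = D(u+Iv, u/2)$ is contained in $\Pi_{+,I}$, and the sub-mean-value inequality yields
\begin{equation*}
\abs{F_{A'}(u+Iv)}^p \le \frac{4}{\pi u^2} \int_{D} \abs{F_{A'}(s+It)}^p \, ds \, dt \le \frac{4}{\pi u^2} \norm{F_{A'}}_{\mathcal{B}^p_{\text{slice}}(\Pi_{+,I})}^p.
\end{equation*}
Using $\abs{f(u+Iv)}^2 = \sum_{A'} \abs{F_{A'}(u+Iv)}^2$ established in \S 2, this translates into a pointwise bound on $\abs{f}$ in terms of $\norm{f}_{\mathcal{B}^p_{\text{slice}}(\Pi_{+,I})}$ of the required scaling. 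Taking the supremum over $I \in \mathbb{S}$ gives the estimate in terms of $\norm{f}_{\mathcal{B}^p_{\text{slice}}(\mathbb{R}^{n+1}_+)}$.

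For part (ii), the same subharmonic mean-value inequality, but integrated in the $v$-variable, is the natural route. Starting from the sub-mean-value bound applied on disks of radius $u/2$,
\begin{equation*}
\abs{F_{A'}(u+Iv)}^p \le \frac{4}{\pi u^2} \int_{u/2}^{3u/2}\int_{v-u/2}^{v+u/2} \abs{F_{A'}(s+It)}^p \, dt \, ds,
\end{equation*}
Fubini's theorem in the $(v,t)$-variables yields
\begin{equation*}
\int_{\mathbb{R}} \abs{F_{A'}(u+Iv)}^p \, dv \le \frac{4}{\pi u} \int_{u/2}^{3u/2} \int_{\mathbb{R}} \abs{F_{A'}(s+It)}^p \, dt \, ds \le \frac{4}{\pi u} \norm{F_{A'}}_{\mathcal{B}^p_{\text{slice}}(\Pi_{+,I})}^p.
\end{equation*}
Summing over $A'$ by the Splitting Lemma identity for $\abs{f}^2$ (and applying the elementary inequality relating $\ell^2$ and $\ell^p$ sums of the components for the exponent $p$) yields the bound on $\norm{f_u|_{I\mathbb{R}}}_{L^p(\mathbb{R})}$, and again taking $\sup_I$ passes to the global Bergman norm.

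The main technical point, and the only step that requires care beyond routine subharmonic estimates, is controlling $\abs{f}$ by the scalar components $F_{A'}$ in $L^p$ norm rather than pointwise. Pointwise one has the clean identity $\abs{f}^2 = \sum \abs{F_{A'}}^2$, but for the $L^p$-type estimate in (ii) one needs to check that the equivalence of the scalar component norms and the full norm $\norm{f}_{\mathcal{B}^p_{\text{slice}}(\Pi_{+,I})}$ is uniform in $I$ and involves only constants depending on $p$ and $n$; this follows from the comparability of $\ell^2$ and $\ell^p$ norms on the finite-dimensional coordinate space indexed by $A' \in \mathcal{P}_<(\{2,\dots,n\})$. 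Once this equivalence is noted, both inequalities become direct applications of subharmonicity on the slice, and the proof reduces to exactly the classical half-plane Bergman pointwise and $L^p$-slice estimates of Bekollé \cite{bekolle2003lecture}.
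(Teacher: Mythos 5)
Your plan — restrict to the slice $L_I$, apply a (sub-)mean-value estimate on the disk $D(u+Iv,u/2)\subset\Pi_{+,I}$, and integrate — is essentially the paper's. The only organizational difference is that you run the estimate on the scalar holomorphic components $F_{A'}$ from the Splitting Lemma and then recombine with a finite-dimensional $\ell^2$--$\ell^p$ comparison, whereas the paper applies the exact mean-value property to $f$ itself (valid slice-wise, since by the Splitting Lemma every Clifford component of $f|_{L_I}$ is harmonic in $(u,v)$) followed by the triangle inequality and H\"older. The paper's variant avoids the norm-comparison step you flag as the ``main technical point,'' so it is a touch simpler, but yours is also sound as a mechanism.

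The gap is the unverified phrase ``of the required scaling.'' If you carry your own computation through, the sub-mean-value bound $\abs{F_{A'}(u+Iv)}^p\le\frac{4}{\pi u^2}\norm{F_{A'}}_{\mathcal{B}^p_{\text{slice}}(\Pi_{+,I})}^p$ gives $\abs{f(u+Iv)}\le Cu^{-2/p}\norm{f}_{\mathcal{B}^p_{\text{slice}}(\mathbb{R}^{n+1}_{+})}$ for (i), and your Fubini step gives $\norm{f_u|_{I\mathbb{R}}}_{L^p(\mathbb{R})}\le Cu^{-1/p}\norm{f}_{\mathcal{B}^p_{\text{slice}}(\mathbb{R}^{n+1}_{+})}$ for (ii). These are the classical half-plane Bergman exponents, and they agree with the stated $u^{-4+2/p}$ and $u^{-4+3/p}$ only when $p=1$. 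The paper's written proof reaches the stated exponents by taking the H\"older constant to be $\lr{\tfrac{4}{\pi u_0^2}}^{1+\frac{1}{q}}$; but since $\abs{D(\mathbf{x}_0,u_0/2)}=\pi u_0^2/4$, the constant H\"older actually produces is $\lr{\tfrac{4}{\pi u_0^2}}^{1-\frac{1}{q}}=\lr{\tfrac{4}{\pi u_0^2}}^{\frac{1}{p}}$, which reproduces your $u^{-2/p}$. So your mechanism is fine, but you must actually compute the exponent rather than assert it; when you do, you will find it does not reproduce the stated power of $u$, a discrepancy in the statement itself that needs to be resolved before the proposition can be cited as written.
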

	
	\begin{proof}
		The proof is similar to that given in \cite{bekolle2003lecture}. For any fixed $I\in\mathbb{S}$. Let $\mathbf{x}_{0}=u_{0}+Iv_{0}\in\Pi_{+,I}$ and let $D\lr{\mathbf{x}_{0},r}$ denote the disc of center $\mathbf{x}_{0}$ and radius $r$. Note that
		\begin{align*}
			\abs{f\lr{u_{0}+Iv_{0}}}=&\Abs{\frac{4}{\pi u_{0}^{2}}\int_{D\lr{u_{0}+Iv_{0},\frac{u_{0}}{2}}}f\lr{u+Iv}\mathrm{d}v\mathrm{d}u}\\
			\leq&\frac{4}{\pi u_{0}^{2}}\int_{D\lr{u_{0}+Iv_{0},\frac{u_{0}}{2}}}\abs{f\lr{u+Iv}}\mathrm{d}v\mathrm{d}u\\
			\leq&\lr{\frac{4}{\pi u_{0}^{2}}}^{1+\frac{1}{q}}\lr{\int_{D\lr{u_{0}+Iv_{0},\frac{u_{0}}{2}}}\abs{f\lr{u+Iv}}^{p}\mathrm{d}v\mathrm{d}u}^{\frac{1}{p}}\\
			\leq&\lr{\frac{4}{\pi u_{0}^{2}}}^{1+\frac{1}{q}}\lr{\int_{\frac{u_{0}}{2}}^{\frac{3u_{0}}{2}}\int_{v_{0}-\frac{u_{0}}{2}}^{v_{0}+\frac{u_{0}}{2}}\abs{f\lr{u+Iv}}^{p}\mathrm{d}v\mathrm{d}u}^{\frac{1}{p}}\\
			\leq&\lr{\frac{4}{\pi u_{0}^{2}}}^{1+\frac{1}{q}}\norm{f}_{\mathcal{B}_{\text{slice}}^{p}\lr{\Pi_{+,I}}}
		\end{align*}
		where $\frac{1}{p}+\frac{1}{q}=1$. Hence, we have 
		\[\abs{f\lr{u+Iv}}\leq C'u^{-4+\frac{2}{p}}\norm{f}_{\mathcal{B}_{\text{slice}}^{p}\lr{\Pi_{+,I}}}.\]
		By \eqref{abs-f}, for all $J\in\mathbb{S}$, we have
		\begin{align*}
			\abs{f\lr{u+Jv}}\leq&\frac{\sqrt{2}}{2}\lr{\abs{f\lr{u+Iv}}+\abs{f\lr{u-Iv}}}\\
			\leq& \frac{\sqrt{2}}{2}C'u^{-4+\frac{2}{p}}\norm{f}_{\mathcal{B}_{\text{slice}}^{p}\lr{\Pi_{+,I}}}\\
			\leq& C''u^{-4+\frac{2}{p}}\norm{f}_{\mathcal{B}_{\text{slice}}^{p}\lr{\mathbb{R}_{+}^{n+1}}}.
		\end{align*}
		Similarly, we get
		\begin{equation*}
			\abs{f\lr{u_{0}+Iw}}^{p}\leq \lr{\frac{4}{\pi u_{0}^{2}}}^{p+\frac{p}{q}}\int_{\frac{u_{0}}{2}}^{\frac{3u_{0}}{2}}\int_{w-\frac{u_{0}}{2}}^{w+\frac{u_{0}}{2}}\abs{f\lr{u+Iv}}^{p}\mathrm{d}v\mathrm{d}u.
		\end{equation*}
		Then, integration with respect to $w$ gives
		\begin{align*}
			\norm{f\lr{u_{0}+I\cdot}}_{L^{p}\lr{\mathbb{R}}}^{p}\leq& \lr{\frac{4}{\pi u_{0}^{2}}}^{p+\frac{p}{q}}\int_{\mathbb{R}}\int_{\frac{u_{0}}{2}}^{\frac{3u_{0}}{2}}\int_{w-\frac{u_{0}}{2}}^{w+\frac{u_{0}}{2}}\abs{f\lr{u+Iv}}^{p}\mathrm{d}v\mathrm{d}u\mathrm{d}w\\
			=&\lr{\frac{4}{\pi u_{0}^{2}}}^{p+\frac{p}{q}}\int_{\frac{u_{0}}{2}}^{\frac{3u_{0}}{2}}\Lr{\int_{\mathbb{R}}\int_{\mathbb{R}}\chi_{\Lr{v-\frac{u_{0}}{2},v+\frac{u_{0}}{2}}}\lr{w}\mathrm{d}w\abs{f\lr{u+Iv}}^{p}\mathrm{d}v}\mathrm{d}u\\
			\leq&\lr{\frac{4}{\pi u_{0}^{2}}}^{p+\frac{p}{q}}u_{0}\int_{\frac{u_{0}}{2}}^{\frac{3u_{0}}{2}}\int_{\mathbb{R}}\abs{f\lr{u+Iv}}^{p}\mathrm{d}v\mathrm{d}u\\
			\leq& \lr{\frac{4}{\pi u_{0}^{2}}}^{p+\frac{p}{q}}u_{0}\norm{f}_{\mathcal{B}_{\text{slice}}^{p}\lr{\Pi_{+,I}}}^{p}
		\end{align*}
		where $\frac{1}{p}+\frac{1}{q}=1$. Thus, we have
		\[\norm{f\lr{u_{0}+I\cdot}}_{L^{p}\lr{\mathbb{R}}}\leq Cu_{0}^{-4+\frac{3}{p}}\norm{f}_{\mathcal{B}_{\text{slice}}^{p}\lr{\Pi_{+,I}}}.\]
		It follows from \eqref{abs-f} that 
		\begin{align}\label{f-Bp-Hp}
			\norm{f\lr{u_{0}+J\cdot}}_{L^{p}\lr{\mathbb{R}}}^{p}\leq&2^{1-\frac{p}{2}}\norm{f\lr{u+I\cdot}}_{L^{p}\lr{\mathbb{R}}}^{p}\notag\\
			\leq& C''u_{0}^{-4p+3}\norm{f}_{\mathcal{B}_{\text{slice}}^{p}\lr{\Pi_{+,I}}}^{p}\notag\\
			\leq&C''u_{0}^{-4p+3}\norm{f}_{\mathcal{B}_{\text{slice}}^{p}\lr{\mathbb{R}_{+}^{n+1}}}^{p}
		\end{align}
		for all $J\in\mathbb{S}$.
	\end{proof}
	By \eqref{f-Bp-Hp}, we get 
	$$\sup_{J\in\mathbb{S}}\norm{f\lr{u+J\cdot}}_{L^{p}\lr{\mathbb{R}}}^{p}\leq C'u^{-4p+3}\norm{f}_{\mathcal{B}_{\text{slice}}^{p}\lr{\mathbb{R}_{+}^{n+1}}}^{p}.$$ 
	Hence, we have that if $f\in\mathcal{B}_{\text{slice}}^{p}\lr{\mathbb{R}_{+}^{n+1}}$, then for $\epsilon>0$, $f\lr{\epsilon+\cdot}\in H_{\text{slice}}^{p}\lr{\mathbb{R}_{+}^{n+1}}$.
	\begin{theorem}\label{PW-B}
		For $1\leq p\leq2$, $\frac{1}{p}+\frac{1}{q}=1$. Let $f\in\mathcal{B}_{\text{slice}}^{p}\lr{\mathbb{R}_{+}^{n+1}}$. Then for every $\mathbf{x}\in\mathbb{R}_{+}^{n+1}$, there exists a function $g_{I_{\mathbf{x}}}\in L^{q}\lr{(-\infty,0),\lr{-w}^{-\frac{q}{p}}\mathrm{d}w}$, such that 
		\begin{equation*}
			f\lr{\mathbf{x}}=\frac{1}{\sqrt{2\pi}}\int_{-\infty}^{0}e^{\mathbf{x}w}g_{I_{\mathbf{x}}}\lr{w}\mathrm{d}w.
		\end{equation*}
		Moreover, for $1<p\leq2$,
		\begin{equation*}
			\Lr{\int_{-\infty}^{0}\lr{\frac{1}{-pw}}^{\frac{q}{p}}\abs{g_{I_{\mathbf{x}}}\lr{w}}^{q}\mathrm{d}w}^{\frac{1}{q}}\leq\norm{f}_{\mathcal{B}_{\text{slice}}^{p}\lr{\mathbb{R}_{+}^{n+1}}}<\infty,
		\end{equation*}
		and for $p=1$,
		\begin{equation*}
			\sup_{w\in\left(-\infty,0\right)}\frac{\abs{g_{I_{\mathbf{x}}}\lr{w}}}{-w}\leq\norm{f}_{\mathcal{B}_{\text{slice}}^{1}\lr{\mathbb{R}_{+}^{n+1}}}<\infty.
		\end{equation*}
	\end{theorem}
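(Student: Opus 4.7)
The plan is to leverage Theorem \ref{PW-Rn+1Rn-Hp-FT} applied to translates of $f$. As observed in the paragraph preceding the theorem, for every $\epsilon > 0$ the translate $f_{\epsilon}(\mathbf{x}) := f(\epsilon + \mathbf{x})$ belongs to $H_{\text{slice}}^{p}(\mathbb{R}^{n+1}_{+})$. Applying Theorem \ref{PW-Rn+1Rn-Hp-FT} to $f_{\epsilon}$ yields, for each $J \in \mathbb{S}$,
\[
f(\epsilon + u + Jv) = \frac{1}{\sqrt{2\pi}} \int_{-\infty}^{0} e^{(u+Jv)w} \mathscr{F}_{J}\bigl(f_{\epsilon}|_{J\mathbb{R}}\bigr)(w) \, \mathrm{d}w.
\]
Setting $g_{J}(w) := e^{-\epsilon w} \mathscr{F}_{J}(f_{\epsilon}|_{J\mathbb{R}})(w)$ and comparing representations at two translation parameters (exactly as in the uniqueness step \eqref{FTfu} in the proof of Lemma \ref{PW-PILI-H2-FT}) shows that $g_{J}$ does not depend on $\epsilon$. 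Taking $J = I_{\mathbf{x}}$ produces the claimed representation of $f(\mathbf{x})$.

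For the weighted bound, I proceed by cases. When $p = 2$, Plancherel gives, for every $\epsilon > 0$, the identity $\int_{-\infty}^{0} e^{2\epsilon w} |g_{J}(w)|^{2} \, \mathrm{d}w = \|f(\epsilon + J\cdot)\|_{L^{2}}^{2}$; integrating in $\epsilon$ and using Fubini with $\int_{0}^{\infty} e^{2\epsilon w} \, \mathrm{d}\epsilon = (-2w)^{-1}$ delivers the stated estimate. When $p = 1$, the Hausdorff-Young inequality (with $q = \infty$) applied to $f_{\epsilon}$ yields the pointwise bound $e^{\epsilon w} |g_{J}(w)| \leq \|f(\epsilon + J\cdot)\|_{L^{1}}$, and integrating in $\epsilon$ with $\int_{0}^{\infty} e^{\epsilon w} \, \mathrm{d}\epsilon = (-w)^{-1}$ gives $|g_{J}(w)|/(-w) \leq \|f\|_{\mathcal{B}_{\text{slice}}^{1}}$. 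For the intermediate range $1 < p < 2$ with $q = p/(p-1)$, I would exploit the gamma-function identity
\[
(-w)^{-q/p} = \frac{q^{q/p}}{\Gamma(q/p)} \int_{0}^{\infty} \epsilon^{q/p - 1} e^{q\epsilon w} \, \mathrm{d}\epsilon
\]
together with Fubini and the Hausdorff-Young estimate $\bigl(\int_{-\infty}^{0} e^{q\epsilon w} |g_{J}|^{q} \, \mathrm{d}w\bigr)^{1/q} \leq \|f(\epsilon + J\cdot)\|_{L^{p}}$ to reduce the problem to bounding $\int_{0}^{\infty} \epsilon^{q/p - 1} \|f(\epsilon + J\cdot)\|_{L^{p}}^{q} \, \mathrm{d}\epsilon$ by $\|f\|_{\mathcal{B}_{\text{slice}}^{p}}^{q}$.

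The main obstacle is precisely this last weighted estimate when $1 < p < 2$. My approach is to first establish the monotonicity of $\epsilon \mapsto \|f(\epsilon + J\cdot)\|_{L^{p}(\mathbb{R})}$, decomposing $f$ via the Splitting Lemma (Proposition \ref{prop-SL}) into holomorphic slice components $F_{A'}$ on $\Pi_{+,J}$ and invoking classical Hardy space monotonicity on each. Granted monotonicity, the inequality $\epsilon \|f(\epsilon + J\cdot)\|_{L^{p}}^{p} \leq \int_{0}^{\epsilon} \|f(s + J\cdot)\|_{L^{p}}^{p} \, \mathrm{d}s \leq \|f\|_{\mathcal{B}_{\text{slice}}^{p}}^{p}$ forces the pointwise decay $\|f(\epsilon + J\cdot)\|_{L^{p}} \leq \epsilon^{-1/p} \|f\|_{\mathcal{B}_{\text{slice}}^{p}}$. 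Writing $\|f(\epsilon + J\cdot)\|_{L^{p}}^{q} = \|f(\epsilon + J\cdot)\|_{L^{p}}^{p} \cdot \|f(\epsilon + J\cdot)\|_{L^{p}}^{q-p}$ and bounding the second factor by this pointwise decay makes the exponents $\epsilon^{q/p - 1}$ and $\epsilon^{-(q-p)/p}$ cancel, whence the remaining integral $\int_{0}^{\infty} \|f(\epsilon + J\cdot)\|_{L^{p}}^{p} \, \mathrm{d}\epsilon$ is bounded by $\|f\|_{\mathcal{B}_{\text{slice}}^{p}}^{p}$, yielding the claimed estimate.
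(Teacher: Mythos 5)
Your overall strategy for the representation formula and for the cases $p=1$ and $p=2$ matches the paper: translate by $\epsilon>0$ to land in $H_{\text{slice}}^{p}$, apply Theorem~\ref{PW-Rn+1Rn-Hp-FT}, check that $g_J$ is independent of $\epsilon$, and then use Hausdorff--Young respectively Plancherel. The paper, however, handles all $1<p\le 2$ in one stroke: from the Hausdorff--Young bound
\[
\Big(\int_{-\infty}^{0}\lvert e^{uw}g_I(w)\rvert^{q}\,\mathrm{d}w\Big)^{p/q}\le \int_{\mathbb{R}}\lvert f(u+Iv)\rvert^{p}\,\mathrm{d}v,
\]
it integrates in $u$ and applies Minkowski's integral inequality with exponent $q/p\ge 1$ to exchange the inner $L^{q/p}_w$-norm with the outer $u$-integral; the inner $u$-integral $\int_{0}^{\infty}e^{puw}\,\mathrm{d}u=\tfrac{1}{-pw}$ then produces exactly the weight $\bigl(\tfrac{1}{-pw}\bigr)^{q/p}$ with constant $1$. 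No monotonicity is needed.

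Your $1<p<2$ argument does not recover the stated inequality. Tracking constants: from
\[
\Big(\frac{1}{-pw}\Big)^{q/p}=\frac{(q/p)^{q/p}}{\Gamma(q/p)}\int_{0}^{\infty}\epsilon^{q/p-1}e^{q\epsilon w}\,\mathrm{d}\epsilon ,
\]
Fubini, Hausdorff--Young, and your decay/monotonicity estimate $\int_{0}^{\infty}\epsilon^{q/p-1}\lVert f(\epsilon+J\cdot)\rVert_{L^{p}}^{q}\,\mathrm{d}\epsilon\le\lVert f\rVert_{\mathcal{B}^{p}_{\text{slice}}}^{q}$ yield
\[
\int_{-\infty}^{0}\Big(\frac{1}{-pw}\Big)^{q/p}\lvert g_{J}(w)\rvert^{q}\,\mathrm{d}w\le\frac{(q/p)^{q/p}}{\Gamma(q/p)}\,\lVert f\rVert_{\mathcal{B}^{p}_{\text{slice}}}^{q}.
\]
For $q/p>1$ (i.e.\ $1<p<2$) one has $(q/p)^{q/p}>\Gamma(q/p)$ (e.g.\ $q/p=2$ gives $4>1$), so your constant $\bigl((q/p)^{q/p}/\Gamma(q/p)\bigr)^{1/q}$ is strictly larger than $1$, whereas the theorem asserts constant $1$. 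Thus you prove only a weaker inequality; the Minkowski route is essential for the stated bound. A secondary, repairable point: invoking ``classical Hardy space monotonicity on each $F_{A'}$'' does not directly control $u\mapsto\lVert f(u+J\cdot)\rVert_{L^{p}}$ for $p\neq 2$, since $\lvert f\rvert^{p}=\bigl(\sum_{A'}\lvert F_{A'}\rvert^{2}\bigr)^{p/2}$ is not a sum of the $\lvert F_{A'}\rvert^{p}$; you should instead argue via the Poisson convolution $f(u'+J\cdot)=P_{u'-u}*f(u+J\cdot)$ and Young's inequality, or via subharmonicity of $\lvert f\rvert^{p}$. But even with the monotonicity fixed, the constant gap remains.
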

	\begin{proof}
		The Proposition \ref{prop-inequality} implies that for $f\in\mathcal{B}_{\text{slice}}^{p}\lr{\mathbb{R}_{+}^{n+1}}$ and $\delta>0$, the function $f_{\delta}=f\lr{\cdot+\delta}$ belongs to $H_{\text{slice}}^{p}\lr{\mathbb{R}_{+}^{n+1}}$. By Theorem \ref{PW-Rn+1Rn-Hp-FT}, for all $I\in\mathbb{S}$ and $u,v\in\mathbb{R}$, we have
		\begin{equation*}
			f_{\delta}\lr{u+Iv}=\frac{1}{\sqrt{2\pi}}\int_{-\infty}^{0}e^{\lr{u+Iv}w}\FT{I}{f_{\delta}|_{I\mathbb{R}}}\lr{w}\mathrm{d}w.
		\end{equation*}
		Using the fact that for all $\delta'>0$,
		\begin{equation*}
			\FT{I}{f_{\delta}|_{I\mathbb{R}}}\lr{w}=e^{-\delta' w}\FT{I}{f_{\delta+\delta'}|_{I\mathbb{R}}}\lr{w}
		\end{equation*}
		holds almost everywhere $w$. Note that
		\begin{equation*}
			e^{-\delta w}\FT{I}{f_{\delta}|_{I\mathbb{R}}}\lr{w}=e^{-\lr{\delta+\delta'}w}\FT{I}{f_{\delta+\delta'}|_{I\mathbb{R}}}\lr{w}
		\end{equation*}
		holds almost everywhere $w$. Thus, if we let $g_{I}\lr{w}=e^{-\delta w}\FT{I}{f_{\delta}|_{I\mathbb{R}}}\lr{w}$, which is independent of $\delta$, then we have   
		\begin{equation*}
			f_{\delta}\lr{u+Iv}=\frac{1}{\sqrt{2\pi}}\int_{-\infty}^{0}e^{\lr{u+Iv+\delta}w}g_{I}\lr{w}\mathrm{d}w.
		\end{equation*}
		Consequently,
		\begin{equation*}
			f\lr{u+Iv}=\frac{1}{\sqrt{2\pi}}\int_{-\infty}^{0}e^{\lr{u+Iv}w}g_{I}\lr{w}\mathrm{d}w.
		\end{equation*}
		Moreover, by Hausdorff-Young's inequality, we have
		\begin{equation*}
			\lr{\int_{-\infty}^{0}\abs{e^{uw}g_{I}\lr{w}}^{q}\mathrm{d}w}^{\frac{p}{q}}\leq\int_{\mathbb{R}}\abs{f\lr{u+Iv}}^{p}\mathrm{d}v
		\end{equation*}
		for $1<p\leq2$, and
		\begin{equation*}
			\sup_{w\in\left(-\infty,0\right]}\abs{e^{uw}g_{I}\lr{w}}\leq\int_{\mathbb{R}}\abs{f\lr{u+Iv}}\mathrm{d}v
		\end{equation*}
		for $p=1$.
		
		For $1<p\leq2$, Minkowski's inequality implies that
		\begin{align*}
			&\int_{0}^{\infty}\int_{\mathbb{R}}\abs{f\lr{u+Iv}}^{p}\mathrm{d}v\mathrm{d}u\\
			\geq&\int_{0}^{\infty}\Lr{\int_{-\infty}^{0}\lr{\abs{e^{uw}g_{I}\lr{w}}^{p}}^{\frac{q}{p}}\mathrm{d}w}^{\frac{p}{q}}\mathrm{d}u\\
			\geq&\Lr{\int_{-\infty}^{0}\lr{\int_{0}^{\infty}\abs{e^{uw}g_{I}\lr{w}}^{p}\mathrm{d}u}^{\frac{q}{p}}\mathrm{d}w}^{\frac{p}{q}}\\
			=&\Lr{\int_{-\infty}^{0}\lr{\frac{1}{-pw}}^{\frac{q}{p}}\abs{g_{I}\lr{w}}^{q}\mathrm{d}w}^{\frac{p}{q}}
		\end{align*}
		and thus
		\begin{align*}
			&\Lr{\int_{-\infty}^{0}\lr{\frac{1}{-pw}}^{\frac{q}{p}}\abs{g_{I}\lr{w}}^{q}\mathrm{d}w}^{\frac{1}{q}}\\
			\leq&\lr{\int_{0}^{\infty}\int_{\mathbb{R}}\abs{f\lr{u+Iv}}^{p}\mathrm{d}v\mathrm{d}u}^{\frac{1}{p}}\\
			\leq&\norm{f}_{\mathcal{B}_{\text{slice}}^{p}\lr{\mathbb{R}_{+}^{n+1}}}<\infty.
		\end{align*}
		
		For $p=1$,
		\begin{equation*}
			\sup_{w\in\left(-\infty,0\right]}\frac{\abs{g_{I}\lr{w}}}{-w}\leq\int_{0}^{\infty}\int_{\mathbb{R}}\abs{f\lr{u+Iv}}\mathrm{d}v\mathrm{d}u\leq\norm{f}_{\mathcal{B}_{\text{slice}}^{1}\lr{\mathbb{R}_{+}^{n+1}}}<\infty.
		\end{equation*}
	\end{proof}
	In particular, when $p=2$, the converse of Theorem \ref{PW-B} holds.
	\begin{theorem}
		For every $\mathbf{x}\in\mathbb{R}_{+}^{n+1}$, let $g_{I_{\mathbf{x}}}\in L^{2}\lr{\lr{-\infty,0},\lr{-w}^{-1}\mathrm{d}w}$. Assume
		\begin{equation*}
			f\lr{\mathbf{x}}=\frac{1}{\sqrt{2\pi}}\int_{-\infty}^{0}e^{\mathbf{x}w}g_{I_{\mathbf{x}}}\lr{w}\mathrm{d}w.
		\end{equation*}
		Then $f\in\mathcal{B}_{\text{slice}}^{2}\lr{\mathbb{R}_{+}^{n+1}}$ satisfies that 
		\begin{equation*}
			\norm{f}_{\mathcal{B}_{\text{slice}}^{2}\lr{\mathbb{R}_{+}^{n+1}}}^{2}\leq\frac{1}{2}\int_{-\infty}^{0}\abs{g_{I_{\mathbf{x}}}\lr{w}}^{2}\frac{\mathrm{d}w}{-w}.
		\end{equation*}
	\end{theorem}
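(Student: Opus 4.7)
The plan is to reduce the entire claim to a slice-wise Plancherel computation followed by an application of Tonelli's theorem. Fix $I \in \mathbb{S}$. For each $u > 0$, the representation
\[
f(u+Iv) = \frac{1}{\sqrt{2\pi}}\int_{-\infty}^{0} e^{uw}\, e^{Ivw}\, g_{I}(w)\, \mathrm{d}w
\]
exhibits the map $v \mapsto f(u+Iv)$ as the inverse 1DLCFT of the function $G_{u}(w) := \chi_{(-\infty,0)}(w)\, e^{uw}\, g_{I}(w)$. A preliminary check shows $G_{u} \in L^{2}(\mathbb{R})$: since $(-w)e^{2uw} \leq \frac{1}{2eu}$ on $(-\infty,0)$, one has $|G_{u}(w)|^{2} \leq \frac{1}{2eu}\, \frac{|g_{I}(w)|^{2}}{-w}$, which is in $L^{1}(\mathbb{R})$ by hypothesis. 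Plancherel's theorem then yields
\[
\int_{\mathbb{R}} |f(u+Iv)|^{2}\, \mathrm{d}v = \int_{-\infty}^{0} e^{2uw}\, |g_{I}(w)|^{2}\, \mathrm{d}w.
\]

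Next I integrate in $u \in (0,\infty)$ and interchange the order of integration via Tonelli (nonnegative integrand):
\[
\|f\|_{\mathcal{B}^{2}_{\text{slice}}(\Pi_{+,I})}^{2} = \int_{-\infty}^{0} |g_{I}(w)|^{2} \int_{0}^{\infty} e^{2uw}\, \mathrm{d}u\, \mathrm{d}w = \frac{1}{2}\int_{-\infty}^{0} \frac{|g_{I}(w)|^{2}}{-w}\, \mathrm{d}w.
\]
Taking the supremum over $I \in \mathbb{S}$ gives the asserted bound on $\|f\|_{\mathcal{B}^{2}_{\text{slice}}(\mathbb{R}^{n+1}_{+})}^{2}$, and in particular the finiteness of the norm.

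It remains to verify that $f \in \mathcal{SM}_{l}(\mathbb{R}^{n+1}_{+})$ so that $f$ indeed belongs to $\mathcal{B}^{2}_{\text{slice}}(\mathbb{R}^{n+1}_{+})$. On each slice $\Pi_{+,I}$, holomorphicity of $f_{I}$ follows from differentiation under the integral, which is legitimate because the Cauchy-Schwarz argument above provides a locally integrable majorant uniform in $u + Iv$ on compact subsets of $\Pi_{+,I}$. To promote slice-wise holomorphicity to slice monogenicity on $\mathbb{R}^{n+1}_{+}$, I would invoke Proposition \ref{prop-ext}: picking one reference $I_0 \in \mathbb{S}$ and setting $\tilde f = \mathbf{ext}(f_{I_0})$ on $\mathbb{R}^{n+1}_{+}$ produces a slice monogenic function whose restriction to $L_{I_0}$ equals $f_{I_0}$, and the representation formula of Proposition \ref{prop-RF} forces $\tilde f = f$ everywhere (this is consistent with the hypothesis that a single function $f$ is defined by the integral).

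I expect the main obstacle to be this last slice-monogenicity verification: the statement lists a family $\{g_{I_{\mathbf{x}}}\}$ indexed by slices, and one must argue that the unique slice monogenic extension from a single reference slice coincides with the pointwise-defined $f$ across all slices. The Plancherel/Tonelli computation itself is routine once the $L^{2}$-integrability of $G_{u}$ is established, and the only nontrivial analytic input is the elementary bound $(-w)e^{2uw} \le \frac{1}{2eu}$ that unlocks the weighted-$L^{2}$ hypothesis.
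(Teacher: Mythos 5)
Your Plancherel--Tonelli computation is exactly the paper's argument, and your preliminary bound $(-w)e^{2uw}\le\frac{1}{2eu}$ on $(-\infty,0)$, giving $G_u\in L^2(\mathbb R)$, is a useful technical check that the paper elides (the paper's Cauchy--Schwarz estimate directly shows only $L^1$-integrability of $e^{(u+Iv)w}g_I(w)$ in $w$, though of course $|G_u|^2=\bigl(e^{2uw}(-w)\bigr)\cdot\frac{|g_I|^2}{-w}$ is just as quick to control).

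Your final paragraph, however, contains a circular sub-argument. You propose to establish slice monogenicity of $f$ by forming $\tilde f=\ext(f_{I_0})$ and then arguing that ``the representation formula of Proposition \ref{prop-RF} forces $\tilde f=f$''; but Proposition \ref{prop-RF} applies only to functions already known to be slice monogenic, which is precisely what you are trying to prove for $f$. Fortunately the whole detour is unnecessary: left slice monogenicity is \emph{defined} by the requirement that $\overline{\partial_I}f_I=0$ on $U\cap L_I$ for every $I\in\mathbb{S}$, and your Cauchy--Schwarz domination already justifies differentiating the integral under the integral sign to verify this on each slice. That is exactly how the paper concludes ``$f$ is left slice monogenic in $\mathbb{R}^{n+1}_+$''; no extension lemma is invoked. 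With that paragraph replaced by the observation that slice-wise holomorphicity is the definition, your proof matches the paper's.
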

	\begin{proof}
		For every $I\in\mathbb{S}$, $u,v\in\mathbb{R}$ and $u>0$, by the Schwarz inequality, we get
		\begin{align*}
			\int_{-\infty}^{0}\Abs{e^{\lr{u+Iv}w}g_{I}\lr{w}}\mathrm{d}w&=\int_{-\infty}^{0}\Abs{e^{uw}\lr{-w}^{\frac{1}{2}}}\Abs{\lr{-w}^{-\frac{1}{2}}g_{I}\lr{w}}\mathrm{d}w\\
			&\leq\lr{\int_{-\infty}^{0}\Abs{e^{2uw}\lr{-w}}\mathrm{d}w}^{\frac{1}{2}}\lr{\int_{-\infty}^{0}\Abs{g_{I}\lr{w}}^{2}\lr{-w}^{-1}\mathrm{d}w}^{\frac{1}{2}}\\
			&<\infty.
		\end{align*}
		It implies that $f\lr{u+Iv}$ is holomorphic on $\Pi_{+,I}$ for each $I$. Hence, $f$ is left slice monogenic in $\mathbb{R}_{+}^{n+1}$.
		
		Next, using Plancherel's theorem, we obtain
		\begin{equation*}
			\int_{\mathbb{R}}\Abs{f\lr{u+Iv}}^{2}\mathrm{d}v=\int_{-\infty}^{0}e^{2uw}\Abs{g_{I}\lr{w}}^{2}\mathrm{d}w.
		\end{equation*}
		It follows that for every $J\in\mathbb{S}$
		\begin{align*}
			&\int_{0}^{\infty}\lr{\int_{\mathbb{R}}\abs{f\lr{u+Jv}}^{2}\mathrm{d}v}\mathrm{d}u\\
			\leq&\int_{0}^{\infty}\lr{\int_{\mathbb{R}}\abs{f\lr{u+Iv}}^{2}\mathrm{d}v}\mathrm{d}u\\
			=&\int_{-\infty}^{0}\lr{\int_{0}^{\infty}e^{2uw}\mathrm{d}u}\abs{g_{I}\lr{w}}^{2}\mathrm{d}w\\
			=&\frac{1}{2}\int_{-\infty}^{0}\abs{g_{I}\lr{w}}^{2}\lr{-w}^{-1}\mathrm{d}w.
		\end{align*}
		So, we have
		$$\sup_{J\in\mathbb{S}}\int_{0}^{\infty}\lr{\int_{\mathbb{R}}\abs{f\lr{u+Jv}}^{2}\mathrm{d}v}\mathrm{d}u<\infty.$$
		This implies that $f\in\mathcal{B}_{\text{slice}}^{2}\lr{\mathbb{R}_{+}^{n+1}}$.
	\end{proof}
	
	\bigskip
	%\subsection*{Author contributions.} All authors equally contributed to the elaboration of the paper. All authors read and approved the final manuscript.
	
	\subsection*{Acknowledgment. }W. X. Mai was supported by 
	the Science and Technology Development Fund, Macau SAR (No. 0133/2022/A, 0022/2023/ITP1).
	P. Dang was supported by the Science and Technology Development Fund, Macau SAR (No. 0030/2023/ITP1, 0067/2024/RIA1).

	\subsection*{Data availability statement. }
	Data sharing is not applicable to this article as no new data were created or analyzed in this study.
	
	\subsection*{Conflict of interest}
	The authors declared that they have no competing interest regarding this research work.

\end{document}